\definecolor{linkred}{rgb}{0.7,0.2,0.2}
\definecolor{linkblue}{rgb}{0,0.2,0.6}
\numberwithin{figure}{section}
\DeclareFontFamily{OMS}{rsfs}{\skewchar\font'60}
\DeclareFontShape{OMS}{rsfs}{m}{n}{<-5>rsfs5 <5-7>rsfs7 <7->rsfs10 }{}
\DeclareSymbolFont{rsfs}{OMS}{rsfs}{m}{n}
\DeclareSymbolFontAlphabet{\scr}{rsfs}
\DeclareSymbolFontAlphabet{\scr}{rsfs}
\DeclareFontFamily{U}{mathx}{\hyphenchar\font45}
\DeclareFontShape{U}{mathx}{m}{n}{
      <5> <6> <7> <8> <9> <10>
      <10.95> <12> <14.4> <17.28> <20.74> <24.88>
      mathx10
      }{}
\DeclareSymbolFont{mathx}{U}{mathx}{m}{n}
\DeclareMathAccent{\wcheck}{0}{mathx}{"71}
\DeclareMathOperator{\Aut}{Aut}
\DeclareMathOperator{\codim}{codim}
\DeclareMathOperator{\Ext}{Ext}
\DeclareMathOperator{\Id}{Id}
\DeclareMathOperator{\Image}{Image}
\DeclareMathOperator{\img}{img}
\DeclareMathOperator{\Pic}{Pic}
\DeclareMathOperator{\rank}{rank}
\DeclareMathOperator{\reg}{reg}
\DeclareMathOperator{\sEnd}{\sE\negthinspace \mathit{nd}}
\DeclareMathOperator{\sing}{sing}
\DeclareMathOperator{\Spec}{Spec}
\DeclareMathOperator{\Sym}{Sym}
\DeclareMathOperator{\tor}{tor}
\newcommand{\sA}{\scr{A}}
\newcommand{\sB}{\scr{B}}
\newcommand{\sC}{\scr{C}}
\newcommand{\sD}{\scr{D}}
\newcommand{\sE}{\scr{E}}
\newcommand{\sF}{\scr{F}}
\newcommand{\sG}{\scr{G}}
\newcommand{\sH}{\scr{H}}
\newcommand{\sHom}{\scr{H}\negthinspace om}
\newcommand{\sJ}{\scr{J}}
\newcommand{\sL}{\scr{L}}
\newcommand{\sN}{\scr{N}}
\newcommand{\sO}{\scr{O}}
\newcommand{\sQ}{\scr{Q}}
\newcommand{\sS}{\scr{S}}
\newcommand{\sT}{\scr{T}}
\newcommand{\sW}{\scr{W}}
\newcommand{\bA}{\mathbb{A}}
\newcommand{\bB}{\mathbb{B}}
\newcommand{\bC}{\mathbb{C}}
\newcommand{\bD}{\mathbb{D}}
\newcommand{\bE}{\mathbb{E}}
\newcommand{\bF}{\mathbb{F}}
\newcommand{\bG}{\mathbb{G}}
\newcommand{\bN}{\mathbb{N}}
\newcommand{\bP}{\mathbb{P}}
\newcommand{\bQ}{\mathbb{Q}}
\newcommand{\bR}{\mathbb{R}}
\newcommand{\bX}{\mathbb{X}}
\newcommand{\bZ}{\mathbb{Z}}
\theoremstyle{plain}
\newtheorem{thm}{Theorem}[section]
\newtheorem{cor}[thm]{Corollary}
\newtheorem{defn}[thm]{Definition}
\newtheorem{fact}[thm]{Fact}
\newtheorem{lem}[thm]{Lemma}
\newtheorem{problem}[thm]{Problem}
\newtheorem{prop}[thm]{Proposition}
\theoremstyle{remark}
\newtheorem{c-n-d}[thm]{Claim and Definition}
\newtheorem{construction}[thm]{Construction}
\newtheorem{example}[thm]{Example}
\newtheorem{notation}[thm]{Notation}
\newtheorem{rem}[thm]{Remark}
\newtheorem*{rem-nonumber}{Remark}
\numberwithin{equation}{thm}
\setlist[enumerate]{label=(\thethm.\arabic*), before={\setcounter{enumi}{\value{equation}}}, after={\setcounter{equation}{\value{enumi}}}}
\newcommand{\into}{\hookrightarrow}
\newcommand{\wtilde}{\widetilde}
\newcommand{\what}{\widehat}
\newcommand\CounterStep{\addtocounter{thm}{1}\setcounter{equation}{0}}
\newcommand{\factor}[2]{\left. \raise 2pt\hbox{$#1$} \right/\hskip -2pt\raise -2pt\hbox{$#2$}}
\newcommand{\Publication}[1]{}
\newcommand{\subversionInfo}{}
\newcommand{\svnid}[1]{}
\newcommand{\approvals}[2][Approval]{}
\author{Daniel Greb} %
\address{Daniel Greb, Essener Seminar für Algebraische Geometrie und Arithmetik, Fakultät für Mathe\-ma\-tik, Universität Duisburg--Essen, 45117 Essen, Germany}
\email{\href{mailto:daniel.greb@uni-due.de}{daniel.greb@uni-due.de}}
\urladdr{\href{https://www.esaga.uni-due.de/daniel.greb/}{https://www.esaga.uni-due.de/daniel.greb}}
\author{Stefan Kebekus} %
\address{Stefan Kebekus, Mathematisches Institut, Albert-Ludwigs-Universität Freiburg, Ernst-Zermelo-Straße 1, 79104 Freiburg im Breisgau, Germany \&
  Freiburg Institute for Advanced Studies (FRIAS), Freiburg im Breisgau, Germany}
\email{\href{mailto:stefan.kebekus@math.uni-freiburg.de}{stefan.kebekus@math.uni-freiburg.de}}
\urladdr{\href{https://cplx.vm.uni-freiburg.de}{https://cplx.vm.uni-freiburg.de}}
\author{Thomas Peternell} %
\address{Thomas Peternell, Mathematisches Institut, Universität
  Bayreuth, 95440~Bayreuth, Germany}
\email{\href{mailto:thomas.peternell@uni-bayreuth.de}{thomas.peternell@uni-bayreuth.de}}
\urladdr{\href{http://www.komplexe-analysis.uni-bayreuth.de}{http://www.komplexe-analysis.uni-bayreuth.de}}
\keywords{Bogomolov-Gieseker inequality, Fano variety, KLT Singularities, Miyaoka-Yau inequality, stability, projective flatness, uniformisation}
\subjclass[2020]{32Q30, 32Q26, 14E20, 14E30, 53B10}
\thanks{Stefan Kebekus gratefully acknowledges partial support through a
  fellowship of the Freiburg Institute of Advanced Studies (FRIAS)}
\title[Projective flatness over klt spaces and uniformisation]{Projective flatness over klt spaces and uniformisation of varieties with nef anti-canonical divisor}
\date{\today}
\DeclareMathOperator{\alb}{alb}
\DeclareMathOperator{\Alb}{Alb}
\DeclareMathOperator{\End}{End}
\DeclareMathOperator{\diff}{d}
\DeclareMathOperator{\Div}{Div}
\DeclareMathOperator{\GL}{GL}
\DeclareMathOperator{\PGL}{ℙ\negthinspace\GL}
\DeclareMathOperator{\Proj}{Proj}
\theoremstyle{remark}
\begin{document}

\begin{abstract}
We give a criterion for the projectivisation of a reflexive sheaf on a klt space
to be induced by a projective representation of the fundamental group of the
smooth locus.  This criterion is then applied to give a characterisation of
finite quotients of projective spaces and Abelian varieties by $ℚ$-Chern class
(in)equalities and a suitable stability condition.  This stability condition is
formulated in terms of a naturally defined extension of the tangent sheaf by the
structure sheaf.  We further examine cases in which this stability condition is
satisfied, comparing it to K-semistability and related notions.
\end{abstract}
\approvals[Approval for abstract]{Daniel & yes \\Stefan & yes\\ Thomas & yes}

\maketitle
\tableofcontents

%
%
\svnid{$Id: 01-intro.tex 747 2021-04-01 07:12:38Z kebekus $}

\section{Introduction}
\subversionInfo

\subsection{Stability, the Miyaoka-Yau Inequality and quasi-étale uniformisation}
\approvals{Daniel & yes \\Stefan & yes\\ Thomas & yes}

Let $X$ be a \emph{$ℚ$-Fano $n$-fold}; that is, let $X$ be a normal, projective,
$n$-dimensional variety with at worst klt singularities such that $-K_X$ is
$ℚ$-ample.  Generalising a classical result, it has been shown in
\cite[Thm.~6.1]{GKPT19b} that if the tangent sheaf $𝒯_X$ is stable with respect
to the anticanonical polarisation $-K_X$, then its first two $ℚ$-Chern classes,
which are well-defined for all spaces with klt singularities, satisfy the
\emph{$ℚ$-Bogomolov-Gieseker Inequality},

\begin{align}
  \label{eq:BG} \frac{n-1}{2n}·\what{c}_1\left( Ω^{[1]}_X \right)² · [-K_X]^{n-2} & ≤ \what{c}_2\left( Ω^{[1]}_X \right) · [-K_X]^{n-2}.  \\
  \intertext{As part of the present investigation, we will generalise
  \eqref{eq:BG} to the case where $𝒯_X$ is semistable with respect to the
  anticanonical polarisation $-K_X$, see Section~\ref{sec:potf1} below.  In
  analogy to the case of manifolds with ample canonical bundle and as a
  generalisation of a classical result on Kähler-Einstein Fano manifolds,
  \cite{CH75}, one expects more, namely a \emph{$ℚ$-Miyaoka-Yau Inequality} of
  the form}
  \label{eq:MY} \frac{n}{2(n+1)}·\what{c}_1\left( Ω^{[1]}_X \right)² · [-K_X]^{n-2} & ≤ \what{c}_2\left( Ω^{[1]}_X \right) · [-K_X]^{n-2}.
\end{align}

\subsubsection*{The canonical extension}
\approvals{Daniel & yes \\Stefan & yes\\ Thomas & yes}

Section~\ref{ssec:nosta} shows by way of classical examples that without an
additional stability assumption \eqref{eq:MY} will not hold.  This paper
discusses an algebro-geometric (semi)stability notion that is stronger than
(semi)stability of $𝒯_X$ and guarantees that the $ℚ$-Miyaoka-Yau
Inequality~\eqref{eq:MY} holds: (semi)stability of the \emph{canonical
  extension}.  The canonical extension is a reflexive sheaf $ℰ_X$ on $X$ that
appears in the middle of the short exact sequence
\[
  0 → 𝒪_X → ℰ_X → 𝒯_X → 0
\]
whose extension class is given by the first Chern class of the $ℚ$-Cartier
divisor $-K_X$.  For the first Chern classes of line bundles over manifolds the
construction is classical, see \cite{Ati57}, and generalisations of it have
appeared in many other problems of Kähler geometry, see for instance
\cite{MR1163733, MR1165352, MR1959581, GrebWong}.  Section~\ref{ssec:canExt}
discusses the construction of the canonical extension in the singular case.
Note that the $ℚ$-Miyaoka-Yau Inequality~\eqref{eq:MY} is in fact nothing but
the $ℚ$-Bogomolov-Gieseker inequality for the canonical extension sheaf $ℰ_X$.

\subsection{Main results}
\approvals{Daniel & yes \\Stefan & yes\\ Thomas & yes}

Using the canonical extension, we may now formulate the main results.  Our point
of departure is the following.

\begin{prop}[Miyaoka-Yau Inequality and Semistable Canonical Extension]\label{prop:Fano1}
  Let $X$ be an $n$-dimensional, projective klt space.  If there exists an ample
  Cartier divisor $H$ on $X$ such that the canonical extension $ℰ_X$ is
  semistable with respect to $H$, then
  \begin{equation}\label{eq:MY_arbitrary_H}
    \frac{n}{2(n+1)}·\what{c}_1\left( Ω^{[1]}_X \right)²·[H]^{n-2} ≤ \what{c}_2\left( Ω^{[1]}_X \right)·[H]^{n-2}.
  \end{equation}
\end{prop}

Definition~\vref{defn:kltspace} recalls the notion of a klt space.  We prove
Proposition~\ref{prop:Fano1} in Section~\ref{sec:potf1}.
Section~\ref{ssec:soce} discusses criteria to guarantee stability of the
canonical extension $ℰ_X$.

\begin{rem}[Kähler-Einstein manifolds]\label{rem:smoothKE}
  If $(X, ω)$ is a Kähler-Einstein Fano manifold, Tian has shown in
  \cite[Thm.~2.1]{MR1163733} that the bundle $ℰ_X$ admits a Hermitian-Yang-Mills
  metric with respect to $ω$, and is therefore $ω$-semistable.  The Miyaoka-Yau
  Inequality~\eqref{eq:MY} then holds by \cite[Thm.~IV.4.7]{Kob87}, with
  equality if and only if the metric has constant holomorphic sectional
  curvature and $X ≅ ℙ^n$, see \cite[Thm.~IV.4.16]{Kob87} or
  \cite[Thm.~2.3]{MR1603624}.  The next result generalises this to klt varieties
  with nef anticanonical classes.
\end{rem}

\begin{thm}[Quasi-étale uniformisation if $-K_X$ is nef]\label{thm:Fano2}
  Let $X$ be a projective variety.  Assume that $X$ has at most klt
  singularities\footnote{equivalently: Assume that the pair $(X,0)$ is klt.} and
  that its anti-canonical class $-K_X$ is nef.  Then, the following statements
  are equivalent.
  \begin{enumerate}
  \item\label{il:s1} There exists an ample Cartier divisor $H$ on $X$ such that
    the canonical extension $ℰ_X$ is semistable with respect to $H$ and such
    that equality holds in \eqref{eq:MY_arbitrary_H}:
    \[
      \frac{n}{2(n+1)}·\what{c}_1\left( Ω^{[1]}_X \right)²·[H]^{n-2} =
      \what{c}_2\left( Ω^{[1]}_X \right)·[H]^{n-2}.
    \]

  \item\label{il:s2} The variety $X$ is a quotient of the projective space or of
    an Abelian variety by the action of a finite group of automorphisms that
    acts without fixed points in codimension one.
  \end{enumerate}
\end{thm}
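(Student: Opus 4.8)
The plan is to prove the two implications separately, with \ref{il:s2} $\Rightarrow$ \ref{il:s1} being the elementary direction and \ref{il:s1} $\Rightarrow$ \ref{il:s2} carrying the real content. For \ref{il:s2} $\Rightarrow$ \ref{il:s1}, write $\gamma\colon Y \to X$ for the quasi-étale quotient map, with $Y = \bP^n$ or $Y$ an Abelian variety. I would first treat the two model cases directly. On $\bP^n$ the Euler sequence exhibits the canonical extension as $\sE_{\bP^n} \cong \sO(1)^{\oplus(n+1)}$, since scaling the extension class by the nonzero factor coming from $-K_{\bP^n}=(n+1)H$ does not change the isomorphism type of the middle term; this bundle is polystable with respect to every polarisation and saturates the Bogomolov–Gieseker relation. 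On an Abelian variety $-K_Y \equiv 0$, so the extension splits and $\sE_Y \cong \sO_Y^{\oplus(n+1)}$ is trivial, hence polystable with vanishing discriminant. Choosing any ample $H$ on $X$, one has $\gamma^{[*]}\sE_X \cong \sE_Y$ (the canonical extension is compatible with reflexive pullback along a quasi-étale map, as $\gamma^*(-K_X)=-K_Y$ and ramification sits in codimension $\geq 2$), so polystability of $\sE_Y$ with respect to $\gamma^*H$ forces semistability of $\sE_X$ with respect to $H$, and the numerical equality descends because $\widehat{c}_i$ multiply by $\deg\gamma$.

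For the substantial implication \ref{il:s1} $\Rightarrow$ \ref{il:s2}, the first step is to reinterpret the extremality hypothesis as projective flatness of $\sE_X$. Using the defining sequence $0 \to \sO_X \to \sE_X \to \sT_X \to 0$ one computes $\widehat{c}_1(\sE_X) = -K_X$ and $\widehat{c}_2(\sE_X) = \widehat{c}_2\bigl(\Omega^{[1]}_X\bigr)$, so that the rank-$(n+1)$ discriminant satisfies
\[
  \Delta(\sE_X)\cdot[H]^{n-2} = \Bigl(2(n+1)\,\widehat{c}_2(\sE_X) - n\,\widehat{c}_1(\sE_X)^2\Bigr)\cdot[H]^{n-2} = 2(n+1)\,\widehat{c}_2\bigl(\Omega^{[1]}_X\bigr)\cdot[H]^{n-2} - n\,\widehat{c}_1\bigl(\Omega^{[1]}_X\bigr)^2\cdot[H]^{n-2}.
\]
Thus equality in \eqref{eq:MY_arbitrary_H} is precisely the statement $\Delta(\sE_X)\cdot[H]^{n-2}=0$. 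Combined with the assumed semistability of $\sE_X$, I would now invoke the paper's central criterion (the projective-flatness result advertised in the abstract): a semistable reflexive sheaf on a klt space that saturates the Bogomolov–Gieseker inequality has projectivisation $\bP(\sE_X)$ induced by a representation $\rho\colon \pi_1(X_{\reg}) \to \PGL_{n+1}(\bC)$; in particular $\sE_X|_{X_{\reg}}$ is projectively (unitary-)flat.

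The remaining step is the uniformisation. The sub-line-bundle $\sO_X \hookrightarrow \sE_X$ furnishes a holomorphic section of $\bP(\sE_X)$, and trivialising the flat bundle on the universal cover of $X_{\reg}$ turns this section into a $\rho$-equivariant developing map $\mathrm{dev}\colon \widetilde{X_{\reg}} \to \bP^n$. The derivative of $\mathrm{dev}$ is measured by the second fundamental form of $\sO_X \subset \sE_X$, which by construction of the canonical extension is an isomorphism onto $\sT_X$; hence $\mathrm{dev}$ is an immersion of maximal rank and, comparing dimensions, a local biholomorphism. Nefness of $-K_X$ rules out the negatively curved (ball-quotient) regime, leaving two cases according to whether the image is all of $\bP^n$ or an affine chart $\bC^n$. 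I would then argue that $\mathrm{dev}$ is a covering onto its image, extend the flat structure across the singular locus (of codimension $\geq 2$ in the klt space $X$, so that $\pi_1(X_{\reg})$ governs the global geometry), and identify the resulting cover $Y$ with $\bP^n$, respectively with an Abelian variety, taking $G=\img(\rho)$ as the deck/quotient group acting freely in codimension one. This yields the presentation $X = Y/G$ of \ref{il:s2}.

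The main obstacle is this last uniformisation step. Three points are delicate: (i) extending the flat projective structure and the developing map across $X_{\mathrm{sing}}$ and across the locus where $\sE_X$ fails to be a bundle; (ii) upgrading the local biholomorphism $\mathrm{dev}$ to a genuine covering map, i.e.\ a completeness statement for the projective structure; and (iii) separating the $\bP^n$ and Abelian cases and showing that in the numerically trivial case the combination of nefness of $-K_X$ with the klt hypothesis — via the structure theory for varieties with nef anticanonical class and a Beauville–Bogomolov-type splitting — forces an honest Abelian variety rather than a non-compact or non-algebraic quotient. Everything earlier is either the cited criterion or routine Chern-class bookkeeping; the geometric passage from a flat $\PGL_{n+1}(\bC)$-structure to an explicit global quotient is where the work concentrates.
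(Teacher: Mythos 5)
Your reduction of the extremality hypothesis to $\what{Δ}(\sE_X)·[H]^{n-2}=0$ and the appeal to the projective-flatness criterion match the paper's opening moves, with one omission: Theorem~\ref{thm:pflatnessCriterion} requires $X$ to be \emph{maximally quasi-étale}, so the paper first replaces $X$ by a Galois maximally quasi-étale cover (which exists by \cite[Thm.~1.5]{GKP13} and reproduces all hypotheses by exactly the quasi-étale functoriality you use in the easy direction). Your direction \ref{il:s2}$\,⇒\,$\ref{il:s1} is essentially the paper's.

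The genuine gap is in the uniformisation step, where your developing-map strategy rests on a claim that is false as stated and never engages the nefness of $-K_X$. The second fundamental form of $\sO_X ⊂ \sE_X$ is computed with respect to a chosen flat connection, and nothing in the construction of the canonical extension makes it an isomorphism for the connection produced by the projective-flatness criterion: when $X$ is an Abelian variety the extension splits, $\sE_X ≅ \sO_X^{⊕ n+1}$ carries the trivial flat structure, the second fundamental form of the summand $\sO_X$ vanishes identically, and your map $\mathrm{dev}$ is constant. So the immersivity of $\mathrm{dev}$, on which the rest of your argument rests, fails in one of the two cases you must produce; projective flatness of $\sE_X$ does not by itself yield a flat projective \emph{structure} on $X$ (that would require the transversality you assert), and recovering one would amount to reproving the Kobayashi--Ochiai/Jahnke--Radloff classification. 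The paper avoids all of (i)--(iii) on your list by a different route: after the maximally quasi-étale reduction, Corollary~\ref{cor:3-9} shows $\sEnd(\sE_X)$ is locally free, hence so is $\sT_X$ (a local direct summand of the locally splittable extension $\sE_X$), and the Lipman--Zariski theorem for klt spaces shows $X$ is \emph{smooth}; Nakayama's theorem makes the $ℚ$-twist $\sE_X\bigl\langle \frac{1}{n+1}·[K_X]\bigr\rangle$ nef, and nefness of $-K_X$ then gives nefness of $\sT_X$; the Demailly--Peternell--Schneider structure theory of the Albanese map separates the Abelian case from the Fano case, where $\sT_X$ restricted to curves is ample, $X$ is simply connected, Proposition~\ref{prop:3-3x} gives $\sE_X ≅ \sO_X(D)^{⊕ n+1}$ with $(n+1)·D = -K_X$, and Kobayashi--Ochiai concludes $X ≅ ℙ^n$. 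These inputs --- smoothness via Lipman--Zariski and nefness of $\sT_X$ via Nakayama --- are the missing ideas in your proposal.
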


A proof of Theorem~\ref{thm:Fano2} is given in Section~\ref{sec:potf2} below.
Section~\ref{sec:7-6} discusses many examples where equality in \eqref{eq:MY}
holds.

\begin{rem}[Torus quotients]
  If a variety $X$ satisfies Assumption~\ref{il:s1} and if additionally
  $K_X \equiv 0$, then $\what{c}_2\bigl( Ω^{[1]}_X \bigr) · [H]^{n-2} = 0$, and
  we recover the main results of \cite{GKP13, LT18}.  More intricate
  characterisations of torus quotients can be found in \cite{GKP20b}.
\end{rem}

\subsection{Application to Fano varieties}
\approvals{Daniel & yes \\Stefan & yes\\ Thomas & yes}

Note that Proposition~\ref{prop:Fano1} and Theorem~\ref{thm:Fano2} apply in
particular to $ℚ$-Fano varieties, and give a characterisation of finite
quotients of projective spaces among $ℚ$-Fano varieties $X$ with semistable
canonical extension $ℰ_X$.

In fact, in the last few decades multiple stability notions for Fano manifolds
have been introduced, with a view both towards the construction of moduli spaces
and the existence question for Kähler-Einstein metrics, see for instance the
survey \cite{Xu-K-stabilityNotes}.  The klt condition on the singularities
appears naturally in this context, see \cite{MR3010808}.  In
Section~\ref{ssec:soce}, we prove that stability of the canonical extension can
be guaranteed in natural classes of examples, which include all smooth Fano
threefolds of Picard number one.  There, we also discuss the relation to
K-(semi)stability.

Finally, together with a result of Druel-Guenancia-Păun \cite[Thm.~B]{DGP20} on
the semistability of the canonical extension on $ℚ$-Fano varieties,
Proposition~\ref{prop:Fano1} and Theorem~\ref{thm:Fano2} immediately imply the
following statement.  This generalises the classical result discussed in
Remark~\ref{rem:smoothKE} above.

\begin{thm}[Characterisation of finite quotients of projective spaces]
  Let $X$ be a $ℚ$-Fano variety admitting a singular Kähler-Einstein metric.
  Then, the $ℚ$-Miyaoka-Yau inequality \eqref{eq:MY} holds, with equality if and
  only if $X ≅ ℙ^n/G$, where $G$ is a finite group of automorphisms acting
  without fixed points in codimension one.  \qed
\end{thm}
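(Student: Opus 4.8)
The plan is to assemble the statement from the two main theorems of the paper together with the cited semistability result of Guenancia–Păun, the only genuinely external input. First I would arrange matters so that Theorems~\ref{thm:Fano1} and~\ref{thm:Fano2} apply with an explicit polarisation. Since $X$ is $ℚ$-Fano, the class $-K_X$ is $ℚ$-ample, hence nef, so the standing hypotheses of Theorem~\ref{thm:Fano2} are met. Choosing an integer $m$ for which $H := -m·K_X$ is an ample Cartier divisor, I observe that slope (semi)stability is insensitive to rescaling the polarisation, so $ℰ_X$ is $H$-semistable precisely if it is semistable with respect to $-K_X$; and because $[H]^{n-2} = m^{n-2}·[-K_X]^{n-2}$, the inequality~\eqref{eq:MY_arbitrary_H} for this $H$ is literally~\eqref{eq:MY}, with equality in one holding exactly when equality holds in the other. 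This reduction is the key bookkeeping that lets the general-$H$ theorems speak about the anticanonical Miyaoka–Yau inequality.

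Next I would invoke \cite{GP20}: the singular Kähler–Einstein metric on the $ℚ$-Fano variety $X$ induces a Hermitian–Yang–Mills metric on the canonical extension $ℰ_X$ — the singular analogue of Tian's construction recalled in Remark~\ref{rem:smoothKE} — whence $ℰ_X$ is semistable with respect to $-K_X$, and therefore with respect to $H$. Feeding this semistability into Theorem~\ref{thm:Fano1} yields~\eqref{eq:MY_arbitrary_H} for $H$, which by the first paragraph is exactly the asserted $ℚ$-Miyaoka–Yau inequality~\eqref{eq:MY}. This settles the inequality part of the statement.

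For the equality case, suppose equality holds in~\eqref{eq:MY}. Then equality holds in~\eqref{eq:MY_arbitrary_H} for $H$, and together with the $H$-semistability of $ℰ_X$ this is precisely condition~\ref{il:s1} of Theorem~\ref{thm:Fano2}. By the equivalence \ref{il:s1}~$\Leftrightarrow$~\ref{il:s2}, the variety $X$ is a finite quotient, free in codimension one, either of projective space or of an Abelian variety. I would then rule out the Abelian case: a quasi-étale finite quotient of an Abelian variety has numerically trivial canonical class, which contradicts the $ℚ$-ampleness of $-K_X$; hence $X ≅ ℙ^n/G$ with $G$ acting without fixed points in codimension one. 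For the converse, if $X ≅ ℙ^n/G$, then the uniformisation underlying~\ref{il:s2} makes the canonical extension projectively flat after a quasi-étale cover, so the characteristic class $\frac{n}{2(n+1)}·\what{c}_1\bigl(Ω^{[1]}_X\bigr)² - \what{c}_2\bigl(Ω^{[1]}_X\bigr)$ vanishes identically in $ℚ$-cohomology (as it does on $ℙ^n$ itself); intersecting with $[-K_X]^{n-2}$ gives equality in~\eqref{eq:MY}.

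The genuinely hard content is external and already cited: the Guenancia–Păun implication \emph{Kähler–Einstein $⇒$ semistable canonical extension}, with everything else being a synthesis of results established in this paper. The two points demanding care — and thus the main obstacles within this argument — are the polarisation matching (transporting semistability from the naturally anticanonical Kähler–Einstein setting to an \emph{ample Cartier} $H$, which is why the choice $H = -m·K_X$ is essential) and, in the converse direction, upgrading the single intersection-number equality supplied by~\ref{il:s1} to genuine vanishing of the Miyaoka–Yau class against the anticanonical polarisation, which requires the projective-flatness output of the uniformisation rather than the inequality alone.
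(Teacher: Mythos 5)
Your proposal is correct and follows exactly the route the paper intends: the paper states this theorem with a \qed, deriving it as an immediate consequence of Theorems~\ref{thm:Fano1} and~\ref{thm:Fano2} together with the Guenancia--P\u{a}un semistability result, and your write-up simply supplies the routine details (normalising the polarisation to $H=-m\cdot K_X$, excluding the Abelian-quotient alternative via $K_X\equiv 0$ versus ampleness of $-K_X$) that the paper leaves implicit. The only cosmetic difference is in the converse direction, where you invoke projective flatness to get vanishing of the Miyaoka--Yau class, whereas the paper's proof of \ref{il:s2}~$\Rightarrow$~\ref{il:s1} already yields equality for an \emph{arbitrary} ample Cartier $H$, so either reading closes that step.
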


\subsection{Projectively flat sheaves on singular spaces}
\approvals{Daniel & yes \\Stefan & yes\\ Thomas & yes}

The key objects used in our arguments are reflexive sheaves $ℱ$ such that
$ℱ|_{X_{\reg}}$ is locally free and holomorphically projectively flat; i.e., the
projectivisation of $ℱ|_{X_{\reg}}$ is induced by a projective representation of
the fundamental group of $X_{\reg}$.  Details are discussed in
Section~\ref{ssec:pfbas}.  The proofs of our main results rely on the following
technical criterion for projective flatness, which generalises classic results
from the smooth case to the setting of klt spaces.  Its proof crucially relies
on \cite{LT18}.

\begin{prop}[Criterion for projective flatness]\label{prop:pflatnessCriterion}
  Let $X$ be an $n$-dimensional, projective klt space and let $H ∈ \Div(X)$ be
  ample.  Further, let $ℱ$ be a reflexive sheaf of rank $r$ on $X$.  Assume that
  $ℱ$ is semistable with respect to $H$ and that its $ℚ$-Chern classes satisfy
  the equation
  \begin{equation}\label{eq:xxA}
    \frac{r-1}{2r}·\what{c}_1(ℱ)²·[H]^{n-2} = \what{c}_2(ℱ)·[H]^{n-2}.
  \end{equation}
  Then, $ℱ|_{X_{\reg}}$ is locally free and projectively flat.
\end{prop}

A proof of Proposition~\ref{prop:pflatnessCriterion} is given in
Section~\ref{sec:potf0} below.  Section~\ref{sec:3-3} describes the structure of
the sheaf $ℱ$ in more detail.  Further applications of
Proposition~\ref{prop:pflatnessCriterion} are discussed in \cite{GKP20b}.

\subsection{Thanks}
\approvals{Daniel & yes \\Stefan & yes\\ Thomas & yes}

We would like to thank Indranil Biswas, Ruadhaí Dervan, and Stefan Schröer for
patiently answering our questions.  Moreover, we thank Henri Guenancia and Mihai
Păun for discussions and the anonymous referee for helpful comments.

%
%
\svnid{$Id: 02-notation.tex 746 2021-04-01 07:06:38Z kebekus $}

\section{Conventions and notation}
\subversionInfo

\subsection{Global conventions}
\approvals{Daniel & yes \\Stefan & yes\\ Thomas & yes}

Throughout this paper, all schemes, varieties and morphisms will be defined over
the complex number field.  We follow the notation and conventions of
Hartshorne's book \cite{Ha77}.  In particular, varieties are always assumed to
be irreducible.  We refer the reader to \cite{KM98} for notation around
higher-dimensional birational geometry.

\begin{defn}[Klt spaces]\label{defn:kltspace}
  A normal, quasi-projective variety $X$ is called a \emph{klt space} if there
  exists an effective Weil $ℚ$-divisor $Δ$ such that the pair $(X, Δ)$ is klt.
\end{defn}

\subsection{Reflexive sheaves}
\approvals{Daniel & yes \\Stefan & yes\\ Thomas & yes}

Given a normal, quasi-projective variety (or normal, irreducible complex space)
$X$, we write $Ω^{[p]}_X := \bigl(Ω^p_X \bigr)^{**}$ and refer to this sheaf as
\emph{the sheaf of reflexive differentials}.  More generally, given any coherent
sheaf $ℰ$ on $X$, write $ℰ^{[⊗ m]} := \bigl(ℰ^{⊗ m} \bigr)^{**}$ and
$\det ℰ := \bigl( Λ^{\rank ℰ} ℰ \bigr)^{**}$.  Given any morphism $f : Y → X$ of
normal, quasi-projective varieties (or normal, irreducible, complex spaces), we
write $f^{[*]} ℰ := (f^* ℰ)^{**}$.

\subsection{Varieties and complex spaces}
\approvals{Daniel & yes \\Stefan & yes\\ Thomas & yes}

In order to keep notation simple, do not distinguish between algebraic varieties
and their underlying complex spaces, unless there is specific danger of
confusion.  Along these lines, if $X$ is a quasi-projective complex variety, we
write $π_1(X)$ for the fundamental group of the associated complex space.

\subsection{Covering maps and quasi-étale morphisms}
\approvals{Daniel & yes \\Stefan & yes\\ Thomas & yes}

A \emph{cover} or \emph{covering map} is a finite, surjective morphism
$γ : X → Y$ of normal, quasi-projective varieties (or normal, irreducible
complex spaces).  The covering map $γ$ is called \emph{Galois} if there exists a
finite group $G ⊂ \Aut(X)$ such that $γ$ is isomorphic to the quotient map.

A morphism $f : X → Y$ between normal varieties (or normal, irreducible complex
spaces) is called \emph{quasi-étale} if $f$ is of relative dimension zero and
étale in codimension one.  In other words, $f$ is quasi-étale if
$\dim X = \dim Y$ and if there exists a closed, subset $Z ⊆ X$ of codimension
$\codim_X Z ≥ 2$ such that $f|_{X ∖ Z} : X ∖ Z → Y$ is étale.

\subsection{Maximally quasi-étale spaces}
\approvals{Daniel & yes \\Stefan & yes\\ Thomas & yes}
\label{ssec:mqes}

Let $X$ be a normal, quasi-projective variety (or a normal, irreducible complex
space).  We say that $X$ is \emph{maximally quasi-étale} if the natural
push-forward map of fundamental groups,
\[
  π_1(X_{\reg}) \xrightarrow{(\text{incl})_*} π_1(X)
\]
induces an isomorphism between the profinite completions,
$\what{π}_1(X_{\reg}) ≅ \what{π}_1(X)$.

\begin{rem}
  Recall from \cite[0.7.B on p.~33]{FL81} that the natural push-forward map
  $(\text{incl})_*$ is always surjective.  If $X$ is any klt space, then $X$
  admits a quasi-étale cover that is maximally quasi-étale and again a klt
  space, \cite[Thm.~1.14]{GKP13}.
\end{rem}

%
%
\svnid{$Id: 03-projectiveFlatness.tex 747 2021-04-01 07:12:38Z kebekus $}

\section{Projective flatness}
\subversionInfo

\subsection{Projectively flat bundles and sheaves}
\approvals{Daniel & yes \\Stefan & yes\\ Thomas & yes}
\label{ssec:pfbas}

As pointed out in the introduction, \emph{projective flatness} is the core
concept of this paper.  Projectively flat bundles over differentiable manifolds
are thoroughly discussed in the literature, for instance in the classic textbook
\cite{Kob87}.  We are, however, not aware of references that cover the singular
case.  We have therefore chosen to introduce the relevant notions in some detail
here.  In a nutshell, we call a projective space bundle \emph{projectively flat}
if it comes from a $\PGL$-representation of the fundamental group.  The
following construction and the subsequent definitions make this precise.

\begin{construction}\label{cons:2-1}
  Let $X$ be a normal and irreducible complex space.  Given a number $r ∈ ℕ$ and
  a representation of the fundamental group, $ρ : π_1(X) → \PGL(r+1, ℂ)$,
  consider the universal cover $\wtilde{X} → X$ and the diagonal action of
  $π_1(X)$ on $\wtilde{X} ⨯ ℙ^r$.  The quotient
  \[
    ℙ_ρ := \factor{\wtilde{X} ⨯ ℙ^r}{π_1(X)}
  \]
  is a complex space that carries the natural structure of a locally trivial
  $ℙ^r$-bundle over the original space $X$.
\end{construction}

\begin{defn}[Projectively flat bundles and sheaves on complex spaces]\label{def:3-2}
  Let $X$ be a normal and irreducible complex space, let $r ∈ ℕ$ be any number
  and let $ℙ → X$ be a locally trivial $ℙ^r$-bundle.  We call the bundle $ℙ → X$
  \emph{(holomorphically) projectively flat} if there exists a representation of
  the fundamental group, $ρ : π_1(X) → ℙ\negthinspace\GL(r+1,ℂ)$, and an
  isomorphism $ℙ ≅_X ℙ_ρ$ of complex spaces over $X$, where $ℙ_ρ$ is the bundle
  constructed in \ref{cons:2-1} above.  A locally free coherent sheaf $ℱ$ of
  $𝒪_X$-modules is called \emph{(holomorphically) projectively flat} if the
  associated bundle $ℙ(ℱ)$ is projectively flat.
\end{defn}

\begin{defn}[Projectively flat bundles and sheaves on complex varieties]
  Let $X$ be a connected, complex, quasi-projective variety and let $r ∈ ℕ$ be
  any number.  An étale locally trivial $ℙ^r$-bundle $ℙ → X$ is called
  \emph{projectively flat} if the associated analytic bundle
  $ℙ^{(an)} → X^{(an)}$ is projectively flat.  Ditto for coherent sheaves.
\end{defn}

On complex manifolds, projective flatness is of course equivalent to the
existence of certain connections.  We briefly recall the following standard
fact.

\begin{fact}[Projective flatness and connections]\label{fact:projFlat}
  Let $X$ be a connected complex manifold and let $ℱ$ be a locally free coherent
  sheaf on $X$.  Then, the following are equivalent.
  \begin{enumerate}
  \item The locally free sheaf $ℱ$ is projectively flat in the sense of
    Definition~\ref{def:3-2}.

  \item The locally free sheaf $ℱ$ admits a holomorphic connection whose
    curvature tensor is of the form
    \[
      R = α · \Id_ℱ ∈ H⁰\bigl(X, \,Ω²_X ⊗ \sEnd(ℱ) \bigr)
    \]
    for some holomorphic $2$-form $α$ on $X$.
  \end{enumerate}
\end{fact}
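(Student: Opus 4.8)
The plan is to move back and forth between the flat projective structure on $ℙ(ℱ)$ and holomorphic connections on $ℱ$ itself, using the standard dictionary between the two; over manifolds this is classical and the argument follows \cite[Chap.~I]{Kob87}. The bridge is the following observation, which I would set up first: a holomorphic connection $∇$ on $ℱ$ induces a holomorphic connection on the associated $\PGL(r+1,ℂ)$-bundle $ℙ(ℱ)$, and the curvature of the induced connection is the image of the curvature $R$ of $∇$ under the projection $\sEnd(ℱ) → \sEnd(ℱ)/\bigl(𝒪_X·\Id_ℱ\bigr)$ onto the trace-free part. Since scalar endomorphisms lie in the kernel of this projection, the induced projective connection is flat exactly when $R$ is a multiple of $\Id_ℱ$.

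For the implication from (2) to (1), I would assume $R = α·\Id_ℱ$. By the observation above, the induced holomorphic connection on the $\PGL$-bundle $ℙ(ℱ)$ is flat. A flat holomorphic connection on a locally trivial $ℙ^r$-bundle integrates to a holomorphic foliation transverse to the fibres, whose monodromy is a representation $ρ : π_1(X) → \PGL(r+1,ℂ)$; pulling back to the universal cover $\wtilde{X} → X$, the flat structure trivialises $ℙ(ℱ)$ equivariantly, and comparison with Construction~\ref{cons:2-1} yields an isomorphism $ℙ(ℱ) ≅_X ℙ_ρ$. Hence $ℱ$ is projectively flat in the sense of Definition~\ref{def:3-2}.

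For the converse, I would start from an isomorphism $ℙ(ℱ) ≅ ℙ_ρ$. The product $\wtilde{X} ⨯ ℙ^r$ carries the flat connection pulled back from the second factor, and since this connection is $π_1(X)$-invariant it descends to a flat holomorphic projective connection on $ℙ_ρ ≅ ℙ(ℱ)$. Over an open cover $\{U_i\}$ trivialising $ℱ$, this projective connection lifts on each $U_i$ to a genuine holomorphic connection $∇_i$ on $ℱ|_{U_i}$; flatness of the projective connection forces the curvature of each $∇_i$ to be a scalar $2$-form times $\Id_ℱ$, and on overlaps two lifts differ by a closed holomorphic $1$-form $β_{ij}$ times $\Id_ℱ$. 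Consequently the local curvatures agree on overlaps and glue to a single global holomorphic $2$-form $α$, and the corrected global connection $∇$ on $ℱ$ satisfies $R = α·\Id_ℱ$.

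The main obstacle is precisely this last gluing: the flat structure on the $\PGL$-bundle sees $ℱ$ only up to the central (determinant) factor, so assembling the local lifts $∇_i$ into one honest holomorphic connection on $ℱ$ requires controlling the closed scalar-valued $1$-forms $β_{ij}·\Id_ℱ$ that record this ambiguity. The curvature form $α$ is itself unproblematic, being globally well defined because the $β_{ij}$ are closed; the delicate point is that the $β_{ij}$ assemble to a coboundary in $Ω¹_X$, equivalently that $\det ℱ$ carries a compatible holomorphic connection. On a manifold this is exactly the content of the classical Atiyah-class computation of \cite{Ati57}, which I would invoke rather than reproduce.
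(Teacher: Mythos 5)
The paper does not write out an argument at all: its entire proof of this Fact is the remark that the statement is \cite[I.Cor.~2.7 and I.Prop.~2.8]{Kob87} in the $C^∞$-setting and that the proofs carry over to the holomorphic setting \emph{mutatis mutandis}. Your direction (2)$\,⇒\,$(1) is the standard argument and is fine: a holomorphic connection with central curvature induces a flat holomorphic connection on the associated $\PGL(r+1,ℂ)$-principal bundle, whose monodromy yields $ρ$ and the isomorphism $ℙ(ℱ) ≅_X ℙ_ρ$.

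Your direction (1)$\,⇒\,$(2) has a genuine gap at exactly the point you flag, and the appeal to \cite{Ati57} does not close it. A minor issue first: two local lifts of the flat projective connection differ by $β_{ij}·\Id_ℱ$ with $β_{ij}$ an arbitrary holomorphic $1$-form; nothing forces $dβ_{ij}=0$, so the local curvature forms $α_i$ need not agree on overlaps as you assert. The serious issue is that assembling the $∇_i$ into a single global holomorphic connection on $ℱ$ is obstructed by the central part of the Atiyah class, i.e.\ by $a(\det ℱ) ∈ H¹\bigl(X,\,Ω¹_X\bigr)$: projective flatness kills only the trace-free part. Atiyah's computation \emph{identifies} this obstruction; it does not make it vanish, and it genuinely does not vanish in general. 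For instance $ℱ = 𝒪_{ℙ¹}(1)^{⊕ 2}$ has trivial projectivisation, hence is projectively flat in the sense of Definition~\ref{def:3-2}, yet admits no holomorphic connection at all, since any such would induce one on $\det ℱ = 𝒪_{ℙ¹}(2)$, whose Atiyah class is nonzero. This is precisely where the holomorphic category differs from Kobayashi's $C^∞$ setting, in which every bundle carries connections and the local lifts are glued with a partition of unity. So the step you propose to delegate to \cite{Ati57} would fail as stated; making it work requires the additional input that $\det ℱ$ carries a holomorphic connection (equivalently $a(\det ℱ)=0$), a caveat that the paper's own ``mutatis mutandis'' also passes over in silence.
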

\begin{proof}
  In the $C^∞$-setting, this is \cite[I.Cor.~2.7 and I.Prop.~2.8]{Kob87}.  The
  proofs carry over to the holomorphic setting \emph{mutatis mutandis}.
\end{proof}

\subsection{Projective flatness and flatness}
\approvals{Daniel & yes \\Stefan & yes\\ Thomas & yes}

In our earlier paper \cite{GKP13} we discussed locally free sheaves $ℰ$ on
singular spaces $X$ that were flat in the sense that $ℰ$ was defined by a
representation $π_1(X) → \GL(r,ℂ)$, in a manner analogous to
Construction~\ref{cons:2-1}.  The two notions are of course related.

\begin{prop}[Projective flatness and flatness of derived sheaves]\label{prop:4a}
  Let $X$ be a normal and irreducible complex space and $ℰ$ a rank-{}$r$,
  locally free sheaf on $X$.  If $ℰ$ is projectively flat, then the locally free
  sheaves $\sEnd(ℰ)$ and $\Sym^r ℰ ⊗ \det ℰ^*$ are flat in the sense of
  \cite[Def.~2.13]{GKPT19b}.
\end{prop}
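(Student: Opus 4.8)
The plan is to exploit the elementary observation that, while the standard representation of $\GL(r,ℂ)$ on $ℂ^r$ does not descend to $\PGL(r,ℂ)$, the two representations relevant here do: the central subgroup $ℂ^*·\Id ⊂ \GL(r,ℂ)$ acts trivially on both $\sEnd(ℂ^r)$ and $\Sym^r ℂ^r ⊗ (\det ℂ^r)^*$. Consequently, the associated-bundle constructions producing $\sEnd(ℰ)$ and $\Sym^r ℰ ⊗ \det ℰ^*$ depend only on the projectivisation $ℙ(ℰ)$ and not on $ℰ$ itself, and the flat $\PGL$-structure underlying projective flatness of $ℰ$ will transport directly into linear $\GL$-representations defining the two sheaves.

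In detail, I would first check the two central-character computations: the conjugation action $A ↦ gAg^{-1}$ of $\GL(r,ℂ)$ on $\sEnd(ℂ^r) = ℂ^r ⊗ (ℂ^r)^*$ sends $λ·\Id$ to the identity, while the natural action on $\Sym^r ℂ^r ⊗ (\det ℂ^r)^*$ scales by $λ^r·λ^{-r} = 1$ under $λ·\Id$. Both actions therefore factor through genuine linear representations $σ$ and $τ$ of $\PGL(r,ℂ)$. Writing $ρ : π_1(X) → \PGL(r,ℂ)$ for the representation furnished by Definition~\ref{def:3-2}, the compositions $σ \circ ρ$ and $τ \circ ρ$ are then honest linear representations of $π_1(X)$ into the relevant general linear groups.

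The substance of the argument is to identify the flat bundles attached to $σ \circ ρ$ and $τ \circ ρ$ (via the linear analogue of Construction~\ref{cons:2-1}, as in \cite{GKP13}) with $\sEnd(ℰ)$ and $\Sym^r ℰ ⊗ \det ℰ^*$. I would phrase this through frame bundles. Let $F$ denote the $\GL(r,ℂ)$-frame bundle of $ℰ$; then $P := F/ℂ^*$ is a principal $\PGL(r,ℂ)$-bundle, canonically the projective frame bundle of $ℙ(ℰ)$, and projective flatness of $ℰ$ says exactly that $P$ is the flat bundle $\wtilde{X} ⨯_{π_1(X),\, ρ} \PGL(r,ℂ)$. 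Because the $\GL(r,ℂ)$-action on $\sEnd(ℂ^r)$ factors through $\PGL(r,ℂ)$, the standard isomorphism $\sEnd(ℰ) ≅ F ⨯_{\GL(r,ℂ)} \sEnd(ℂ^r)$ rewrites as $\sEnd(ℰ) ≅ P ⨯_{\PGL(r,ℂ)} \sEnd(ℂ^r) ≅ \wtilde{X} ⨯_{π_1(X)} \sEnd(ℂ^r)$, which is the flat bundle defined by $σ \circ ρ$; this exhibits $\sEnd(ℰ)$ as flat in the sense of \cite[Def.~2.13]{GKPT19b}. The identical chain of identifications, with $\sEnd(ℂ^r)$ replaced by $\Sym^r ℂ^r ⊗ (\det ℂ^r)^*$ and $σ$ by $τ$, handles the second sheaf.

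The computation is essentially formal, and I do not expect a serious obstacle. The one point genuinely requiring care is conceptual rather than technical: since $ρ$ need not lift to a $\GL(r,ℂ)$-representation, the sheaf $ℰ$ itself may well fail to be flat, so the argument must route entirely through quantities insensitive to this central ambiguity. This is exactly what the two central-character computations guarantee—in particular, the twist by $\det ℰ^*$ in the second sheaf is precisely calibrated to cancel the weight $λ^r$ coming from $\Sym^r$. A brief remark confirming that the frame-bundle and associated-bundle formalism remains valid over a normal, irreducible complex space—all constructions being local, with locally constant transition data—would then complete the plan.
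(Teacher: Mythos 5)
Your proposal is correct and rests on exactly the same idea as the paper's (one-line) proof, namely that the representations $A \mapsto \End A$ and $A \mapsto \frac{1}{\det A}\cdot\Sym^r A$ of $\GL(r,\bC)$ kill the centre and hence factor through $\PGL(r,\bC)$; the paper states only this and leaves the associated-bundle bookkeeping implicit, which you have spelled out correctly.
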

\begin{proof}
  The group morphisms
  \[
    \begin{matrix}
      \GL(r, ℂ) & → & \GL\bigl( r², ℂ \bigr) \\
      A & ↦ & \End A
    \end{matrix}
    \qquad\text{and}\qquad
    \begin{matrix}
      \GL(r, ℂ) & → & \GL\bigl( \left( \begin{smallmatrix} 2r-1 \\ r
        \end{smallmatrix}\right), ℂ \bigr) \\
      A & ↦ & \frac{1}{\det A}·\Sym^r A
    \end{matrix}
  \]
  factor via $\PGL(r, ℂ)$.
\end{proof}

In case where $X$ is maximally quasi-étale, Proposition~\ref{prop:4a} has the
following consequence, which we find remarkable because it can be used to
guarantee local freeness of certain sheaves at the singular points of $X$.  We
will later use it in a setting where $ℰ$ is constructed so that $\sEnd(ℰ)$
contains the tangent sheaf $𝒯_X$ as a direct summand and where Chern class
equalities guarantee projective flatness of $ℰ|_{X_{\reg}}$.

\begin{cor}[Projective flatness and local freeness of derived sheaves I]\label{cor:3-9}
  Let $X$ be a normal, irreducible complex space that is maximally quasi-étale.
  Let $ℰ$ be a reflexive sheaf on $X$ such that $ℰ|_{X_{\reg}}$ is locally free
  and projectively flat.  Then, the sheaves
  $\bigl(\Sym^r ℰ ⊗ \det ℰ^*\bigr)^{**}$ and $\sEnd(ℰ)$ are both locally free
  and flat in the sense of \cite[Def.~2.13]{GKPT19b}.
\end{cor}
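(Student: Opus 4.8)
My plan is to prove both statements at once by passing to the regular locus, invoking Proposition~\ref{prop:4a} there, and then using the maximally quasi-étale hypothesis to spread local freeness across $\sing X$. Set $r := \rank \sE$ and let $j : X_{\reg} \hookrightarrow X$ be the inclusion. By hypothesis $\sE|_{X_{\reg}}$ is locally free of rank $r$ and projectively flat, and $X_{\reg}$ is again a normal, irreducible complex space, so Proposition~\ref{prop:4a} applies verbatim with $X_{\reg}$ in the role of $X$. It shows that the two locally free sheaves $\sEnd\bigl( \sE|_{X_{\reg}} \bigr)$ and $\Sym^r\bigl( \sE|_{X_{\reg}} \bigr) \otimes \det\bigl( \sE|_{X_{\reg}} \bigr)^*$ are flat on $X_{\reg}$; that is, each arises from a linear representation of $\pi_1(X_{\reg})$.

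Next I would identify these with the restrictions of globally defined reflexive sheaves. Since $\sE$ is reflexive and $X$ is normal, both $\sEnd(\sE)$ and $\bigl( \Sym^r \sE \otimes \det \sE^* \bigr)^{**}$ are reflexive on $X$, and over $X_{\reg}$ they restrict to the two flat bundles just produced. Hence both sheaves are flat in the sense of \cite[Def.~2.13]{GKPT19b}, and the entire content that remains is local freeness along $\sing X$.

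Promoting flatness on $X_{\reg}$ to local freeness on all of $X$ is the heart of the matter, and the step I expect to be the \emph{main obstacle}; this is exactly where maximal quasi-étaleness enters, and it is available as the extension theorem for flat sheaves in \cite{GKP13, GKPT19b}. I would argue locally around a point $p \in \sing X$ as follows. Writing $\rho : \pi_1(X_{\reg}) \to \GL(N, \bC)$ for the representation attached to one of the two sheaves, its image $\Gamma$ is a finitely generated linear group and therefore residually finite by Malcev's theorem. Choosing a small contractible neighbourhood $U$ of $p$, every loop in $U \cap X_{\reg}$ is null-homotopic in $X$, so its class lies in the kernel of $\pi_1(X_{\reg}) \to \pi_1(X)$; because $\what{\pi}_1(X_{\reg}) \cong \what{\pi}_1(X)$, this kernel maps trivially into every finite quotient of $\pi_1(X_{\reg})$. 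Residual finiteness of $\Gamma$ then forces these local classes into $\ker \rho$, so the flat bundle has trivial monodromy around $p$ and extends across $p$ as a trivial, in particular locally free, bundle. By normality of $X$ the reflexive extension agrees with this local model near $p$, which proves local freeness there. Applying the argument to both representations yields local freeness of $\sEnd(\sE)$ and of $\bigl( \Sym^r \sE \otimes \det \sE^* \bigr)^{**}$, while flatness was already recorded above.
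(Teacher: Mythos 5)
Your proposal is correct and follows essentially the same route as the paper: apply Proposition~\ref{prop:4a} on $X_{\reg}$ to get linear representations of $\pi_1(X_{\reg})$, then use the isomorphism $\what{\pi}_1(X_{\reg}) \cong \what{\pi}_1(X)$ together with Malcev's residual finiteness of finitely generated linear groups to see that these representations kill $\ker\bigl(\pi_1(X_{\reg}) \to \pi_1(X)\bigr)$ and hence define flat, locally free extensions across $X_{\sing}$ agreeing with the reflexive hulls. The paper simply outsources this extension step to the argument of \cite[proof of Thm.~1.14]{GKP13} (or to \cite[Thm.~1.2b]{MR0262386}), whereas you unpack it as a local monodromy computation; the content is the same.
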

\begin{proof}
  The assumption that $X$ is maximally quasi-étale says that the natural
  morphism of étale fundamental groups,
  \[
    \what{π}_1 \bigl( X_{\reg} \bigr) → \what{π}_1 \bigl( X \bigr),
  \]
  is isomorphic.  The arguments from \cite[proof of Thm.~1.14 on p.~26]{GKP13}
  now apply verbatim to yield the desired extension.  Alternatively, use the
  fact that any finite-dimensional complex representation of
  $π_1 \bigl( X_{\reg} \bigr)$ extends to a representation of
  $π_1 \bigl( X \bigr)$ by \cite[Thm.~1.2b]{MR0262386}.
\end{proof}

Once we are working over a complex manifold rather than over an arbitrary
complex space, the converse statements are also true.

\begin{prop}[Projective flatness and local freeness of derived sheaves II]\label{prop:3-5}
  Let $X$ be a connected complex manifold and $ℱ$ be a locally free coherent
  sheaf on $X$ of rank $r$.  Then, the following are equivalent.
  \begin{enumerate}
  \item\label{il:proj_flat} The locally free sheaf $ℱ$ is projectively flat in
    the sense of Definition~\ref{def:3-2}.
    
  \item\label{il:End_flat} The locally free sheaf $\sEnd(ℱ)$ is flat in the
    sense that it admits a flat holomorphic connection.
    
  \item\label{il:sym} The locally free sheaf $(\Sym^r ℱ) ⊗ (\det ℱ^*)$ is flat
    in the sense that it admits a flat holomorphic connection.
  \end{enumerate}
\end{prop}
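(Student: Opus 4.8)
The plan is to establish the two equivalences \ref{il:proj_flat}$⇔$\ref{il:End_flat} and \ref{il:proj_flat}$⇔$\ref{il:sym}, using Fact~\ref{fact:projFlat} as the main engine. A preliminary remark reconciles the terminology: on a connected complex manifold a locally free sheaf is flat in the sense of \cite[Def.~2.13]{GKPT19b} — that is, defined by a representation of $π_1(X)$ — exactly when it admits a flat holomorphic connection, by the holomorphic Riemann–Hilbert correspondence (flat sections are holomorphic and trivialise the bundle on the universal cover). This matches the wording of items~\ref{il:End_flat}–\ref{il:sym} with the conclusion of Proposition~\ref{prop:4a}.

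For the forward implications \ref{il:proj_flat}$⇒$\ref{il:End_flat} and \ref{il:proj_flat}$⇒$\ref{il:sym} I would invoke Fact~\ref{fact:projFlat} to obtain a holomorphic connection $∇$ on $ℱ$ with curvature $R_∇ = α·\Id_ℱ$ for some holomorphic two-form $α$. The representations $A ↦ \End A$ and $A ↦ \tfrac{1}{\det A}·\Sym^r A$ appearing in the proof of Proposition~\ref{prop:4a} annihilate the centre $ℂ^*·\Id ⊂ \GL(r,ℂ)$, so their differentials kill scalar matrices. Hence the connections that $∇$ induces on $\sEnd(ℱ)$ and on $(\Sym^r ℱ)⊗(\det ℱ^*)$ have curvature $\operatorname{ad}(α·\Id) = 0$ and $0$, respectively, and are therefore flat. (Equivalently, this is Proposition~\ref{prop:4a} combined with the Riemann–Hilbert remark above.)

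The substance of the statement lies in the converse implications \ref{il:End_flat}$⇒$\ref{il:proj_flat} and \ref{il:sym}$⇒$\ref{il:proj_flat}, which I would treat uniformly. Write $W$ for the flat bundle in question, associated to $ℱ$ through a representation $τ$ that factors as $\GL(r,ℂ) → \PGL(r,ℂ) \xrightarrow{\;φ\;} \GL(W_0)$. A short eigenvalue computation shows that $φ$ is \emph{faithful} in both cases: for the adjoint representation this is immediate, while for $A ↦ \det(A)^{-1}\Sym^r A$ the relation $\Sym^r A = (\det A)·\Id$ forces all eigenvalues of $A$ to coincide, so $A$ is scalar. In particular the induced Lie-algebra map $dφ$ is injective. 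Since $τ$ factors through $\PGL(r,ℂ)$, its differential factors through the trace-free part, giving the Atiyah-class identity $a(W) = dφ\bigl(a_0(ℱ)\bigr)$, where $a_0(ℱ) ∈ H¹\bigl(X, Ω¹_X ⊗ \sEnd_0(ℱ)\bigr)$ is the trace-free part of the Atiyah class of $ℱ$, i.e.\ the Atiyah class of $ℙ(ℱ)$. As $W$ is flat we have $a(W) = 0$, and injectivity of $dφ$ then forces $a_0(ℱ) = 0$, so that $ℙ(ℱ)$ admits a holomorphic projective connection. Propagating the vanishing of the curvature of the flat connection on $W$ back through $dφ$ shows that the projective curvature of this connection vanishes as well, so $ℱ$ carries a holomorphic connection with scalar curvature, and Fact~\ref{fact:projFlat} yields projective flatness.

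The hard part will be exactly this last step. An \emph{arbitrary} flat holomorphic connection on $W$ need not respect the reduction of structure group from $\GL(W_0)$ to $φ\bigl(\PGL(r,ℂ)\bigr)$, nor the algebra structure of $\sEnd(ℱ)$, so one cannot simply declare it to be induced by a connection on $ℱ$; the vanishing $a_0(ℱ)=0$ only produces a holomorphic projective connection, not yet a flat one. The point to argue carefully is that injectivity of $dφ$ nevertheless pins down the trace-free part of the curvature of $ℱ$, upgrading this projective connection to a \emph{flat} one. I expect this to be the only place where the faithfulness of $φ$ — and not merely the fact that $τ$ kills the centre — is genuinely needed.
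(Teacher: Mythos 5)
Your forward implications are fine and coincide with the paper's route: Fact~\ref{fact:projFlat} produces a holomorphic connection on $ℱ$ with curvature $α·\Id_ℱ$, and since the representations $A ↦ \End A$ and $A ↦ \det(A)^{-1}·\Sym^r A$ kill the centre, the induced connections on $\sEnd(ℱ)$ and $(\Sym^r ℱ)⊗(\det ℱ^*)$ are flat. The problem is the converse, where your proposal stops exactly at the decisive step. Vanishing of the trace-free Atiyah class $a_0(ℱ)$ only yields a holomorphic projective connection on $ℙ(ℱ)$; the statement requires a projectively \emph{flat} one, i.e.\ a connection on $ℱ$ whose curvature is scalar. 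You name this upgrade as ``the hard part'' and ``the point to argue carefully'', but you never argue it --- and it does not follow from $a_0(ℱ)=0$ alone, since a bundle with vanishing Atiyah class admits a holomorphic connection but in general no flat one. Nor can one simply transport the given flat connection back: an arbitrary flat holomorphic connection on $W$ need not be compatible with the reduction of the structure group of its frame bundle to $φ\bigl(\PGL(r,ℂ)\bigr)$ (for $W=\sEnd(ℱ)$, it need not be a derivation of the algebra structure), which is precisely the obstruction you yourself point out. The paper does not attempt this step either: it quotes the proof of \cite[Prop.~2.1]{MR2545454} (see also \cite{MR2369185}), which carries it out using that $\End(V)$ is a faithful representation of the \emph{reductive} group $\PGL(V)$.

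Two further points. First, even the step you do carry out --- deducing $a_0(ℱ)=0$ from $a(W)=0$ --- needs more than injectivity of $dφ$ on Lie algebras: a fibrewise injective morphism of vector bundles induces an injection on $H¹$ only when it is split, and the splitting here comes from complete reducibility of $\PGL(r,ℂ)$-modules, i.e.\ from reductivity, which you never invoke. Second, your preliminary Riemann--Hilbert remark is correct but unnecessary, since items \ref{il:End_flat} and \ref{il:sym} are already phrased in terms of flat holomorphic connections. To repair the proof, either reproduce the projection-and-curvature argument of \cite[Prop.~2.1]{MR2545454} in full, or cite it as the paper does.
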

\begin{proof}
  The implication ``\ref{il:proj_flat}$\, ⇒\, $\ref{il:End_flat}'' follows from
  Fact~\ref{fact:projFlat}.  We refer the reader to \cite[I.Cor.~2.7 and
  I.Prop.~2.8]{Kob87} for a proof of the analogous statement in the
  $C^∞$-setting; again, the arguments carry over to the holomorphic setting
  \emph{mutatis mutandis}.  The implication
  ``\ref{il:End_flat}$\, ⇒\, $\ref{il:proj_flat}'' is proven in \cite[proof of
  Prop.~2.1]{MR2545454}, crucially using the fact that $\End(V)$ is a faithful
  representation of the complex reductive group $\PGL(V)$, where $V$ is a
  complex vector space of dimension $r$.  The equivalence
  ``\ref{il:proj_flat}$\, ⇔\, $\ref{il:sym}'' is proven in an entirely similar
  fashion, by looking at the representation $\Sym^r(V) ⊗ \det(V)$ instead of
  $\End(V)$.  We leave it to the reader to spell out the details.
\end{proof}

\begin{rem}
  For more information on locally free sheaves with a flat holomorphic
  connection, the reader is referred to \cite[Sect.~7]{Ati57}.
\end{rem}

The following example shows that under the assumptions of
Corollary~\ref{cor:3-9} the sheaf $ℰ$ itself is not necessarily locally free.

\begin{example}
  Let $q: \wtilde{X} → ℙ¹$ be the second Hirzebruch surface with its natural
  projection to $ℙ¹$, that is,
  $\wtilde{X} := ℙ_{ℙ¹}\bigl(𝒪_{ℙ¹}(2) ⊕ 𝒪_{ℙ¹}\bigr)$.  Let $E ⊂ \wtilde{X}$ be
  the section at infinity, and note that the complement $\wtilde{X}∖E$ is simply
  connected, since it is the total space of a line bundle over a simply
  connected space.  Next, consider the blow-down $π: \wtilde{X} → X$ of $E$.
  The variety $X$ is then isomorphic to the subvariety of $ℙ³$ given as the cone
  over a quadric normal curve in $ℙ²$.  Finally, let $D := π(F)$ be the image of
  a $q$-fibre $F$.  The variety $D$ is then a line through the unique singular
  point $P ∈ X$.  Since $π$ is an isomorphism away from $E$, the smooth locus
  $X_{\reg}$ of $X$ is simply connected; in particular, $X$ is maximally
  quasi-étale.  The Weil divisorial sheaf $ℰ := 𝒪_X(D)$ is not locally free at
  $P$; however, $\sEnd (ℰ) ≅ \bigl(𝒪_X(D) ⊗ 𝒪_X(-D)\bigr)^{**} ≅ 𝒪_X$ is locally
  free and flat on $X$.
\end{example}

\subsection{Projective flatness on maximally quasi-étale spaces}
\approvals{Daniel & yes\\Stefan & yes\\ Thomas & yes}
\label{sec:3-3}

The proofs of our main results require that we consider a setting where $X$ is
normal and maximally quasi-étale, and $ℱ$ is a coherent, reflexive sheaf on $X$
whose restriction $ℱ|_{X_{\reg}}$ is locally free and projectively flat in the
sense of Definition~\ref{def:3-2}.

\subsubsection{Extension and local structure}
\approvals{Daniel & yes \\Stefan & yes\\ Thomas & yes}

In this section we show that the structure of $ℱ$ at the singular points of $X$
is eventually rather simple.

\begin{prop}[Extension of projectively flat $ℙ^n$-bundles]\label{prop:projflat}
  Let $X$ be a normal, irreducible complex space that is maximally quasi-étale.
  Then, any representation $ρ°: π_1(X_{\reg}) → \PGL(r,ℂ)$ factors via a
  representation of $π_1(X)$ as follows,
  \[
    \begin{tikzcd}[column sep=large]
      π_1(X_{\reg}) \arrow[rr, bend left=15, "ρ°"] \arrow[r, two heads, "(\text{incl})_*"'] & π_1(X) \arrow[r, "ρ"'] & ℙ\negthinspace\GL(r,ℂ).
    \end{tikzcd}
  \]
  In particular, any projectively flat $ℙ^{r-1}$-bundle $ℙ_{\reg} → X_{\reg}$
  extends to a projectively flat projective bundle $ℙ_X → X$.
\end{prop}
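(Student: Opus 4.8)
The plan is to deduce the factorisation directly from the extension theorem for \emph{linear} representations that already underlies Corollary~\ref{cor:3-9}, exploiting the elementary fact that $\PGL(r,ℂ)$ is itself a linear group. First I would fix a faithful finite-dimensional representation of $\PGL(r,ℂ)$, for instance the adjoint embedding
\[
  \mathrm{Ad} : \PGL(r,ℂ) \into \GL(r²,ℂ);
\]
the conjugation action of $\GL(r,ℂ)$ on the space of $(r⨯r)$-matrices has kernel the scalar matrices and hence descends to a faithful, i.e.\ injective, representation of $\PGL(r,ℂ)$. Composing, the given representation becomes an honest finite-dimensional complex representation $σ° := \mathrm{Ad} ◦ ρ° : π_1(X_{\reg}) → \GL(r²,ℂ)$.

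Because $X$ is maximally quasi-étale, the very tool invoked in the proof of Corollary~\ref{cor:3-9} now applies: by \cite[Thm.~1.2b]{MR0262386} the representation $σ°$ extends to $σ : π_1(X) → \GL(r²,ℂ)$, that is, $σ ◦ (\text{incl})_* = σ°$. The key observation is that this extension automatically takes values in the subgroup $\mathrm{Ad}\bigl(\PGL(r,ℂ)\bigr)$: since $(\text{incl})_*$ is surjective (recall \cite[0.7.B]{FL81}), every element of $π_1(X)$ has the form $(\text{incl})_*(g)$, so that $σ\bigl((\text{incl})_*(g)\bigr) = σ°(g) ∈ \mathrm{Ad}\bigl(\PGL(r,ℂ)\bigr)$. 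Thus $\Image σ = \Image σ° ⊆ \mathrm{Ad}\bigl(\PGL(r,ℂ)\bigr)$, and as $\mathrm{Ad}$ is injective I may set $ρ := \mathrm{Ad}^{-1} ◦ σ : π_1(X) → \PGL(r,ℂ)$. By construction $ρ ◦ (\text{incl})_* = \mathrm{Ad}^{-1} ◦ σ° = ρ°$, which is the asserted factorisation; uniqueness of $ρ$ is immediate from surjectivity of $(\text{incl})_*$.

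Conceptually, the factorisation holds because the elements of $\ker (\text{incl})_*$ become trivial in the profinite completion $\what{π}_1(X_{\reg}) ≅ \what{π}_1(X)$, hence in every finite quotient of $π_1(X_{\reg})$, whereas the image of a finitely generated linear representation is residually finite and therefore sees no such elements; the cited extension result packages exactly this mechanism. The main point to watch — and the only genuine obstacle — is precisely that the extended representation lands back inside $\PGL(r,ℂ)$ rather than in the ambient $\GL(r²,ℂ)$, which is where the surjectivity of $(\text{incl})_*$ is indispensable.

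For the final assertion I would argue as follows. By Definition~\ref{def:3-2} a projectively flat bundle $ℙ_{\reg} → X_{\reg}$ is isomorphic to the bundle $ℙ_{ρ°}$ associated by Construction~\ref{cons:2-1} to some representation $ρ° : π_1(X_{\reg}) → \PGL(r,ℂ)$. Applying the factorisation just obtained yields $ρ : π_1(X) → \PGL(r,ℂ)$ with $ρ ◦ (\text{incl})_* = ρ°$, and Construction~\ref{cons:2-1} turns $ρ$ into a projectively flat bundle $ℙ_X := ℙ_ρ → X$. Since restricting a flat bundle along the inclusion $X_{\reg} \into X$ corresponds to pulling the defining representation back along $(\text{incl})_*$, the restriction $ℙ_X|_{X_{\reg}}$ is the bundle attached to $ρ ◦ (\text{incl})_* = ρ°$, hence isomorphic to $ℙ_{ρ°} ≅ ℙ_{\reg}$. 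This furnishes the desired extension.
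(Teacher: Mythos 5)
Your argument is correct, and it reaches the factorisation by a slightly different route than the paper. The paper applies the residual-finiteness mechanism directly to the projective representation: it observes that $G := \Image(ρ°) ⊆ \PGL(r,ℂ)$ is a linear group, invokes Malcev's theorem to conclude that $G$ is residually finite, and then runs the factorisation argument of \cite[proof of Thm.~1.14]{GKP13} (the kernel of $(\text{incl})_*$ dies in every finite quotient of $π_1(X_{\reg})$, hence in $G$). You instead linearise first via the faithful adjoint embedding $\mathrm{Ad}: \PGL(r,ℂ) \into \GL(r²,ℂ)$, extend the resulting linear representation using Grothendieck's theorem \cite[Thm.~1.2b]{MR0262386} --- the ``alternative'' tool the paper itself offers in the proof of Corollary~\ref{cor:3-9} --- and then descend back to $\PGL(r,ℂ)$. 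The price of your route is the extra verification that the extended representation lands in $\mathrm{Ad}\bigl(\PGL(r,ℂ)\bigr)$, which you handle correctly via surjectivity of $(\text{incl})_*$; the benefit is that you quote a single clean extension theorem rather than reproducing the residual-finiteness argument, and your adjoint embedding is exactly the $\sEnd$-representation already used in Proposition~\ref{prop:4a}, so the reduction fits naturally into the paper's framework. Your closing paragraph on the bundle statement, identifying $ℙ_ρ|_{X_{\reg}}$ with $ℙ_{ρ°}$ via pullback of the defining representation along $(\text{incl})_*$, is also fine and is more explicit than the paper, which leaves this step implicit.
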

\begin{proof}
  Write $G := \Image(ρ°)$.  Since $\PGL(r,ℂ)$ is a linear group, so is $G$.  In
  particular, it follows from Malcev's theorem, \cite[Thm.~4.2]{MR0335656}, that
  $G$ is residually finite.  As in the proof of Corollary~\ref{cor:3-9}, using
  the arguments from \cite[proof of Thm.~1.14 on p.~26]{GKP13} we find that the
  representation $ρ°$ is induced by a representation of $π_1 \bigl( X \bigr)$,
  which yields the desired extension.
\end{proof}

Proposition~\ref{prop:projflat} has further consequences.  While we cannot
expect that the sheaf $ℱ$ is locally free near the singular points of $X$, it
turns out that $ℱ$ locally (in the analytic topology) always looks like a direct
sum of $\rank ℱ$ copies of the same Weil divisorial sheaf.

\begin{prop}[Local description of projectively flat sheaves]\label{prop:3-3x}
  Let $X$ be a normal, irreducible complex space that is maximally quasi-étale.
  Let $ℱ_{X_{\reg}}$ be a locally free and projectively flat coherent sheaf on
  $X_{\reg}$.  Then, the following holds.
  \begin{enumerate}
  \item\label{il:33x-1} If $U ⊆ X$ is any simply connected open subset, then
    there exists an invertible sheaf $ℒ_{U_{\reg}} ∈ \Pic(U_{\reg})$, unique up
    to isomorphism, such that
    \[
      ℱ_{X_{\reg}}|_{U_{\reg}} ≅ (ℒ_{U_{\reg}})^{⊕ \rank ℱ_{X_{\reg}}}.
    \]
    
  \item\label{il:33x-2} The sheaf $ℱ_{X_{\reg}}$ extends from $X_{\reg}$ to a
    reflexive coherent sheaf $ℱ_X$ on $X$ if and only if for every simply
    connected open $U ⊆ X$, the sheaf $ℒ_{U_{\reg}}$ extends from $U_{\reg}$ to
    a reflexive coherent sheaf $ℒ_U$ of rank one on $U$.  In this case,
    \[
      ℱ_X|_U ≅ (ℒ_U)^{⊕ \rank ℱ_{X_{\reg}}}.
    \]
  \end{enumerate}
\end{prop}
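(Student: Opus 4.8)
The plan is to exploit the monodromy description coming from projective flatness. Since $ℱ_{X_{\reg}}$ is projectively flat, its projectivisation is induced by a representation $ρ° : π_1(X_{\reg}) → \PGL(r,ℂ)$ with $r = \rank ℱ_{X_{\reg}}$, via Construction~\ref{cons:2-1}. For part~\ref{il:33x-1}, fix a simply connected open $U ⊆ X$ and restrict attention to $U_{\reg}$. The first thing I would check is that $U_{\reg}$ is connected (this follows from normality of $X$, since the singular locus has codimension at least two, so removing it from a connected open set leaves it connected) but that $π_1(U_{\reg})$ need \emph{not} be trivial --- the subtlety is precisely that $U$ simply connected does not force $U_{\reg}$ simply connected. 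However, because $X$ is maximally quasi-étale, I can invoke Proposition~\ref{prop:projflat}: the restricted representation $ρ°|_{π_1(U_{\reg})}$ factors through $π_1(U)$, which is trivial. Hence the restricted projective bundle $ℙ(ℱ_{X_{\reg}})|_{U_{\reg}}$ is induced by the trivial $\PGL$-representation, so it is the trivial $ℙ^{r-1}$-bundle $U_{\reg} ⨯ ℙ^{r-1}$.

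From triviality of the projectivisation I would then reconstruct the sheaf itself. A locally free sheaf whose projectivisation is trivial is of the form $ℒ^{⊕ r}$ for some line bundle $ℒ_{U_{\reg}} ∈ \Pic(U_{\reg})$: indeed, writing the trivialisation $ℙ(ℱ_{X_{\reg}})|_{U_{\reg}} ≅ U_{\reg} ⨯ ℙ^{r-1}$, the sheaf $ℱ_{X_{\reg}}|_{U_{\reg}}$ differs from the trivial bundle $𝒪_{U_{\reg}}^{⊕ r}$ precisely by tensoring with a line bundle, because the obstruction to lifting a $\PGL(r)$-cocycle to a $\GL(r)$-cocycle lives in line-bundle data. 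Concretely, the transition functions take values in scalar matrices $ℂ^* · \Id$, and a scalar-valued cocycle is exactly the datum of an invertible sheaf $ℒ_{U_{\reg}}$; the resulting bundle is $ℒ_{U_{\reg}}^{⊕ r}$. Uniqueness up to isomorphism of $ℒ_{U_{\reg}}$ follows because $\det\bigl(ℒ_{U_{\reg}}^{⊕ r}\bigr) ≅ ℒ_{U_{\reg}}^{⊗ r}$ pins down $ℒ_{U_{\reg}}$ up to an $r$-torsion ambiguity, and comparing two such splittings via an automorphism of $ℒ^{⊕ r}$ shows the line bundle is determined; I would argue this directly by noting any isomorphism $ℒ^{⊕ r} ≅ (ℒ')^{⊕ r}$ restricted to a summand gives a nonzero map $ℒ → (ℒ')^{⊕ r}$, hence $ℒ ≅ ℒ'$ on the connected $U_{\reg}$.

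For part~\ref{il:33x-2}, the equivalence is essentially formal once part~\ref{il:33x-1} is in hand. For the forward direction, if $ℱ_{X_{\reg}}$ extends to a reflexive sheaf $ℱ_X$ on $X$, then $ℒ_{U_{\reg}}$ extends: I would take $ℒ_U := (\det ℱ_X|_U)^{[1/r]}$ heuristically, but more safely, since $ℱ_X|_U$ is reflexive and agrees with $ℒ_{U_{\reg}}^{⊕ r}$ on $U_{\reg}$, any single reflexive summand or the appropriate $ℒ_U$ obtained from the splitting over $U_{\reg}$ pushed forward via $j_* = (\text{incl})_*$ (reflexive hull) gives the extension; reflexivity means $ℱ_X|_U = j_*(ℱ_{X_{\reg}}|_{U_{\reg}}) = j_*\bigl(ℒ_{U_{\reg}}^{⊕ r}\bigr) = \bigl(j_* ℒ_{U_{\reg}}\bigr)^{⊕ r}$, so $ℒ_U := j_* ℒ_{U_{\reg}}$ works and is reflexive of rank one. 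Conversely, if each $ℒ_{U_{\reg}}$ extends to a reflexive rank-one $ℒ_U$, then $ℒ_U^{⊕ r}$ is a reflexive extension of $ℱ_{X_{\reg}}|_{U_{\reg}}$ over $U$; by uniqueness of reflexive extensions (a reflexive sheaf equals the pushforward of its restriction to the complement of a codimension-two set) these local extensions are canonical, hence glue to a global reflexive $ℱ_X$, giving $ℱ_X|_U ≅ ℒ_U^{⊕ r}$. The main obstacle I anticipate is the point flagged above in part~\ref{il:33x-1}: making precise that triviality of the projective bundle over the \emph{non-simply-connected} $U_{\reg}$ is genuinely forced by the maximal quasi-étale hypothesis through Proposition~\ref{prop:projflat}, and then cleanly extracting the line bundle $ℒ_{U_{\reg}}$ from the trivialised projectivisation --- the gluing and reflexive-extension bookkeeping in part~\ref{il:33x-2} is routine by comparison.
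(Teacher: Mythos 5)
Your proposal is essentially the paper's own proof: Proposition~\ref{prop:projflat} is invoked in the same way to trivialise $ℙ(ℱ_{X_{\reg}})$ over $U_{\reg}$ (the paper extends the projective bundle to $X$ and restricts to the simply connected $U$, which is the same as your factoring of the representation through $π_1(U) = \{1\}$), the line bundle $ℒ_{U_{\reg}}$ is then extracted from the scalar-valued lift of the trivial $\PGL$-cocycle, and part~\ref{il:33x-2} reduces, exactly as you say, to the fact that $ι_*$ respects direct summands together with uniqueness of reflexive extensions. The only wobbly step is your justification of uniqueness --- a nonzero map $ℒ → ℒ'$ of line bundles on a connected space need not be an isomorphism --- but the paper does not argue this point at all, and it can be repaired by playing the components of the isomorphism off against those of its inverse.
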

\begin{proof}
  Write $r := \rank ℱ$.  We have seen in Proposition~\ref{prop:projflat} that
  the projectively flat $ℙ^{r-1}$-bundle $ℙ_{X_{\reg}} := ℙ(ℱ_{X_{\reg}})$
  extends to a projectively flat $ℙ^{r-1}$-bundle $ℙ_X$ on $X$.  By
  construction, the restriction of $ℙ_X$ to any simply connected open subset
  $U ⊆ X$ is trivial and then so is its restriction to $U_{\reg}$.  In
  particular, we have an isomorphism
  \[
    ℙ(ℱ_{X_{\reg}})|_{U_{\reg}} ≅_{U_{\reg}} ℙ_X|_{U_{\reg}} %
    ≅_{U_{\reg}} ℙ_U( 𝒪_{U_{\reg}}^{⊕ r+1}).
  \]
  It follows that $ℱ_{X_{\reg}}|_{U_{\reg}}$ and $𝒪_{U_{\reg}}^{⊕ r+1}$ differ
  only by a twist with an invertible $ℒ_{U_{\reg}} ∈ \Pic(U_{\reg})$.  This
  shows \ref{il:33x-1}.
  
  To prove \ref{il:33x-2}, consider one of the open sets $U$ and write
  $ι : U_{\reg} → U$ for the obvious inclusion.  Since push-forward respects
  direct summands, we find that
  \[
    ι_* (ℱ_{X_{\reg}}|_{U_{\reg}}) ≅ (ι_* ℒ_{U_{\reg}})^{⊕ r},
  \]
  and the left side is coherent if and only if every summand of the right side
  is.
\end{proof}

\subsubsection{Pull-back}
\approvals{Daniel & yes \\Stefan & yes\\ Thomas & yes}

As one consequence of the local description of projectively flat sheaves, we
find that they satisfy a surprising pull-back property.  We believe that the
following corollary is of independent interest and include it here for later
reference, even though to do not use it in the sequel.

\begin{cor}[Pull-back of projectively flat sheaves]\label{cor:3-10}
  Let $X$ be a normal, irreducible complex space that is maximally quasi-étale.
  Let $ℱ$ be a reflexive sheaf on $X$ such that $ℱ|_{X_{\reg}}$ is locally free
  and projectively flat.  If $\varphi : Y → X$ is any morphism where $Y$ is
  irreducible, smooth and where $\img(\varphi) ⊄ X_{\sing}$, then the reflexive
  pull-back $\varphi^{[*]} ℱ$ is locally free and projectively flat.
\end{cor}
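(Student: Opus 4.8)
The plan is to combine the local structure theory of projectively flat sheaves with the extension of the underlying $\PGL$-representation across the singularities of $X$, and then to transport everything along $\varphi$. Write $r := \rank \sF$, and let $\rho° : \pi_1(X_{\reg}) \to \PGL(r,\bC)$ be a representation inducing $\bP(\sF|_{X_{\reg}})$. By Proposition~\ref{prop:projflat}, $\rho°$ factors through $\pi_1(X)$, so $\bP(\sF|_{X_{\reg}})$ extends to a projectively flat $\bP^{r-1}$-bundle $\bP_X \to X$ that is trivial over every simply connected open subset of $X$. Observe also that the hypothesis $\img(\varphi) \not\subset X_{\sing}$ means precisely that the generic point of the irreducible space $Y$ is mapped into $X_{\reg}$; hence $Y_0 := \varphi^{-1}(X_{\reg})$ is a dense open subset of $Y$, and $\varphi_0 := \varphi|_{Y_0}$ factors through $X_{\reg}$.

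First I would establish local freeness, which is a local question on $Y$ in the analytic topology. Fix $y ∈ Y$ and a simply connected open neighbourhood $U ⊆ X$ of $\varphi(y)$. Since $\sF$ is reflexive and restricts to a projectively flat sheaf on $X_{\reg}$, Proposition~\ref{prop:3-3x} provides an isomorphism $\sF|_U ≅ (\sL_U)^{⊕ r}$ for a rank-one reflexive sheaf $\sL_U$ on $U$. Pulling back to a neighbourhood $V ⊆ \varphi^{-1}(U)$ of $y$ and taking reflexive hulls — which commute with finite direct sums — gives $\varphi^{[*]}\sF|_V ≅ \bigl(\varphi^{[*]}\sL_U|_V\bigr)^{⊕ r}$. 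Because the generic point of $Y$ maps into $U_{\reg}$, where $\sL_U$ is invertible, the reflexive pull-back $\varphi^{[*]}\sL_U|_V$ has generic rank one; being a reflexive sheaf of rank one on the \emph{smooth} space $V$, it is therefore invertible. Hence $\varphi^{[*]}\sF$ is, locally on $Y$, a direct sum of $r$ copies of a line bundle, and so locally free. This is the one step where the assumption $\img(\varphi) \not\subset X_{\sing}$ is genuinely used: it guarantees that $\varphi^{[*]}\sL_U$ really has rank one, so that no extra rank is created in the singular fibres.

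It remains to prove that $\varphi^{[*]}\sF$ is projectively flat, and here I would argue directly with transition functions rather than attempt to extend an isomorphism of bundles. Cover $X$ by simply connected opens $U_i$ over which $\bP_X$ is trivial; the resulting transition functions $g_{ij} : U_i ∩ U_j \to \PGL(r,\bC)$ are locally constant, since $\bP_X$ is flat. Over $V_i := \varphi^{-1}(U_i)$ the computation above yields a trivialisation $\bP(\varphi^{[*]}\sF)|_{V_i} ≅ V_i ⨯ \bP^{r-1}$, coming from $\varphi^{[*]}\sF|_{V_i} ≅ (\text{line bundle})^{⊕ r}$ together with the invariance of projectivisation under line-bundle twists. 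Let $h_{ij} : V_i ∩ V_j \to \PGL(r,\bC)$ be the associated holomorphic transition functions. Over the dense open subset $(V_i ∩ V_j) ∩ Y_0$ these trivialisations are, by construction, pulled back via $\varphi_0$ from the ones defining $\bP_X$ over $X_{\reg}$, so there $h_{ij}$ coincides with the locally constant map $g_{ij} ∘ \varphi$.

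The main obstacle is to propagate this agreement across the complement $Y ∖ Y_0 = \varphi^{-1}(X_{\sing})$, which need \emph{not} have codimension two — it may well be a divisor, as already the blow-down example above shows — so one cannot simply invoke that $\pi_1(Y_0) \to \pi_1(Y)$ is an isomorphism, nor extend a section of the (affine-fibred) isomorphism bundle. The resolution is that $h_{ij}$ is holomorphic on all of $V_i ∩ V_j$ while $g_{ij} ∘ \varphi$ is locally constant; since they agree on the dense subset $(V_i ∩ V_j) ∩ Y_0$, the identity theorem forces $h_{ij} = g_{ij} ∘ \varphi$ everywhere on $V_i ∩ V_j$. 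Thus $\bP(\varphi^{[*]}\sF)$ is described by the locally constant $\PGL(r,\bC)$-cocycle $(g_{ij} ∘ \varphi)$, that is, by the representation $\rho ∘ \varphi_* : \pi_1(Y) \to \PGL(r,\bC)$ in the sense of Construction~\ref{cons:2-1} — equivalently, it is the pull-back $\varphi^* \bP_X$ of the flat bundle $\bP_X$. By Definition~\ref{def:3-2} this makes $\varphi^{[*]}\sF$ projectively flat, completing the argument.
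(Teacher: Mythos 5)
Your argument is correct, and while the local-freeness step coincides with the paper's, the projective-flatness step takes a genuinely different route. For local freeness both you and the paper localise over $X$, invoke Proposition~\ref{prop:3-3x} to write $ℱ$ locally as $ℒ^{⊕ r}$, and observe that the reflexive rank-one pull-back $\varphi^{[*]}ℒ$ is invertible because $Y$ is smooth. For projective flatness the paper argues sheaf-theoretically: by Proposition~\ref{prop:3-5} it suffices to show that $\sEnd(\varphi^{[*]}ℱ)$ is flat, and this is done by proving that the natural map $\varphi^*\sEnd(ℱ) → \sEnd(\varphi^{[*]}ℱ)$ is an isomorphism --- checked locally on the summands $ℒ$, where it becomes a map $𝒪_Y → 𝒪_Y$ that is injective because it is generically injective (this is where $\img(\varphi) \not\subset X_{\sing}$ enters) and surjective because $\Id_ℒ$ pulls back to $\Id_{ℒ_Y}$; Corollary~\ref{cor:3-9} supplies flatness of $\sEnd(ℱ)$ and hence of its pull-back. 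You instead work directly with the projectivised bundle and its $\PGL(r,ℂ)$-cocycle, propagating the identity $h_{ij}=g_{ij}∘\varphi$ from the dense open set $Y_0=\varphi^{-1}(X_{\reg})$ to all of $Y$ by the identity theorem. Your route avoids Proposition~\ref{prop:3-5} and Corollary~\ref{cor:3-9} entirely and yields the sharper conclusion $ℙ(\varphi^{[*]}ℱ) ≅ \varphi^*ℙ_X$, i.e.\ that the pulled-back bundle is induced by the representation $ρ∘\varphi_*$ of $π_1(Y)$; the paper's route is shorter to write and keeps the bookkeeping at the level of endomorphism sheaves. One point you should make explicit rather than dismiss with ``by construction'': for $h_{ij}=g_{ij}∘\varphi$ to hold on $Y_0$, the trivialisations of $ℙ(\varphi^{[*]}ℱ)|_{V_i}$ induced by the direct-sum decompositions must agree, over $Y_0$, with the pull-backs of the \emph{flat} trivialisations of $ℙ_X|_{U_i}$ that define the locally constant $g_{ij}$. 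This is indeed the case because the decompositions of Proposition~\ref{prop:3-3x} are themselves manufactured from those flat trivialisations, but that compatibility is the hinge of your argument and deserves a sentence of its own.
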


\begin{rem}
  Note that Corollary~\ref{cor:3-10} applies to settings where the preimage
  $\varphi^{-1}(X_{\sing})$ is a divisor in $Y$.  One relevant case is where
  $\varphi$ is the inclusion map of a smooth curve $Y ⊂ X$ that passes through
  the singular locus.
\end{rem}

\begin{proof}[Proof of Corollary~\ref{cor:3-10}]
  For convenience of notation, write $ℱ_Y := \varphi^{[*]} ℱ$.  We begin by
  showing that $ℱ_Y$ is locally free.  As this question is local over $X$, we
  may assume without loss of generality that $X$ is simply connected.
  Proposition~\ref{prop:3-3x} will then allow to find a Weil divisorial sheaf
  $ℒ$ on $X$ such that $ℱ = ℒ^{⊕ \rank ℱ}$, which implies
  $ℱ_Y = \bigl( \varphi^{[*]} ℒ \bigr)^{⊕ \rank ℱ}$.  Local freeness of $ℱ_Y$
  follows since $\varphi^{[*]} ℒ$ is Weil divisorial on the smooth space $X$ and
  hence invertible.

  It remains to show that $ℱ_Y$ is projectively flat.  By
  Proposition~\ref{prop:3-5} it suffices to show that $\sEnd(ℱ_Y)$ is flat.
  This will be established by showing that the natural morphism
  \begin{equation}\label{eq:6456}
    \varphi^* \sEnd(ℱ) → \sEnd(ℱ_Y)
  \end{equation}
  is isomorphic.  But we have already seen in Corollary~\ref{cor:3-9} that
  $\sEnd(ℱ)$ is locally free and flat, and then so is its pull-back
  $\varphi^* \sEnd(ℱ)$.  To be more precise, recall that the
  Morphism~\eqref{eq:6456} comes to be as a composition
  \begin{equation}\label{eq:6457}
    \begin{tikzcd}
      \varphi^* \sEnd(ℱ) \ar[r] & \sEnd(\varphi^* ℱ) \ar[r] & \sEnd(\varphi^{[*]} ℱ).
    \end{tikzcd}
  \end{equation}
  The existence of the first morphism in \eqref{eq:6457} follows from the
  observation that any endomorphism of $ℱ$ induces an endomorphism of the
  pull-back $\varphi^* ℱ$.  The existence of the second morphism in
  \eqref{eq:6457} follows from the observation that any endomorphism of any
  sheaf induces an endomorphism morphism of its reflexive hull.

  The question ``Is \eqref{eq:6456} isomorphic?'' is again local over $X$, so
  that we may again assume without loss of generality that $ℱ = ℒ^{⊕ \rank ℱ}$.
  As before, we consider the invertible sheaf
  $ℒ_Y := \varphi^{[*]} ℒ ∈ \Pic(Y)$.  As both arrows in \eqref{eq:6457} respect
  the direct sum decomposition, it suffices to show that the natural morphism
  \begin{equation}\label{eq:6458}
    𝒪_Y ≅ \varphi^* 𝒪_X ≅ \varphi^* \sEnd(ℒ) → \sEnd(ℒ_Y) ≅ 𝒪_Y
  \end{equation}
  is isomorphic.  The assumption $\img(\varphi) ⊄ X_{\sing}$ implies that
  \eqref{eq:6458} is generically injective, hence injective.  But
  \eqref{eq:6458} is also surjective, as the pull-back of the identity section
  $\Id_{ℒ} ∈ \sEnd(ℒ)(X)$, which generates $\sEnd(ℒ)$, maps to the identity
  $\Id_{ℒ_Y} ∈ \sEnd(ℒ_Y)(Y)$, which generates $\sEnd(ℒ_Y)$.
\end{proof}

%
%
\svnid{$Id: 04-canonicalExtension.tex 747 2021-04-01 07:12:38Z kebekus $}

\section{The canonical extension}
\label{ssec:canExt}
\subversionInfo
\approvals{Daniel & yes \\Stefan & yes\\ Thomas & yes}

The following construction is classical and well-known in the smooth setup, see
\cite{Ati57}, and has been studied for various questions in Kähler geometry, for
example in \cite{MR1163733, MR1959581, GrebWong}.

\begin{construction}[Extensions induced by $ℚ$-Cartier divisors]\label{cons:DExt}
  Let $X$ be a normal variety and $D$ be a $ℚ$-Cartier Weil divisor on $X$.
  Choose any integer $m ∈ ℕ^{>0}$ such that $m·D$ is Cartier, and let
  $c_1\bigl(𝒪_X(m·D)\bigr)$ in $H¹\bigl(X,\, Ω¹_X \bigr)$ be the first Chern
  class of the locally free sheaf $𝒪_X(m·D)$.  Spelled out: if
  $\mathscr{U} = (U_α)_{α ∈ I}$ is a trivialising covering for $𝒪_X(m·D)$ with
  transition functions $g_{αβ} ∈ 𝒪^*_X(U_{αβ})$, then the first Chern class
  $c_1 \bigl(𝒪_X(m·D)\bigr)$ is the image of the Čech cohomology class
  \[
    \Bigl[\Bigl(\frac{dg_{αβ}}{g_{αβ}}\Bigr)_{α, β ∈ I}\Bigr] %
    ∈ \check{H}¹\bigl(\mathscr{U}, \, Ω¹_X \bigr)
  \]
  in $H¹\bigl(X,\, Ω¹_X \bigr)$.  We can then assign to $D$ the cohomology class
  \[
    c_1(D) := \frac{1}{m}·c_1\bigl(𝒪_X(m·D)\bigr) ∈ H¹\bigl(X,\, Ω¹_X \bigr).
  \]
  Observe that the class $c_1(D)$ is independent of the choice of $m$.  Using
  the canonical identification
  $H¹\bigl(X,\, Ω¹_X \bigr) ≅ \Ext¹\bigl(𝒪_X,\, Ω¹_X \bigr)$ and the standard
  interpretation of first Ext-groups, \cite[III.Prop.~6.3]{Ha77} and
  \cite[Ex.~A.3.26]{E95}, in this way we obtain the isomorphism class of an
  extension
  \begin{equation}\label{eq:OmegaExtension}
    0 → Ω_X¹ → 𝒲_D → 𝒪_X → 0.
  \end{equation}
  We refer to \eqref{eq:OmegaExtension} as the \emph{extension of $Ω_X¹$ by
    $𝒪_X$ induced by $D$}.  Functoriality of Ext-groups with respect to
  restriction to open subsets implies that \eqref{eq:OmegaExtension} splits on
  any affine open subset of $X$.  In particular, we find that the extension
  defined by $D$ is locally splittable.  This allows to dualise
  \eqref{eq:OmegaExtension} in order to obtain a likewise locally splittable
  extension of $𝒯_X$ by $𝒪_X$,
  \begin{equation}\label{eq:tangExtension}
    0 → 𝒪_X → ℰ_D → 𝒯_X → 0.
  \end{equation}
\end{construction}

\begin{rem}[Reflexivity of $ℰ_D$]\label{rem:refl}
  In the setting of Construction~\ref{cons:DExt}, recall that $𝒯_X$ is
  reflexive.  This implies in particular that the middle term $ℰ_D$ of the
  locally splittable Extension~\eqref{eq:tangExtension} is likewise reflexive.
  Alternatively, use that $ℰ_D ≅ \sHom(𝒲_D, 𝒪_X)$ to reach the same conclusion.
\end{rem}

\begin{rem}[Functoriality in morphisms]\label{rem:func}
  In the setting of Construction~\ref{cons:DExt}, if $γ : Y → X$ is any morphism
  of normal varieties, then $c_1(γ^*D)$ is the image of $c_1(D)$ under the
  obvious map
  \[
    H¹\bigl( \diff γ \bigr) : H¹\bigl(X,\, Ω¹_X \bigr) → H¹\bigl(Y,\, Ω¹_Y
    \bigr).
  \]
  If $γ$ is étale, this implies that $𝒲_{γ^*D} ≅ γ^* 𝒲_D$ and
  $ℰ_{γ^*D} ≅ γ^* ℰ_D$.
\end{rem}

\begin{rem}[Geometric realisation]
  Over $X_{\reg}$, where $𝒪_X(D)$ is locally free with associated line bundle
  $L → X_{\reg}$, the sequence \eqref{eq:tangExtension} is just the Atiyah
  extension \cite[Thm.~1]{Ati57}.  It coincides with the natural exact sequence
  \[
    0 → 𝒪_{X_{\reg}} \overset{e}{→} 𝒯_L(-\log X_{\reg})|_{X_{\reg}} →
    𝒯_{X_{\reg}} → 0,
  \]
  where we identified $X_{\reg}$ with the zero section of $L$, and $e$ is given
  by the vector field coming from the natural $ℂ^*$-action; see also
  \cite[p.~407]{MR1163733}.  Similarly, if $D$ is Cartier with associated line
  bundle $L$ over $X$, then \eqref{eq:tangExtension} is
  \[
    0 → 𝒪_X \overset{e}{→} 𝒯_L(-\log X)|_X → 𝒯_X → 0.
  \]
  To see this, use the fact that $L → X$ is locally trivial to see that $L$ is
  normal, so that $𝒯_L(-\log X)$ is reflexive, and then again to obtain that
  $𝒯_{L}(-\log X)|_X$ stays reflexive.
\end{rem}

\begin{defn}[The canonical extension]\label{defn:tce}
  If $X$ is a normal projective variety such that $K_X$ is $ℚ$-Cartier, we apply
  Construction~\ref{cons:DExt} to the divisor $D = -K_X$, in order to obtain a
  locally splittable extension with reflexive middle term,
  \begin{align}\label{eq:canonicalExtension}
    0 → 𝒪_X → ℰ_X → 𝒯_X → 0.
  \end{align}
  We refer to \eqref{eq:canonicalExtension} as the \emph{canonical extension of
    $𝒯_X$ by $𝒪_X$}.  Abusing language somewhat, we will also refer to the sheaf
  $ℰ_X$ as the \emph{canonical extension}.
\end{defn}

\begin{rem}[Functoriality in morphisms]\label{rem:func2}
  In the setting of Definition~\ref{defn:tce}, given any quasi-étale morphism
  $γ : Y → X$ of normal varieties, then $ℰ_Y ≅ γ^{[*]} ℰ_X$ -- restrict to
  $X_{\reg}$, use Remarks~\ref{rem:refl} and \ref{rem:func}, as well as the fact
  that two reflexive sheaves agree if they agree on a big open set.
\end{rem}

\begin{rem}[The $ℚ$-Fano case]\label{rem:QFngs}
  If $X$ is a $ℚ$-Fano variety, then by definition $-K_X$ is $ℚ$-ample and as a
  consequence, the canonical extension is highly non-trivial.  In particular, if
  $C ⊂ X_{\reg}$ is any smooth projective curve $C$ inside the smooth locus of
  $X$, then the restricted sequence
  \[
    0 → 𝒪_C → ℰ_X|_C → 𝒯_X|_C → 0
  \]
  is exact and not globally splittable.
\end{rem}

%
%
\svnid{$Id: 05-proofs.tex 747 2021-04-01 07:12:38Z kebekus $}

\section{Proofs of the main results}
\subversionInfo

\subsection{Proof of Proposition~\ref*{prop:pflatnessCriterion} (``Criterion for projective flatness'')}
\label{sec:potf0}
\approvals{Daniel & yes\\Stefan & yes\\ Thomas & yes}

We consider the endomorphism sheaf $ℰ := \sEnd(ℱ, ℱ)$.  The sheaf $ℰ$ is then
likewise reflexive, semistable with respect to $H$ by \cite[Prop.~4.4]{GKP15},
and its first $ℚ$-Chern class vanishes.  Equation~\eqref{eq:xxA} will then
guarantee that
\[
  \what{ch}_1(ℰ)·[H]^{n-1} = 0 \quad\text{and}\quad \what{ch}_2(ℰ)·[H]^{n-2} =
  0.
\]
Indeed, vanishing of the first term has already been mentioned above.  Regarding
the second term we therefore observe that by \cite[Thm.~3.13.2]{GKPT19b} it
suffices to show that $\what{c}_2(ℰ|_S) = 0$, where $S$ is a general, and hence
klt, complete intersection surface for a suitable multiple of $H$.  We notice
that for such a surface $S$ we have $ℰ|_S ≅ \sEnd(ℱ|_S, ℱ|_S)$ and that, by
\cite[Thm.~3.13.2]{GKPT19b} again, equation~\eqref{eq:xxA} implies that
$\what{Δ}(ℱ|_S) = 0$, where $\what{Δ}$ is the $ℚ$-Bogomolov discriminant, see
\cite[Def.~3.15]{GKPT19b}.  The standard calculus for Chern classes of vector
bundles together with \cite[Thm.~3.13.1]{GKPT19b}\footnote{Especially, see the
  preprint version of \emph{loc.~cit.}, where the construction of
  $\mathbb{Q}$-Chern classes is carried out in detail.} then implies that
$\what{c}_2(ℰ|_S) = \what{Δ} (ℱ|_S) = 0$, as desired.  We are hence in the
position to apply \cite[Thm.~1.4]{LT18} to see that $ℰ|_{X_{\reg}}$ is locally
free and carries a flat holomorphic connection; Proposition~\ref{prop:3-5} then
gives the claim.  \qed

\subsection{Proof of Proposition~\ref*{prop:Fano1} (``Miyaoka-Yau Inequality and Semistable Canonical Extension'')}
\label{sec:potf1}
\approvals{Daniel & yes \\Stefan & yes\\ Thomas & yes}

We have already remarked that Inequality~\eqref{eq:MY_arbitrary_H} is equivalent
to the $ℚ$-Bogomolov-Gieseker Inequality for the reflexive sheaf $ℰ_X$,
\begin{equation}\label{eq:B1}
  \what{Δ}(ℰ_X)·[H]^{n-2} ≥ 0
\end{equation}
where $\what{Δ}$ is the $ℚ$-Bogomolov discriminant, as introduced in
\cite[Def.~3.15]{GKPT19b}.  To begin the proof in earnest, let $m ∈ ℕ$ be
sufficiently large such that $m·H$ is very ample, let
\[
  (H_1, …, H_{n-2}) ∈ |m·H|^{⨯ (n-2)}
\]
be a general tuple of hypersurfaces, and let $S := H_1 ∩ ⋯ ∩ H_{n-2}$ be the
associated complete intersection surface in $X$.  Recall that…
\begin{itemize}
\item the surface $S$ is a klt space, \cite[Lem.~5.17]{KM98},
     
\item the restricted sheaf $ℰ_S := ℰ_X|_S$ is reflexive,
  \cite[Thm.~12.2.1]{EGA4-3},

\item the sheaf $ℰ_S$ is semistable with respect to $H_S := H|_S$,
  \cite[Thm.~1.2]{Flenner84}, and

\item Inequality~\eqref{eq:B1} is equivalent to $\what{Δ}(ℰ_S) ≥ 0$,
  \cite[Thm.~3.13.2]{GKPT19b}.
\end{itemize}
Next, recall from \cite[Thm.~3.13.(1)]{GKPT19b} that there exists a normal
surface $\what{S}$ and a finite cover $γ : \what{S} → S$ such that…
\begin{itemize}
\item the reflexive pull-back $ℰ_{\what{S}} := γ^{[*]} ℰ_S$ is locally free, and

\item the desired Inequality $\what{Δ}(ℰ_S) ≥ 0$ is equivalent to
  $Δ(ℰ_{\what{S}}) ≥ 0$, where $Δ$ is the standard Bogomolov discriminant.
\end{itemize}
Also, recall from \cite[Lem.~3.2.2]{MR2665168} that $ℰ_{\what{S}}$ is semistable
with respect to the ample divisor $H_{\what{S}} := γ^*H$.  Finally, let
$π : \wtilde{S} → \what{S}$ be a resolution of singularities.  Then,
$ℰ_{\wtilde{S}} := π^* ℰ_{\what{S}}$ is semistable with respect to the nef
divisor $H_{\wtilde{S}} := π^* H_{\what{S}}$, \cite[Prop.~2.7 and
Rem.~2.8]{GKP15} and $Δ(ℰ_{\wtilde{S}}) = Δ(ℰ_{\what{S}})$.  But
$Δ(ℰ_{\what{S}}) ≥ 0$ by \cite[Thm.~5.1]{GKP15}, which ends the proof of
Proposition~\ref{prop:Fano1}.  \qed

\subsection{Proof of Theorem~\ref*{thm:Fano2} (``Quasi-étale uniformisation if $-K_X$ is nef'')}
\label{sec:potf2}
\approvals{Daniel & yes\\Stefan & yes\\ Thomas & yes}

To prepare for the proof, observe that if $γ: Y → X$ is any quasi-étale cover,
then the following holds.\CounterStep
\begin{enumerate}
\item\label{il:t1} Recall from \cite[Prop.~5.20]{KM98} that $Y$ is again klt and
  notice that $-K_Y = γ^*(-K_X)$ is also $ℚ$-nef.
  
\item\label{il:t2} The calculus of $ℚ$-Chern classes, \cite[Lem.~3.16]{GKPT19b},
  shows that equality holds in the $ℚ$-Miyaoka-Yau inequality for $Y$ (with
  respect to the ample divisor $γ^*H$) if and only if it holds for $X$ (with
  respect to $H$).
  
\item\label{il:t3} We have seen in Remark~\ref{rem:func2} that
  $ℰ_Y ≅ γ^{[*]} ℰ_X$.  Recalling that the reflexive sheaf $γ^{[*]} ℰ_X$ is
  semistable with respect to $γ^* H$ if and only if the sheaf $γ^* ℰ_X/\tor$ is
  semistable with respect to $γ^* H$, it follows from
  \cite[Lem.~3.2.2]{MR2665168} that $ℰ_Y$ is semistable with respect to $γ^* H$
  if and only if $ℰ_X$ is semistable with respect to $H$.
\end{enumerate}

\subsection*{Direction \ref{il:s2} $⇒$ \ref{il:s1}}
\approvals{Daniel & yes \\Stefan & yes\\ Thomas & yes}

Assume that $X$ is of the form $X = Y/G$, where $Y$ is the projective space or
an Abelian variety, and where $G < \Aut(Y)$ is a finite group acting fixed-point
free in codimension one.  Denote the quotient map by $γ : Y → X$, observe that
$γ$ is quasi-étale and let $H ∈ \Div(X)$ be any ample Cartier divisor.  No
matter whether $Y$ is $ℙ^n$ or $Y$ is Abelian, it is well-known in either case
that the quotient space $X$ has klt singularities, that equality holds in the
Miyaoka-Yau Inequality for $γ^*H$ and that the canonical extension $ℰ_Y$ is
semistable with respect to $γ^*H$.  Since $γ$ is quasi-étale,
Items~\ref{il:t1}--\ref{il:t3} will then imply the same for $X$ and for the
divisor $H$.

\subsection*{Direction \ref{il:s1} $⇒$ \ref{il:s2}}
\approvals{Daniel & yes \\Stefan & yes\\ Thomas & yes}
  
Assume that $X$ satisfies the assumptions in \ref{il:s1} and choose a Galois,
maximally quasi-étale cover $γ : Y → X$ is, as provided by
\cite[Thm.~1.5]{GKP13}.  Items~\ref{il:t1}--\ref{il:t3} guarantee that $Y$
reproduces the Assumptions~\ref{il:s1}.  Replacing $X$ by $Y$, we may (and will)
therefore assume from now on that $X$ itself is maximally quasi-étale.  With
these additional assumptions, we aim to show that $X$ is isomorphic to $ℙ^n$ or
to an Abelian variety, from which the main claim follows.

As $ℰ_X$ is $H$-semistable and equality holds in \eqref{eq:MY_arbitrary_H},
Proposition~\ref{prop:pflatnessCriterion} (``Criterion for projective
flatness'') implies that $ℰ_X|_{X_{\reg}}$ is projectively flat.  As a
consequence, we may apply Corollary~\ref{cor:3-9} and find that $\sEnd(ℰ_X)$ is
locally free and flat.  By construction of $ℰ_X$ as a locally splittable
extension, we can then locally write
\[
  ℰ_X = 𝒪_X ⊕ 𝒯_X \quad \text{and} \quad \sEnd(ℰ_X) = 𝒪_X ⊕ 𝒯_X ⊕ Ω^{[1]}_X ⊕
  \sEnd(𝒯_X).
\]
In particular, we find that the locally free sheaf $\sEnd(ℰ_X)$ locally contains
$𝒯_X$ as a direct summand, which is therefore also locally free.  The positive
solution of the Lipman-Zariski conjecture for klt spaces,
\cite[Thm.~6.1]{GKKP11} or \cite[Thm.~3.8]{Druel13a}, then already implies that
$X$ is smooth.

There is more that we can say.  Assumptions \ref{il:s1} also imply that the
$ℚ$-twisted bundle\footnote{see \cite[Sect.~6.2]{Laz04-II} for the notation used
  here}
\[
  ℰ_X \bigl\langle {\textstyle \frac{1}{n+1}}·[\det ℰ^*_X] \bigr \rangle %
  = ℰ_X \bigl\langle {\textstyle \frac{1}{n+1}}·[K_X]\bigl\rangle
\]
is nef, \cite[Chapt.~IV, Thm.~4.1 and Chap.~II, Def.~6.4]{Nakayama04}, and then
so is the quotient $𝒯_X \bigl\langle \frac{1}{n+1}·[K_X] \bigr\rangle$, see
\cite[Thm.~6.2.12]{Laz04-II}.  Given that $K_X$ is anti-nef, we find that $𝒯_X$
is already nef.  This has consequences: replacing $X$ by a suitable étale cover,
we may assume without loss of generality that the Albanese map
$\alb(X) : X → \Alb(X)$ is a submersion and that its fibres are connected Fano
manifolds whose tangent bundles are likewise nef, \cite[Prop.~3.9 and
Thm.~3.14]{DPS94}.

If $\alb(X)$ has zero-dimensional fibres, then $\alb(X)$ is étale, the variety
$X$ is therefore Abelian, and the proof ends here.  We will therefore assume of
the remainder of the proof that $\alb(X)$ has positive-dimensional fibres, and
let $F ⊆ X$ be such a fibre.  Choose any curve inside $F$ and consider its
normalisation, say $η : C → F$.  Knowing that $F$ is Fano, we find that
$\deg η^*(-K_X) =\deg η^*(-K_F) > 0$, and nefness of
$𝒯_X\langle \frac{1}{n+1}·[K_X]\rangle$ implies that $η^* 𝒯_X$ is already ample,
\cite[Prop.~6.2.11]{Laz04-II}.  As one consequence, we see that $F = X$, for
otherwise there would be a non-trivial surjection
\[
  \begin{tikzcd}
    η^* 𝒯_X \ar[r, two heads] & η^*\sN_{F/X} ≅ 𝒪_{C}^{⊕ \codim_X F},
  \end{tikzcd}
\]
which cannot exist if $η^* 𝒯_X$ is ample.  But then $X$ is a Fano manifold and
hence simply connected, see \cite[Thm.~1.1]{TakayamaSimpleConnectedness} and
references there.  Together with Proposition~\ref{prop:3-3x}, we conclude that
there exists a (Cartier) divisor $D ∈ \Div (X)$ such that
$ℰ_X ≅ 𝒪_X(D)^{⊕ n+1}$.  Taken together with the exact sequence
\eqref{eq:canonicalExtension}, this implies that
\[
  𝒪_X\bigl((n+1)·D\bigr) ≅ \det ℰ_X ≅ 𝒪_X(-K_X).
\]
We find that $c_1(X) = (n+1)·c_1(D)$ and that $D$ is ample.  The classic
Kobayashi-Ochiai Theorem, \cite[Cor.\ on p.~32]{KO73}, then shows that
$X ≅ ℙ^n$.  This concludes the proof of Theorem~\ref{thm:Fano2}.  \qed

%
%
\svnid{$Id: 06-stability.tex 747 2021-04-01 07:12:38Z kebekus $}

\section{Stability criteria for the canonical extension}
\label{ssec:soce}
\approvals{Daniel & yes \\Stefan & yes\\ Thomas & yes}

In order to apply Proposition~\ref{prop:Fano1} we need to find criteria
guaranteeing that the canonical extension $ℰ_X$ of a given variety $X$ is
semistable.

\subsection{Destabilising subsheaves in $ℰ_X$ when $𝒯_X$ is semistable}
\approvals{Daniel & yes \\Stefan & yes\\ Thomas & yes}

It is known that the tangent sheaves of many Fanos are semistable with respect
to the anti-canonical polarisation\footnote{It was in fact conjectured until
  recently that the tangent bundle of every Fano manifold $X$ with Picard number
  $ρ(X)=1$ is stable.  While this conjecture was shown in an impressive number
  of cases, it was recently disproved by Kanemitsu.  We refer the reader to
  Kanemitsu's paper \cite{Kanemitsu20} for details.}, and it seems natural to
ask about semistability of $ℰ_X$ for these varieties.  While even in these cases
one cannot hope that $ℰ_X$ will always be semistable\footnote{See
  \cite[Thm.~3.2]{MR1163733} for an elementary example where $𝒯_X$ is semistable
  while $ℰ_X$ is not.}, we describe potential destabilising subsheaves in $ℰ_X$.

\begin{notation}
  Throughout the present Section~\ref{ssec:soce} we will mostly be concerned
  with Fano manifolds.  To keep notation short, semistability is (unless
  otherwise stated) always understood with respect to the canonical class, and
  we denote the slope with respect to the anticanonical class by $μ(•)$ rather
  than the more correct $μ_{-K_X}(•)$.
\end{notation}

\begin{prop}\label{prop:A}
  Let $X$ be a $ℚ$-Fano variety of dimension $n$.  Suppose that $𝒯_X$ is
  semistable with respect to $-K_X$ and that $𝒮 ⊂ ℰ_X$ is a proper, saturated
  subsheaf with $μ(𝒮) ≥ μ(ℰ_X)$.  Then, the composed map
  $𝒮 ↪ ℰ_X \twoheadrightarrow 𝒯_X$ is injective, and there are inequalities
  \begin{equation}\label{ineq}
    μ(ℰ_X) ≤ μ(𝒮) ≤ μ(𝒯_X).
  \end{equation}
\end{prop}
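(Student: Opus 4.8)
The plan is to analyse the kernel and image of the composed morphism $φ : \sS → \sE_X \onto \sT_X$ and to play the hypothesis $μ(\sS) ≥ μ(\sE_X)$ against the slope bound that semistability of $\sT_X$ forces on the image. First I would record the two ambient slopes. Writing $d := [-K_X]^n > 0$ and reading off first Chern classes from the defining sequence $0 → \sO_X → \sE_X → \sT_X → 0$, so that $\det \sE_X = \det \sT_X = \sO_X(-K_X)$ while $\rank \sE_X = n+1$ and $\rank \sT_X = n$, one finds
\[
  μ(\sE_X) = \frac{d}{n+1} \quad\text{and}\quad μ(\sT_X) = \frac{d}{n},
\]
in particular $μ(\sE_X) < μ(\sT_X)$. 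Throughout I set $\deg(\sF) := \what{c}_1(\sF)·[-K_X]^{n-1}$, so that $μ(\sF) = \deg(\sF)/\rank(\sF)$ and $\deg$ is additive in short exact sequences.

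Next I would introduce $\sK := \ker φ$ and $\sQ := φ(\sS) ⊆ \sT_X$, giving $0 → \sK → \sS → \sQ → 0$. Since $\sK$ consists of the sections of $\sS$ dying in $\sT_X$, it is a subsheaf of the image of $\sO_X$ in $\sE_X$, hence of $\sO_X$; thus $\rank \sK ∈ \{0,1\}$, and if $\sK ≠ 0$ then $\what{c}_1(\sK) = -[E]$ for an effective divisor $E$, whence $\deg \sK ≤ 0$ by ampleness of $-K_X$. Semistability of $\sT_X$ applied to the subsheaf $\sQ$ gives $\deg \sQ ≤ (\rank \sQ)·μ(\sT_X)$, which also holds trivially when $\sQ = 0$.

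The crux is injectivity, which I would prove by contradiction. Suppose $\sK ≠ 0$, so $\rank \sK = 1$ and $\rank \sQ = r-1$ where $r := \rank \sS$. Additivity of degree and the two bounds above yield $\deg \sS = \deg \sK + \deg \sQ ≤ (r-1)·μ(\sT_X)$, i.e. $μ(\sS) ≤ \frac{r-1}{r}·\frac{d}{n}$. Combined with the hypothesis $μ(\sS) ≥ μ(\sE_X) = \frac{d}{n+1}$ and after cancelling $d > 0$, this forces $\frac{1}{n+1} ≤ \frac{r-1}{rn}$, which rearranges (cross-multiplying, both denominators positive) to $r ≥ n+1$. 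But $\sS$ is a \emph{proper}, \emph{saturated} subsheaf of the torsion-free sheaf $\sE_X$ of rank $n+1$: saturatedness means $\sE_X/\sS$ is torsion-free, so if $r = n+1$ this quotient would have rank $0$ and hence vanish, contradicting properness. Therefore $r ≤ n$, contradicting $r ≥ n+1$. Hence $\sK = 0$ and $φ$ is injective.

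Finally, with $\sK = 0$ the morphism $φ$ identifies $\sS$ with the subsheaf $\sQ ⊆ \sT_X$, so semistability of $\sT_X$ gives $μ(\sS) = μ(\sQ) ≤ μ(\sT_X)$, the right-hand inequality in \eqref{ineq}; the left-hand inequality is the hypothesis. I do not anticipate a genuine obstacle, since the whole argument reduces to one elementary slope inequality; the only points needing care are the rank bound $r ≤ n$ extracted from properness together with saturatedness, and the sign $\deg \sK ≤ 0$ coming from $\sK ⊆ \sO_X$ and ampleness of $-K_X$.
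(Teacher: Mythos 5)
Your argument is correct and is essentially the paper's own proof: both reduce to the observation that a nontrivial kernel of $\sS → \sT_X$ sits inside $\sO_X$ and hence has non-positive degree, so that the hypothesis $μ(\sS) ≥ μ(\sE_X)$ forces the image in $\sT_X$ to violate semistability (you phrase the final contradiction as a rank bound $r ≥ n+1$ versus $r ≤ n$, the paper as a slope bound $μ(\Image ψ) > μ(\sT_X)$, but these are the same computation). Your write-up just makes explicit the details the paper leaves implicit, including the use of properness and saturation to get $\rank \sS ≤ n$.
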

\begin{proof}
  If the composed map $ψ: 𝒮 → 𝒯_X$ is not injective, then $\ker ψ$ is a subsheaf
  of $𝒪_X$ of rank one and the inequality $μ(𝒮) ≥ μ(ℰ_X)$ yields
  $μ(\Image ψ) > μ(𝒯_X)$, thus contradicting the semistability of $𝒯_X$.  It
  follows that the composed map $ψ$ \emph{is} injective and
  Inequalities~\eqref{ineq} are simply the assumption and the semistability of
  $𝒯_X$.
\end{proof}

We now restrict our attention to Fano manifolds with $ρ(X) = 1$, where the first
Chern classes can and will be considered as numbers; for instance,
$c_1(-K_X) = r$ is the index of $X$.

\begin{prop}\label{prop:B}
  In the setting of Proposition~\ref{prop:A} assume additionally that $X$ is
  smooth with $ρ(X) = 1$, and let $r$ be the index of $X$.  Let $m$ be the rank
  of $𝒮$.  Then, $m < n$ and
  \begin{equation}\label{ineq2}
    \frac{r·m}{n+1} ≤ c_1(𝒮) ≤ \frac{r·m}{n}.
  \end{equation}
\end{prop}
\begin{proof}
  The Inequalities~\eqref{ineq2} are simply reformulations of \eqref{ineq}.  To
  show the bound for $m$, we argue by contradiction and assume that $m = n$.  By
  Proposition~\ref{prop:A}, $𝒮$ is then a reflexive subsheaf of $𝒯_X$.  If
  $𝒮 = 𝒯_X$, then the sequence
  \[
    0 → 𝒪_X → ℰ_X → 𝒯_X → 0
  \]
  is globally split, contradicting Remark~\ref{rem:QFngs}.  It follows that
  $𝒮 ⊊ 𝒯_X$ and that $\det 𝒮 ⊊ 𝒪_X(-K_X)$.  In other words,
  $c_1(𝒮) < c_1(-K_X) = r$.  The left inequality in \eqref{ineq2} will then read
  \[
    \frac{r·n}{n+1} ≤ c_1(𝒮) ≤ r-1,
  \]
  which implies that $r ≥ n+1$.  It follows, for instance by
  \cite[Thm.~1.1]{Kebekus02b}, that $X \simeq ℙ^n$, where $ℰ_X$ is known to be
  stable.  This contradicts the defining property of $𝒮$.
\end{proof}

\begin{cor}\label{cor1}
  Let $X$ be a Fano manifold with $ρ(X) = 1$.  If $𝒮 ⊂ ℰ_X$ is destabilising,
  then $\rank(𝒮) ≥ 2$.
\end{cor}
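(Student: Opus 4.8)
The plan is to exclude rank-one destabilising subsheaves; the bound $\rank(𝒮) ≥ 2$ then follows immediately. Throughout I read \emph{destabilising} in the strict sense $μ(𝒮) > μ(ℰ_X)$, which is essential here: over $X ≅ ℙ^n$ the canonical extension is the Euler bundle $ℰ_X ≅ 𝒪_{ℙ^n}(1)^{⊕(n+1)}$, which is semistable and does contain rank-one subsheaves of slope exactly $μ(ℰ_X)$. So suppose for contradiction that $𝒮 ⊂ ℰ_X$ is destabilising of rank one. Replacing $𝒮$ by its saturation in $ℰ_X$ --- which has the same rank and slope at least $μ(𝒮)$, hence is again destabilising --- I may assume $𝒮$ saturated. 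As $X$ is smooth and therefore locally factorial, a saturated rank-one subsheaf of the reflexive sheaf $ℰ_X$ is reflexive of rank one, hence invertible; write $𝒮 ≅ 𝒪_X(A)$. Since $ρ(X) = 1$ I identify $c_1(𝒪_X(A))$ with an integer $a$, so that $μ(𝒮) = a$ and $μ(ℰ_X) = \frac{r}{n+1}$. The destabilising hypothesis now reads $a > \frac{r}{n+1} > 0$; as $a ∈ ℤ$ this forces $a ≥ 1$, and in particular $𝒪_X(A)$ is ample.

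Next I would examine the composition $ψ : 𝒮 ↪ ℰ_X \twoheadrightarrow 𝒯_X$ induced by the canonical extension \eqref{eq:canonicalExtension}. If $ψ = 0$, then $𝒮$ factors through the kernel $𝒪_X$, giving a nonzero morphism $𝒪_X(A) → 𝒪_X$, i.e. a nonzero section of $𝒪_X(-A)$; this forces $a ≤ 0$, contradicting $a ≥ 1$. Hence $ψ ≠ 0$, so that $\Hom(𝒪_X(A), 𝒯_X) = H⁰\bigl(X, 𝒯_X ⊗ 𝒪_X(-A)\bigr)$ is non-zero; equivalently, $𝒯_X$ contains the ample line bundle $𝒪_X(A)$ as a subsheaf. (This non-vanishing plays the role of the injectivity in Proposition~\ref{prop:A}, but requires no semistability assumption on $𝒯_X$.)

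The decisive input is then Wahl's cohomological characterisation of projective space \cite{Wahl83}: if $L$ is an ample line bundle on a smooth projective variety $X$ with $H⁰(X, 𝒯_X ⊗ L^{-1}) ≠ 0$, then $(X, L) ≅ (ℙ^n, 𝒪_{ℙ^n}(1))$. Applied with $L = 𝒪_X(A)$ this yields $X ≅ ℙ^n$ and $a = 1$, whence the index is $r = n+1$ and $μ(ℰ_X) = \frac{r}{n+1} = 1 = a = μ(𝒮)$ --- in direct contradiction with the strict inequality $a > μ(ℰ_X)$. Therefore no rank-one destabilising subsheaf can exist. I expect the only genuine obstacle to be the borderline numerics: the slope estimates of the preceding proposition by themselves still permit the quadric case $r = n$, $a = 1$, and it is precisely Wahl's theorem, rather than any purely numerical bound, that disposes of it. It is perhaps worth noting that the argument nowhere uses semistability of $𝒯_X$.
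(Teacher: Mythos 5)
Your argument is correct and follows the same skeleton as the paper's proof: reduce a putative rank-one destabiliser to an ample line subbundle of $\mathcal{T}_X$, invoke Wahl's theorem \cite{Wah83} to force $X \cong \mathbb{P}^n$, and rule that case out. The one genuine difference lies in how you obtain the injection $\mathcal{S} \hookrightarrow \mathcal{T}_X$: the paper quotes Proposition~\ref{prop:A}, whose hypotheses include semistability of $\mathcal{T}_X$ (the standing theme of that subsection), whereas you observe that a rank-one destabiliser has positive slope, is ample after saturation since $\rho(X)=1$, and therefore cannot map nontrivially to the kernel $\mathcal{O}_X$ of $\mathcal{E}_X \to \mathcal{T}_X$. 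This buys independence from any stability assumption on $\mathcal{T}_X$ and matches the literal statement of the corollary, which does not mention such an assumption; your explicit remark that ``destabilising'' must be read strictly --- since $\mathcal{E}_{\mathbb{P}^n} \cong \mathcal{O}_{\mathbb{P}^n}(1)^{\oplus (n+1)}$ contains rank-one subsheaves of equal slope --- also makes precise a point the paper leaves implicit. One microscopic caveat: Wahl's theorem pins down $L \cong \mathcal{O}_{\mathbb{P}^n}(1)$ only for $n \geq 2$ (on $\mathbb{P}^1$ the line bundle $\mathcal{O}(2)$ also satisfies the non-vanishing), so in dimension one you should instead derive the contradiction directly from $\mathcal{S} \subset \mathcal{E}_{\mathbb{P}^1} \cong \mathcal{O}(1)^{\oplus 2}$, which admits no rank-one subsheaf of slope greater than $1$; this does not affect the substance.
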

\begin{proof}
  If $𝒮$ had rank one, then $𝒮$ would be an ample line bundle which at the same
  time is a subsheaf of $𝒯_X$ by Proposition~\ref{prop:A}.  But then $X ≅ ℙ^n$
  by Wahl's theorem, \cite[Thm.~1]{Wah83}, so that again a contradiction has
  been reached.
\end{proof}

\begin{cor}\label{cordim3}
  Let $X$ be a Fano manifold of index one with $ρ(X) = 1$.  Then $ℰ_X$ is
  stable.
\end{cor}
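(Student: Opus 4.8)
The plan is to argue by contradiction and let the \emph{index-one} hypothesis collapse the Chern-class window supplied by the preceding results to one that contains no admissible integer. Throughout I use that we are in the setting of Proposition~\ref{prop:A}, so that $𝒯_X$ is semistable with respect to $-K_X$; this is what makes the upper slope bound below available.

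Suppose $ℰ_X$ were not stable. Then it carries a proper subsheaf of positive rank whose slope is at least $μ(ℰ_X)$, and replacing it by its saturation (which does not decrease the slope and keeps it proper, as the rank is unchanged and at most $n$) I obtain a proper saturated $𝒮 ⊂ ℰ_X$ with $μ(𝒮) ≥ μ(ℰ_X)$. Write $m := \rank 𝒮$. Index one enters first in a bookkeeping form: since $X$ is smooth with $ρ(X) = 1$, the line bundle $\det 𝒮$ is an integral multiple of the ample generator of $\Pic(X)$, and for index one that generator is exactly $-K_X$; hence, in the normalisation $c_1(-K_X) = r = 1$, the number $c_1(𝒮)$ is an \emph{integer}.

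Now apply the proposition refining Proposition~\ref{prop:A}, which for a destabilising saturated subsheaf yields both $m < n$ and the two-sided bound \eqref{ineq2}. Substituting $r = 1$ together with $1 ≤ m ≤ n-1$, this reads
\[
  0 < \frac{m}{n+1} ≤ c_1(𝒮) ≤ \frac{m}{n} < 1,
\]
so the integer $c_1(𝒮)$ would have to lie strictly between $0$ and $1$ --- which is impossible. (Corollary~\ref{cor1} separately rules out $m = 1$, but the interval argument already disposes of every rank at once, so it is not strictly needed here.) This contradiction shows that $ℰ_X$ has no destabilising subsheaf and is therefore stable.

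I expect the only genuine subtlety to be upstream rather than in this computation: the whole argument rests on the semistability of $𝒯_X$, which is what licenses Proposition~\ref{prop:A} and hence the upper estimate $c_1(𝒮) ≤ rm/n$ coming from the embedding $𝒮 ↪ 𝒯_X$. Granting that, the role of the index-one hypothesis is transparent --- it is precisely the condition that pushes the window $[\,rm/(n+1),\, rm/n\,]$ below the first positive integer. For $r ≥ 2$ this window widens and may contain an integer, consistent with the fact that $ℰ_X$ need not be stable in higher index.
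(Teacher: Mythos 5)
Your argument is essentially the paper's, just with the final contradiction packaged differently: the paper's proof is a one-liner that invokes stability of $𝒯_X$ and Proposition~\ref{prop:A} directly, whereas you route the numerics through the refinement of Proposition~\ref{prop:A} (the bound $m<n$ together with \eqref{ineq2}) and observe that for $r=1$ the window $\bigl[\frac{m}{n+1},\frac{m}{n}\bigr]$ contains no integer. That computation is correct, and it has the small advantage of needing only \emph{semi}stability of $𝒯_X$ rather than stability.

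There is, however, one genuine gap, and it is exactly the one you flag and then wave through: semistability of $𝒯_X$ is a \emph{hypothesis} of Proposition~\ref{prop:A}, not a consequence of the corollary's assumptions, so saying ``we are in the setting of Proposition~\ref{prop:A}'' begs the question. For a Fano manifold of index one with $ρ(X)=1$ this input is a nontrivial theorem of Peternell--Wi\'sniewski, \cite[Prop.~2.2]{PW93} (which in fact gives stability); this citation is precisely what the paper's proof supplies and what your write-up is missing. With that reference inserted, your proof is complete. A second, cosmetic point: you should make sure the refinement of Proposition~\ref{prop:A} is actually applicable, i.e.\ that its proof of $m<n$ does not secretly use $r\geq 2$ --- it does not (for $m=n$ it derives $\frac{rn}{n+1}\leq c_1(𝒮)\leq r-1$, which for $r=1$ is already absurd), but since your whole contradiction hinges on $m\leq n-1$ it is worth saying a word about why that bound is available at index one.
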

\begin{proof}
  Suppose that $𝒮 ⊂ ℰ_X$ is destabilising.  Recall from \cite[Prop.~2.2]{PW93}
  that $𝒯_X$ is stable.  Proposition~\ref{prop:B} therefore applies to bound the
  integer $c_1(𝒮)$ as follows
  \[
    0 < \frac{1·m}{n+1} ≤ c_1(𝒮) ≤ \frac{1·m}{n} < 1.
  \]
  This is absurd.
\end{proof}

In spite of the elementary results above, the precise relation between
semistability of $𝒯_X$ and $ℰ_X$ is not clear to us.  We would like to pose the
following question.

\begin{problem}
  Let $X$ be a Fano manifold where $𝒯_X$ is (semi)stable with respect to $-K_X$.
  Suppose that $ρ(X) = 1$.  Does this guarantee that the canonical extension
  $ℰ_X$ is (semi)stable as well?
\end{problem}

\subsection{$λ$-stability}
\approvals{Daniel & yes \\Stefan & yes\\ Thomas & yes}

In his work \cite{MR1163733} on Fano manifolds carrying a Kähler-Einstein
metric, Tian introduced the following notion.

\begin{defn}[\protect{$λ$-stability, \cite[Def.~1.3]{MR1163733}}]
  Let $X$ be a normal projective variety and $H$ an ample divisor.  Let $λ > 0$
  be a real number.  We say that a torsion free coherent sheaf $ℰ$ is
  \emph{$λ$-stable with respect to $H$} if the slope inequality
  $μ_H(𝒮) < λ·μ_H(ℰ)$ holds for all saturated subsheaves $𝒮 ⊂ ℰ$ with
  $0 < \rank(𝒮) < \rank(ℰ)$.  Analogously for $λ$-semistable.
\end{defn}

\begin{example}\label{ex:Tian}
  Tian \cite[Thm.~2.1]{MR1163733} has shown that the tangent bundles of Fano
  Kähler-Einstein manifolds of dimension $n$ and with $b_2 = 1$ are
  $\frac{n}{n+1}$-semistable.
\end{example}

An elementary calculation following \cite[Prop.~1.4]{MR1163733} relates
(semi)stability of the canonical extension sheaf to $λ$-(semi)stability of
$𝒯_X$.

\begin{lem}[$λ$-stability criterion]\label{obs:lsc}
  Let $X$ be a $ℚ$-Fano variety of dimension $n$.  Suppose that the tangent
  sheaf $𝒯_X$ is $\frac{n}{n+1}$-(semi)stable.  Then, the canonical extension
  sheaf $ℰ_X$ is (semi)stable.  \qed
\end{lem}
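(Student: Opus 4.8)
The plan is to verify (semi)stability of $ℰ_X$ directly, by bounding the slope of an arbitrary saturated subsheaf and transferring the bound, through the defining sequence $0 → 𝒪_X → ℰ_X → 𝒯_X → 0$, to a subsheaf of $𝒯_X$ on which the $\frac{n}{n+1}$-(semi)stability hypothesis can be used. First I would record the numerics. Writing $μ(•) = μ_{-K_X}(•)$ and $d := (-K_X)^n > 0$, the equalities $\what{c}_1(ℰ_X) = \what{c}_1(𝒯_X) = -K_X$ together with $\rank ℰ_X = n+1$ and $\rank 𝒯_X = n$ give $μ(𝒯_X) = \frac{d}{n}$ and $μ(ℰ_X) = \frac{d}{n+1} = \frac{n}{n+1}·μ(𝒯_X)$. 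Hence the threshold $λ·μ(𝒯_X) = \frac{n}{n+1}·μ(𝒯_X)$ governing $λ$-(semi)stability of $𝒯_X$ is exactly $μ(ℰ_X)$, which is the source of the equivalence. It then suffices to show that every saturated $𝒮 ⊂ ℰ_X$ of rank $m$ with $1 ≤ m ≤ n$ has $μ(𝒮) ≤ μ(ℰ_X)$, with strict inequality in the stable case.

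The calculation proceeds by the dichotomy $𝒮 ∩ 𝒪_X = 0$ versus $𝒪_X ⊆ 𝒮$, which are the only possibilities since $𝒪_X$ is saturated of rank one. If $𝒪_X ⊆ 𝒮$, the projection $π : ℰ_X → 𝒯_X$ produces $0 → 𝒪_X → 𝒮 → 𝒢 → 0$ with $𝒢 := π(𝒮) ⊂ 𝒯_X$ of rank $m-1$ and $\what{c}_1(𝒮) = \what{c}_1(𝒢)$, so $μ(𝒮) = \frac{m-1}{m}·μ(𝒢)$; for $m ≥ 2$ the saturation of $𝒢$ is a proper subsheaf of $𝒯_X$, and $λ$-(semi)stability yields $μ(𝒢) ≤ μ(ℰ_X)$, whence $μ(𝒮) ≤ \frac{m-1}{m}μ(ℰ_X) < μ(ℰ_X)$; the case $m = 1$ forces $𝒮 = 𝒪_X$ and $μ(𝒮) = 0 < μ(ℰ_X)$. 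If instead $𝒮 ∩ 𝒪_X = 0$, then $π|_𝒮$ is an isomorphism onto $𝒮' := π(𝒮) ⊂ 𝒯_X$ of rank $m$, so $μ(𝒮) = μ(𝒮')$; when $m < n$ the saturation of $𝒮'$ is again proper in $𝒯_X$ and $λ$-(semi)stability gives $μ(𝒮) ≤ μ(ℰ_X)$. This is the elementary calculation, and it settles every subsheaf except one.

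The single remaining configuration, which I expect to be the main obstacle, is $𝒮 ∩ 𝒪_X = 0$ with $m = n$. Here $π|_𝒮$ exhibits $𝒮$ as a \emph{full-rank} subsheaf $𝒮' ⊆ 𝒯_X$, so $λ$-(semi)stability — which only constrains subsheaves of rank $< n$ — is silent, and the crude bound $μ(𝒮) = μ(𝒮') ≤ μ(𝒯_X) = \frac{d}{n}$ lies above $μ(ℰ_X)$. Dually, this configuration is a surjection $ℰ_X \onto 𝒬$ onto a reflexive rank-one sheaf $𝒬 ≅ 𝒪_X(Δ)$ with $Δ ≥ 0$ effective, such that the composite $𝒪_X → 𝒬$ is the tautological section $s ∈ H⁰(𝒪_X(Δ))$; it destabilises precisely when $Δ·(-K_X)^{n-1} < \frac{d}{n+1}$. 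The plan is to rule this out using the non-triviality and positivity of the extension class $c_1(-K_X)$. From the long exact sequence obtained by applying $\Hom(-, 𝒪_X(Δ))$ to the defining sequence, such a $𝒬$ exists only if the cup product $s · c_1(-K_X)$ vanishes in $\Ext¹(𝒯_X, 𝒪_X(Δ))$. For $Δ = 0$ this says exactly that the canonical extension splits, which is impossible by Remark~\ref{rem:QFngs}. For $Δ ≠ 0$ I would restrict to a general complete-intersection curve $C = H_1 ∩ ⋯ ∩ H_{n-1}$ cut out by large multiples of $-K_X$ and argue, using ampleness of $-K_X$, that $s · c_1(-K_X)$ cannot vanish for a divisor of such small degree.

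In the case of primary interest, smooth $X$ with $ρ(X) = 1$, this last step is transparent: a destabilising $Δ$ would be numerically $a·h$ for the ample generator $h$ and an integer $a ≥ 1$, and $Δ·(-K_X)^{n-1} < \frac{d}{n+1}$ combined with the index bound $r ≤ n+1$ (with equality only for $ℙ^n$) forces $a < 1$, hence $Δ = 0$ and the non-splitting contradiction above. Carrying out the positivity argument cleanly for an arbitrary, possibly singular, $ℚ$-Fano variety of higher Picard number is the step I anticipate will require the most care; everything else is the slope bookkeeping of the second paragraph.
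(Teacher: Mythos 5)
Your case analysis is the natural one and, with a single exception, it is complete and correct: the dichotomy $\sS \cap \sO_X = 0$ versus $\sO_X \subseteq \sS$ is legitimate for saturated $\sS$, the slope bookkeeping is accurate, and these arguments dispose of every saturated subsheaf of rank $<n$ as well as the rank-$n$ subsheaves containing $\sO_X$. (The paper prints the lemma without proof, referring only to an ``elementary calculation following Tian, Prop.~1.4'', so your second paragraph is presumably exactly the calculation intended.) You have also correctly isolated the one configuration that the hypothesis does not reach: a saturated $\sS$ of rank $n$ with $\sS \cap \sO_X = 0$, equivalently a torsion-free rank-one quotient $\sQ$ of $\sE_X$ into which $\sO_X$ injects, where one must prove $\what{c}_1(\sQ)\cdot[-K_X]^{n-1} \geq \tfrac{1}{n+1}(-K_X)^n$ and $\tfrac{n}{n+1}$-(semi)stability of $\sT_X$ is silent because the image of $\sS$ in $\sT_X$ has full rank.

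The gap is that your treatment of this corank-one case does not cover the stated generality. Writing $\sQ^{**} \cong \sO_X(\Delta)$, effectivity of $\Delta$ gives only $\Delta\cdot[-K_X]^{n-1}\geq 0$, and non-splitting of the canonical extension (Remark~\ref{rem:QFngs}) excludes only $\Delta \cdot [-K_X]^{n-1}=0$; what is required is the quantitative bound $\Delta\cdot[-K_X]^{n-1} \geq \tfrac{1}{n+1}(-K_X)^n$. Your cup-product/restriction-to-curves sketch does not produce this: on a general complete intersection curve the vanishing of $s\cdot c_1(-K_X)$ forces the obstruction class to come from a space of dimension controlled by $\deg\Delta$, which again only yields $\Delta \neq 0$, not a lower bound of the required size. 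Your Picard-number-one argument is correct but relies on the Kobayashi--Ochiai bound $r\leq n+1$, hence on smoothness --- already for a singular $\bQ$-Fano surface such as $\bP(1,1,2)$ one has $-K_X \equiv 4h$ with $h$ the primitive ample class and an effective divisor $\Delta \equiv h$ with $\Delta\cdot(-K_X) < \tfrac{1}{3}(-K_X)^2$, so the numerics alone do not close the case even for $\rho(X)=1$. As written, therefore, the proof is complete for smooth Fano manifolds of Picard number one (which is where the paper applies the lemma), but the corank-one case remains unestablished for a general $\bQ$-Fano variety; you would need either a genuinely new input tying such quotients $\sQ$ to the $\tfrac{n}{n+1}$-semistability hypothesis, or a restriction of the statement.
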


\begin{rem}
  We do not expect the sufficient criterion of Observation~\ref{obs:lsc} to be
  necessary.
\end{rem}

\subsection{Weighted complete intersections}
\approvals{Daniel & yes \\Stefan & yes\\ Thomas & yes}

We will now spell out a few situations where $\frac{n}{n+1}$-stability of $𝒯_X$
can be guaranteed.  Given a Fano manifold $X$ with $b_2(X) = 1$, or equivalently
Picard number $ρ(X) = 1$, we denote by $𝒪_X(1)$ be the ample generator of
$\Pic(X) ≅ ℤ$.  Furthermore, as usual for any coherent sheaf $ℱ$ on $X$ and
$r ∈ ℤ$ we write $ℱ(r) = ℱ ⊗ 𝒪_X(1)^{⊗ r}$, and as above we write
$ω_X ≅ 𝒪_X(-r)$ with $r$ the index of $X$.

Many of the Fano manifolds of interest to us are weighted complete intersections
in weighted projective space, in the following (restrictive) sense, e.g.~see
\cite{MR554521}.

\begin{defn}[Weighted complete intersection]
  A polarised variety $(X, L)$ of dimension $n$ --- i.e., a pair consisting of a
  variety $X$ of dimension $n$ and an ample line bundle $L$ on $X$ --- is called
  a \emph{weighted complete intersection of type $(a_1, …, a_s)$ in
    $ℙ(d_0, d_1, \dots, d_{n+s})$} if the graded algebra
  \[
    R(X,L) = \bigoplus_{k≥ 0} H⁰\bigl(X, \, L^{⊗ k} \bigr)
  \]
  has a system of generators consisting of homogeneous elements
  $ξ_0, ξ_1, \dots, ξ_{n+s}$ with $\deg ξ_j= d_j$ for all $j=0,1, \dots, n+s$
  such that the relation ideal among the $\{ξ_j \}$ is generated by $s$
  homogeneous polynomials $f_1, \dots, f_s$ with $\deg f_l = a_l$ for all
  $l = 1, \dots, s$.

  If $s=1$, we say that $(X,L)$ is a \emph{weighted hypersurface of degree $a_1$
    in $ℙ(d_0, d_1, …, d_{n+1})$}.
\end{defn}

\begin{rem}\label{rem:Fujita=Mori}
  It is noted in \cite[(3.4)]{MR554521} that the above definition is equivalent
  to the one adopted by Mori in \cite[Defn.~3.1]{MR393054}.  In particular, if
  $X$ is smooth, and $X ↪ ℙ(d_0, d_1, \dots, d_{n+r})$ is the natural
  embedding, then $ℙ(d_0, d_1, \dots, d_{n+r})$ is smooth along $X$,
  see \cite[Prop.~1.1]{MR393054}.
\end{rem}

\begin{prop}\label{prop:stab1}
  Let $X$ be a Fano manifold of dimension $n ≥ 3$ with $ρ(X) = 1$ and index
  $r ≤ n$.  Write $δ := n - r$.  Assume that $(X, 𝒪_X(1))$ is a weighted
  complete intersection with
  \begin{equation}\label{cond}
    h⁰ \bigl(X,\, Ω^p_X(p) \bigr) = 0, \quad \text{for all } 1≤ p < n - \frac{δ}{δ +1}(n+1),
  \end{equation}
  Then, $𝒯_X$ is $\frac{n}{n+1}$-semistable.
\end{prop}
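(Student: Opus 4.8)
The plan is to argue by contradiction, converting a hypothetical destabilising subsheaf of $\sT_X$ into a nonzero twisted $p$-form whose existence is ruled out by \eqref{cond}. With $ρ(X) = 1$ and $c_1$ read as the integer degree against the ample generator $\sO_X(1)$ (so that $c_1(-K_X) = r$ and $μ(\sT_X) = r/n$), the $\frac{n}{n+1}$-semistability of $\sT_X$ is exactly the assertion that every proper saturated subsheaf $\sS ⊂ \sT_X$ of rank $m$ satisfies $c_1(\sS) ≤ \frac{r·m}{n+1}$. So I would assume for contradiction that some proper saturated $\sS ⊂ \sT_X$ of rank $m$, with $1 ≤ m ≤ n-1$, violates this, i.e. $c_1(\sS) > \frac{r·m}{n+1}$, and then produce a section forbidden by the hypothesis.

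First I would form the determinant inclusion $\det \sS = \sO_X(c_1(\sS)) ↪ \Lambda^m \sT_X$, obtained by saturating the nonzero map $\Lambda^m \sS → \Lambda^m \sT_X$. Using the standard isomorphism $\Lambda^m \sT_X ≅ \Omega^{n-m}_X ⊗ \det \sT_X$ and $\det \sT_X = \sO_X(-K_X) = \sO_X(r)$, this yields, with $p := n-m$ and $t := r - c_1(\sS)$, a nonvanishing $H^0\bigl(X, \Omega^p_X(t)\bigr) ≠ 0$, where $1 ≤ p ≤ n-1$. The destabilising inequality $c_1(\sS) > \frac{r·m}{n+1}$ rewrites as $t < \frac{r(p+1)}{n+1}$, and since $r ≤ n$ this gives $t < p+1$; as $t$ is an integer, $t ≤ p$.

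Next I would invoke a Bott-type vanishing for the smooth weighted complete intersection $X$, namely $H^0\bigl(X, \Omega^p_X(k)\bigr) = 0$ for all $k < p$. Combined with the nonvanishing above, this upgrades $t ≤ p$ to the equality $t = p$, that is $c_1(\sS) = r - p = m - δ$. Substituting into $c_1(\sS) > \frac{r·m}{n+1}$ gives $m - δ > \frac{r·m}{n+1}$, which (using $n+1-r = δ+1$) rearranges to $m > \frac{δ}{δ+1}(n+1)$, equivalently $p < n - \frac{δ}{δ+1}(n+1)$. But this places $p$ precisely in the range covered by \eqref{cond}, so $H^0\bigl(X, \Omega^p_X(p)\bigr) = 0$, contradicting $H^0\bigl(X, \Omega^p_X(t)\bigr) = H^0\bigl(X, \Omega^p_X(p)\bigr) ≠ 0$. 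Hence no destabilising subsheaf exists and $\sT_X$ is $\frac{n}{n+1}$-semistable.

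The main obstacle is the Bott-type vanishing $H^0\bigl(X, \Omega^p_X(k)\bigr) = 0$ for $k < p$: this is the only input where the weighted complete intersection structure (beyond \eqref{cond}) is used, and it is what makes \eqref{cond} needed only at the boundary twist $k = p$. I would establish it from the Euler sequence $0 → \Omega^1_{\bP} → \bigoplus_j \sO_{\bP}(-d_j) → \sO_{\bP} → 0$ on $\bP = \bP(d_0, \dots, d_{n+s})$, which is smooth along $X$ by Remark~\ref{rem:Fujita=Mori}, restricted to $X$ and combined with the Koszul resolution of the conormal bundle $\bigoplus_l \sO_X(-a_l)$ and the vanishing of intermediate cohomology $H^i\bigl(X, \sO_X(j)\bigr) = 0$ for $0 < i < n$; taking exterior powers and chasing the resulting long exact sequences delivers the vanishing in the required range. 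A secondary point to handle carefully is the slope normalisation together with the integrality of $c_1(\sS)$ under $\Pic(X) ≅ \bZ$, which is exactly what promotes the strict bound $t < p+1$ to $t ≤ p$.
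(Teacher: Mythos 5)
Your argument follows the same route as the paper's: the determinant inclusion $\det\sS \into \Lambda^m\sT_X \cong \Omega^{n-m}_X(r)$ produces a nonzero section of $\Omega^p_X(t)$ with $t = r - c_1(\sS)$, a Flenner-type vanishing forces $t \geq p$, integrality of $c_1(\sS)$ together with the destabilising inequality forces $t \leq p$, and the boundary case $t = p$ is excluded by hypothesis \eqref{cond} because the destabilising inequality pushes $p$ into the range where \eqref{cond} applies. Your reorganisation (pinning down $t = p$ and then locating $p$ in the range of \eqref{cond}) is logically equivalent to the paper's two-case split according to whether $(\delta+1)q \leq \delta(n+1)$ holds, and the arithmetic checks out.

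The one genuine gap is the range in which you assert the vanishing $H^0\bigl(X,\, \Omega^p_X(k)\bigr) = 0$ for $k < p$. You need it for all $1 \leq p \leq n-1$, that is, also for destabilising subsheaves of rank $m = 1$, whereas the paper invokes Flenner's theorem \cite[Satz~8.11]{Fle81} only after reducing to rank $q \geq 2$ (so $p \leq n-2$) and disposes of rank-one subsheaves separately via Wahl's theorem, as in the proof of Corollary~\ref{cor1}. At $p = n-1$ your claimed vanishing reads $H^0\bigl(X,\, \sT_X(-j)\bigr) = 0$ for all $j \geq 2-\delta$: for $\delta = 1$ this includes $H^0\bigl(X,\, \sT_X(-1)\bigr) = 0$, which is precisely Wahl's theorem, and for $\delta \geq 2$ it includes $H^0\bigl(X,\, \sT_X\bigr) = 0$, the absence of vector fields. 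These statements are true for the varieties at hand, but they do not drop out of a routine Euler-sequence/Koszul chase with intermediate-cohomology vanishing --- the top case $\Omega^{n-1}_X \cong \sT_X \otimes \omega_X$ is exactly where such a chase degenerates, which is presumably why the paper avoids it. Either restrict your vanishing to $p \leq n-2$ and handle rank-one destabilisers by Wahl's theorem, or supply a genuine argument (for instance a Jacobian-ideal/syzygy computation) for the boundary case.
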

\begin{proof}
  We argue as in \cite[Cor.~0.3]{PW93}.  Let $ℱ ⊂ 𝒯_X$ be a saturated and hence
  reflexive subsheaf of rank $1≤ q < n$ and write $\det ℱ ≅ 𝒪_X(k)$.  Then, the
  inequality $μ(ℱ) < μ(𝒯_X) $ to be proven reads
  \begin{equation}\label{IS}
    \frac{k}{q} = μ(ℱ) ≤ \frac{n}{n+1}·μ(𝒯_X) = \frac{n}{n+1}·\frac{r}{n} = \frac{n - δ}{n+1}
  \end{equation}
  We estimate the slope $μ(ℱ)$ by relating it to the existence of twisted forms.
  Taking determinants, we obtain an inclusion $\det ℱ ⊂ Λ^q 𝒯_X$, and hence a
  non-vanishing result
  \begin{equation}\label{eq:C}
    0 \ne h⁰\bigl(X,\, Λ^q 𝒯_X ⊗ 𝒪_X(-k)\bigr) = %
    h⁰ \bigl(X,\, Ω^{n-q}_X(r-k) \bigr).
  \end{equation}
  Using the argument in the proof of Corollary~\ref{cor1} and the
  $\frac{n}{n+1}$-semistability of $𝒯_{ℙ^n}$, we see that we may assume $q ≥ 2$.
  Since $X ≅ \Proj \bigl(R(X, 𝒪_X(1) \bigr)$ is smooth, its affine cone
  $\Spec \bigl(R(X, 𝒪_X(1) \bigr)$ has an isolated singularity at the origin.
  This observation together with Remark~\ref{rem:Fujita=Mori} allows us to apply
  a theorem of Flenner on the non-existence of twisted forms,
  \cite[Satz~8.11]{Fle81},\footnote{See also \cite[Thm.~0.2(d)]{PW93}} and infer
  that $r-k ≥ n-q$.  Equivalently, we found a bound
  \begin{equation}\label{eq:A}
    μ(ℱ) = \frac{k}{q} ≤ \frac{q - δ}{q}.
  \end{equation}
  To make use of \eqref{eq:A}, consider the inequality
  \begin{equation}\label{eq:B}
    (δ+1)·q ≤ δ·(n+1).
  \end{equation}
  There are two cases.  If \eqref{eq:B} holds, then
  \[
    μ(ℱ) \overset{\text{\eqref{eq:A}}}{≤} \frac{q-δ}{q}
    \overset{\text{\eqref{eq:B}}}{≤} \frac{n - δ}{n+1},
  \]
  and Inequality~\eqref{IS} follows.  So, suppose for the remainder of this
  proof that \eqref{eq:B} does \emph{not} hold.  But then
  Assumption~\eqref{cond} implies that
  $H⁰ \bigl(X,\, Ω^{n-q}_X(n-q)\bigr) = \{0\}$.  In view of the non-vanishing
  result \eqref{eq:C}, this implies that $r-k ≥ n-q+1$.  Writing this inequality
  in terms of slopes as in \eqref{eq:A}, and then using that $q < n+1$ by
  definition, we obtain
  \[
    \frac{k}{q} ≤ \frac{q - (δ +1)}{q} < \frac{n - δ}{n+1},
  \]
  establishing \eqref{IS} and completing the proof.
\end{proof}

\begin{rem}\label{rem:extreme_indices}
  Assumption~\eqref{cond} is empty in case $X$ has index one; hence the tangent
  bundle of Fano weighted complete intersections of index one in weighted
  projective spaces is always $\frac{n}{n+1}$-semistable.  At the other end of
  possible indices, if $r=n+1$, then $X ≅ ℙ^n$, whose tangent bundle is
  $\frac{n}{n+1}$-semistable, but not $\frac{n}{n+1}$-stable, as any inclusion
  $𝒪_{ℙ^n}(1) → 𝒯_{ℙ^n}$ shows; if $r = n$, then $X$ is a (hyper)quadric,
  \cite{KO73}, and $T_X$ is $\frac{n}{n+1}$-stable.
\end{rem}

\subsection{Smooth Fano $n$-folds with Picard number one}
\approvals{Daniel & yes \\Stefan & yes\\ Thomas & yes}

In concrete situations, Assumption~\eqref{cond} is often easily verified, as we
will see now.

\begin{prop}\label{stab}
  Let $X$ be a Fano manifold of dimension $n ≥ 3$ with $ρ(X) = 1$.  If $X$ has
  index $r = n-1$, then $𝒯_X$ is $\frac{n}{n+1}$-semistable with respect to
  $-K_X$.
\end{prop}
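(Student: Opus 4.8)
The plan is to derive Proposition~\ref{stab} from Proposition~\ref{prop:stab1} once the relevant del Pezzo manifolds have been identified. First I would record the numerics: for index $r = n-1$ one has $δ = n - r = 1$, hence $\frac{δ}{δ+1}(n+1) = \frac{n+1}{2}$, and the hypothesis~\eqref{cond} of Proposition~\ref{prop:stab1} collapses to the single family of vanishings
\[
  h^0\bigl(X,\, Ω^p_X(p)\bigr) = 0 \qquad \text{for all } 1 ≤ p < \frac{n-1}{2}.
\]
Note that this range is empty for $n = 3$ and involves only $p = 1$ for $n = 4, 5$, the first genuinely two-step case occurring at $n = 6$; so only a short initial segment of twisted forms ever needs to be controlled.

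Next I would invoke the classification of Fano manifolds of dimension $n$ with $ρ(X) = 1$ and index $n-1$, that is, of del Pezzo manifolds of Picard number one, by Fujita's classification \cite{MR554521}. These are parametrised by the degree $d = \sO_X(1)^n \in \{1, \dots, 5\}$: for $1 ≤ d ≤ 4$ the variety $X$ is a weighted complete intersection in the sense used in Proposition~\ref{prop:stab1} --- namely a weighted hypersurface of degree $6$ in $ℙ(3,2,1,\dots,1)$, a weighted hypersurface of degree $4$ in $ℙ(2,1,\dots,1)$, a cubic in $ℙ^{n+1}$, or the intersection of two quadrics in $ℙ^{n+2}$ --- whereas for $d = 5$ the variety $X$ is a linear section of the Grassmannian $G(2,5)$ and exists only in the range $3 ≤ n ≤ 6$.

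For the four weighted complete intersection families the heart of the matter is to verify~\eqref{cond}. Here I would use the description of $X$ as a complete intersection with isolated cone singularity, exactly as in Proposition~\ref{prop:stab1}, and appeal to Flenner's analysis of twisted differential forms on such varieties \cite{Fle81} (see also \cite[Thm.~0.2]{PW93}), which yields $h^0\bigl(X, Ω^p_X(p)\bigr) = 0$ for $X$ not isomorphic to $ℙ^n$ throughout the range $1 ≤ p < (n-1)/2$; the point that requires attention is that~\eqref{cond} concerns twisted forms $Ω^p_X(p)$ with twist equal to the form degree, which is exactly the boundary case lying one step beyond the strict vanishing $h^0\bigl(X, Ω^p_X(t)\bigr) = 0$ for $t < p$ that drives the slope estimate in Proposition~\ref{prop:stab1}. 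Feeding this vanishing into Proposition~\ref{prop:stab1} gives $\frac{n}{n+1}$-semistability of $\sT_X$ in all four cases.

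Finally, the degree-$5$ family is not a weighted complete intersection, so Proposition~\ref{prop:stab1} does not apply and this case must be treated by hand. Since it is confined to $3 ≤ n ≤ 6$, I would re-run the slope estimate of Proposition~\ref{prop:stab1} directly on the linear sections of $G(2,5)$: the required non-vanishing bounds on $h^0\bigl(X, Ω^{n-q}_X(r-k)\bigr)$ can be obtained from the Borel--Weil--Bott theorem on $G(2,5)$ together with the Lefschetz hyperplane theorem, playing the role that Flenner's theorem plays for complete intersections. I expect this last step --- supplying a substitute for the non-existence of twisted forms on the non-complete-intersection quintic del Pezzo manifolds, together with the verification at the critical twist in the complete intersection cases --- to be the main obstacle; the remaining reductions are formal given the classification and Proposition~\ref{prop:stab1}.
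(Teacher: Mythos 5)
Your overall strategy coincides with the paper's: reduce to Fujita's classification of del Pezzo manifolds of Picard number one, feed the four weighted complete intersection families into Proposition~\ref{prop:stab1}, and treat the degree-five family (linear sections of $𝔾(2,5)$, $3 ≤ n ≤ 6$) by hand. Your numerics for $δ = 1$ and the resulting range of $p$ in Condition~\eqref{cond} are correct.

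The gap is exactly where you suspect it is, and you have not closed it. Flenner's theorem, as used inside the proof of Proposition~\ref{prop:stab1}, gives the vanishing $h^0\bigl(X, Ω^p_X(t)\bigr) = 0$ only for $t < p$; it does \emph{not} by itself yield the critical-twist vanishing $h^0\bigl(X, Ω^p_X(p)\bigr) = 0$ that Condition~\eqref{cond} demands, so writing that Flenner's analysis ``yields'' this is not yet an argument. The paper closes this by a different mechanism: for $X ⊂ ℙ$ a (weighted) hypersurface of degree $d$, it uses the conormal sequence $0 → \sN^*_{X/ℙ} ⊗ Ω^{k-1}_X → Ω^{[k]}_ℙ|_X → Ω^k_X → 0$ twisted by $𝒪_X(k)$, reducing the critical-twist vanishing on $X$ to (i) $H^0\bigl(X, Ω^{[k]}_ℙ(k)|_X\bigr) = 0$, which follows from the classical Bott-type vanishings $H^0\bigl(ℙ, Ω^{[k]}_ℙ(k)\bigr) = H^1\bigl(ℙ, Ω^{[k]}_ℙ(k-d)\bigr) = 0$ on the ambient weighted projective space (Dolgachev), and (ii) $H^1\bigl(X, Ω^{k-1}_X(k-d)\bigr) = 0$, which is where Flenner's Satz~8.11(1.b) actually enters --- as an $H^1$ vanishing, not an $H^0$ one. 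You would need to supply this reduction (or an equivalent) to make the weighted complete intersection cases go through. For the Grassmannian sections your Bott-plus-Lefschetz plan is viable and is essentially what the paper does for $n = 4, 5$: stability of $𝒯_X$ and integrality of slopes force any destabiliser to produce a section of $Ω^1_X(1)$, which is then excluded by pushing the known cohomology of $Ω^1_𝔾(1)$ down the conormal sequences of successive hyperplane sections; for $n = 6$ the paper takes a shortcut you miss, namely that $𝔾(2,5)$ is Kähler--Einstein, so Tian's theorem (Example~\ref{ex:Tian}) applies directly.
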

\begin{proof}
  By assumption, $X$ has index $r = n-1$, so $X$ is a \emph{del Pezzo manifold},
  which are classified, see \cite[Thm.~3.3.1 and Rem.~3.3.2]{MR1668579} or
  \cite[(8.11)]{MR1162108}.  In particular, all such manifolds with $ρ=1$ are
  complete intersections in a weighted projective space or linear sections of
  the Grassmannian $𝔾(2,5)$ embedded into $ℙ⁹$ by the Plücker embedding.
  
  We first treat the case of complete intersections, for which we will verify
  Condition~\eqref{cond}.  The classification exhibits four cases:
  \begin{itemize}
  \item $X$ is a hypersurface of degree $6$ in $ℙ(1, 1, …, 2, 3)$,
    
  \item $X$ is a hypersurface of degree $4$ in $ℙ(1, 1, …, 1, 2)$,
    
  \item $X$ is a cubic in $ℙ^{n+1}$,
    
  \item $X$ is the intersection of two quadrics in $ℙ^{n+2}$.
  \end{itemize}
  
  We treat the first three cases simultaneously and leave the fourth to the
  reader.  Let $ℙ$ denote the relevant (weighted) projective space and let $d$
  be the degree of $X ⊂ ℙ$.  Recall from Remark~\ref{rem:Fujita=Mori} that $ℙ$
  is smooth along $X$, so that we have an exact sequence
  \[
    0 → \sN^*_{X/ℙ} ⊗ Ω_X^{k-1} → Ω^{[k]}_ℙ|_X → Ω^k_X → 0.
  \]
  Tensoring with $𝒪_X(k)$ and taking cohomology, things come down to showing
  that
  \begin{equation}\label{eq:W1}
    H⁰ \bigl(X,\, Ω^{[k]}_ℙ(k)|_X\bigr) =
    H¹ \bigl(X,\, Ω^{k-1}_X(k-d)\bigr) = \{0\} \quad\quad \text{ for all } 1≤ k ≤ ⌊ \frac{n-1}{2}⌋.
  \end{equation}
  The right-hand side of \eqref{eq:W1} is settled by
  \cite[Satz~8.11(1.b)]{Fle81}, whereas the left-hand side follows from
  \[
    H⁰ \bigl(ℙ,\, Ω_{ℙ}^{[k]}(k) \bigr) = \{0\} %
    \quad\text{and}\quad %
    H¹ \bigl(ℙ,\, Ω^{[k]}_{ℙ}(k-d) \bigr) = \{0\}\quad \quad \text{for all } 1 ≤ k ≤
    ⌊\frac{n-1}{2}⌋.
  \]
  Both are of course classical, see, e.g., \cite[Thm.\ 2.3.2 and Cor.~2.3.4;
  notation on p.~47]{Dol82}.
  
  Thus, it remains to treat linear sections of the Grassmannian $𝔾 := 𝔾(2,5)$
  embedded by Plücker in $ℙ⁹$ (in dimensions $3$ up to $6$).  If $\dim X = 6$,
  then $X = 𝔾(2,5)$ is Kähler-Einstein, so we are done by Example~\ref{ex:Tian}.
  Therefore, we are reduced to $3 ≤ n = \dim X ≤ 5$.  Suppose $𝒯_X$ is not
  $\frac{n}{n+1}$-semistable and consider a $\frac{n}{n+1}$-destabilising
  subsheaf $ℱ ⊂ 𝒯_X$.  Since $𝒯_X$ is stable, \cite[Thm.~2.3]{PW93}, and has
  slope $\frac{r}{n}$, elementary slope considerations give $ \dim X ≥ 4$,
  $\rank(ℱ) = n-1$, and $c_1(ℱ) = n-2$.  Similar to \eqref{eq:C}, taking
  determinants hence yields
  \begin{equation}\label{eq:nonvanishing}
    H⁰ \bigl(X,\, Ω¹_X(1)\bigr) \ne \{0\}.
  \end{equation}
  
  We will see that this is impossible.  First, suppose that $n = 5$ and look at
  the structure sheaf sequence for the hyperplane section $X ⊂ 𝔾 ⊂ ℙ⁹$, twisted
  with $Ω¹_𝔾(1)$.  Since
  \[
    h⁰\bigl(𝔾,\, Ω¹_𝔾(1)\bigr) = h¹\bigl(𝔾,\, Ω¹_𝔾(1)\bigr) %
    = h²\bigl(𝔾,\, Ω¹_𝔾\bigr)= 0
  \]
  by \cite[Lem.~0.1]{PW93}, and since $h¹\bigl(𝔾,\, Ω¹_𝔾\bigr) = b_2(𝔾) = 1$, we
  see that $h⁰\bigl(X,\, Ω¹_𝔾(1)|_X\bigr) = 1$ and
  $h¹\bigl(X, Ω¹_𝔾(1)|_X \bigr) = 0$.  Therefore, taking cohomology of the
  twisted conormal sequence
  \[
    0 → 𝒪_X → Ω¹_𝔾(1)|_X → Ω¹_X(1) → 0
  \]
  we obtain
  \begin{equation}\label{eq:ngleich5}
    H⁰\bigl(X,\, Ω¹_X(1)\bigr) = H¹\bigl(X,\, Ω¹_X(1)\bigr)= \{0\},
  \end{equation}
  contradicting \eqref{eq:nonvanishing} and proving the assertion for $n=5$.

  Second, look at a further hyperplane section $Y ⊂ X$, i.e, the case $n=4$, and
  repeat the above argument: use the non-vanishing \eqref{eq:nonvanishing}, but
  this time for $Y$, together with
  $H⁰\bigl(X,\, Ω¹_X(1)\bigr) = H¹\bigl(X,\, Ω¹_X(1)\bigr) = \{0\}$, see
  \eqref{eq:ngleich5}, and $h¹\bigl(X,\, Ω¹_X\bigr) = b_2(X) = 1$ to get
  $h⁰\bigl(Y,\, Ω¹_X(1)|_Y\bigr) = 1$, which then contradicts the vanishing
  obtained from the conormal sequence for $Y$ in $X$.
\end{proof}

\begin{cor}\label{cor:Fano3folds}
  Let $X$ be a smooth Fano threefold.  If $ρ(X) = 1$, then $𝒯_X$ is
  $\frac{n}{n+1}$-semistable, and the canonical extension $ℰ_X$ is semistable.
\end{cor}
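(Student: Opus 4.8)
The plan is to deduce the semistability of $ℰ_X$ from the $\frac{n}{n+1}$-semistability of $𝒯_X$ by a direct appeal to the $λ$-stability criterion, Lemma~\ref{obs:lsc}: once $𝒯_X$ is known to be $\frac{3}{4}$-semistable (here $n = 3$), that lemma immediately yields semistability of $ℰ_X$. So everything reduces to proving that $𝒯_X$ is $\frac{3}{4}$-semistable, and I would organise this according to the Fano index $r$ of $X$. For a smooth Fano threefold with $ρ(X) = 1$ the index satisfies $1 ≤ r ≤ 4$, with $r = 4$ forcing $X ≅ ℙ^3$ and $r = 3$ forcing $X$ to be a quadric threefold by the Kobayashi–Ochiai theorem \cite{KO73}.

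The two highest-index cases are already recorded. For $X ≅ ℙ^3$ and for the quadric, Remark~\ref{rem:extreme_indices} states that $𝒯_X$ is $\frac{n}{n+1}$-semistable (in fact $\frac{n}{n+1}$-stable in the quadric case), so nothing remains to be done there. The index-two case is exactly the situation $r = n - 1$ of Proposition~\ref{stab} with $n = 3$; since $X$ is then a del Pezzo threefold with $ρ(X) = 1$, that proposition applies verbatim and gives the $\frac{3}{4}$-semistability of $𝒯_X$.

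It remains to treat the index-one case, where I expect the only genuine subtlety. One cannot simply invoke the weighted-complete-intersection argument of Proposition~\ref{prop:stab1}, because not every index-one Fano threefold of Picard number one is a weighted complete intersection. Instead I would exploit the integrality of first Chern classes. By \cite[Prop.~2.2]{PW93} the tangent sheaf $𝒯_X$ is stable, and its slope is $μ(𝒯_X) = \frac{r}{n} = \frac{1}{3}$, so that $\frac{n}{n+1}·μ(𝒯_X) = \frac{1}{4}$. Let $𝒮 ⊂ 𝒯_X$ be a saturated subsheaf of rank $q ∈ \{1, 2\}$ and write $μ(𝒮) = \frac{c_1(𝒮)}{q}$. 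If $c_1(𝒮) ≥ 1$, then $μ(𝒮) ≥ \frac{1}{q} ≥ \frac{1}{2} > \frac{1}{3} = μ(𝒯_X)$, contradicting stability; hence $c_1(𝒮) ≤ 0$ and $μ(𝒮) ≤ 0 ≤ \frac{1}{4}$. Thus every saturated subsheaf of proper positive rank has slope at most $\frac{3}{4}·μ(𝒯_X)$, which is precisely $\frac{3}{4}$-semistability. (As a consistency check, one could alternatively note that Corollary~\ref{cordim3} already gives the stronger conclusion that $ℰ_X$ is stable in this case.)

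The main obstacle is therefore concentrated in the index-one case: the machinery built in the previous subsections — Flenner's non-vanishing for twisted forms and restriction to hyperplane sections — is tailored to weighted complete intersections and Grassmannian sections, whereas index-one Fano threefolds form a larger and more heterogeneous family. The point that makes the argument go through without a case-by-case analysis of the classification is the observation that stability already suffices once the arithmetic of $c_1$ is taken into account.
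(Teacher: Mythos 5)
Your proposal is correct, and for indices $r=2,3,4$ it coincides with the paper's proof: Remark~\ref{rem:extreme_indices} for $r=3,4$, Proposition~\ref{stab} for $r=2$, and Lemma~\ref{obs:lsc} to pass from $\frac{n}{n+1}$-semistability of $𝒯_X$ to semistability of $ℰ_X$. The difference is in the index-one case. The paper simply cites Remark~\ref{rem:extreme_indices} there as well; but that remark (and the underlying Proposition~\ref{prop:stab1}, whose proof uses Flenner's theorem) is stated only for weighted complete intersections, and, as you correctly observe, not every index-one Fano threefold of Picard number one is a weighted complete intersection in the restrictive sense adopted in the paper (e.g.\ the prime Fano threefolds $V_{2g-2}$ of genus $g\geq 6$ are linear sections of homogeneous spaces rather than complete intersections). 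Your replacement argument --- stability of $𝒯_X$ from \cite[Prop.~2.2]{PW93} combined with integrality of $c_1(𝒮)$ in $\Pic(X)\cong ℤ$, forcing $μ(𝒮)\leq 0 < \frac{1}{4}=\frac{3}{4}·μ(𝒯_X)$ for every saturated subsheaf of rank $1$ or $2$ --- is correct, covers all index-one Fano threefolds of Picard number one uniformly, and in fact works in any dimension for index one. So your route buys a self-contained and slightly more robust treatment of the index-one case, at the cost of one extra paragraph; the paper's version is shorter but leans on a remark whose stated hypotheses do not literally cover all the varieties in question.
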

\begin{proof}
  Fano threefolds of Picard number one and index $r = 1, 3,4$ are taken care of
  by Remark~\ref{rem:extreme_indices}, whereas those of index $r = 2$ are
  covered by Proposition~\ref{stab}.  Lemma~\ref{obs:lsc} implies the statement
  about the canonical extension.
\end{proof}

\begin{rem}
  Using the classification of Fano manifolds of coindex 3 and the same methods,
  Theorem \ref{stab} certainly holds in case $r = \dim X -2$ as well.
\end{rem}

\subsection{Relation to K-stability}
\approvals{Daniel & yes \\Stefan & yes\\ Thomas & yes}

On the one hand, many of the examples considered in the preceding section can be
shown to be K-polystable using (in each single case) technically much more
advanced methods; see the overview provided in
\cite[Sect.~6.12]{Xu-K-stabilityNotes}.  Then, the positive solution of the
YTD-conjecture shows that these admit a Kähler-Einstein metric, which in turn
implies that $𝒯_X$ is $\frac{n}{n+1}$-semistable by Tian's original result, see
Example~\ref{ex:Tian}.  Note that according to \emph{loc.~cit.}, it is an open
question whether all smooth Fano threefolds of Picard number one are
K-semistable.

On the other hand, if $X$ is a linear section of $𝔾(2,5)$ of dimension $4$ or
$5$, then the pair $(X, -K_X)$ is not K-semistable, see \cite{MR3715691}.
Together with Proposition~\ref{stab} and Lemma~\ref{obs:lsc}, this shows that
$ℰ_X$ might be semistable (making Proposition~\ref{prop:Fano1} and
Theorem~\ref{thm:Fano2} applicable) even for K-unstable Fano manifolds $X$,
which in particular do not carry Kähler-Einstein metrics.

Finally, we note that \cite[Thm.~1.5]{MR3731324} together with the degree
estimate \cite[(4.11)]{MR1163733} implies that for any Fano manifold $X$ such
that $(X, -K_X)$ is K-semistable the canonical extension $ℰ_X$ is semistable
with respect to $-K_X$, as seems to be well-known to experts.\footnote{We thank
  Henri Guenancia for confirming our corresponding educated guess.}

%
%
\svnid{$Id: 07-examples.tex 498 2020-06-09 13:38:11Z peternell $}

\section{Examples}\label{sect:examples}

\subsection{Necessity of stability assumptions}
\label{ssec:nosta}
\approvals{Daniel & yes \\Stefan & yes\\ Thomas & yes}

The following elementary examples showing that certain stability conditions are
necessary for the $ℚ$-Miyaoka-Yau inequality and the characterisation of the
extreme case to hold are well-known to experts.  We recall them for the
convenience of the reader.

\begin{example}[Projectivised bundle]
  Consider the 4-dimensional Fano manifold
  $X := ℙ_{ℙ³} \bigl( 𝒪_{ℙ³} ⊕ 𝒪_{ℙ³}(3) \bigr)$.  Then $[-K_X]⁴ = 800$, whereas
  $[-K_X]² · c_2(X) = 296$.  It follows that $X$ violates the Bogomolov-Gieseker
  Inequality~\eqref{eq:BG} as well as the Miyaoka-Yau Inequality~\eqref{eq:MY}.
  In particular, it follows from \cite[Thm.~3.4.1]{MR2665168} that $𝒯_X$ is not
  semistable with respect to $-K_X$.
\end{example}

\begin{example}[Weighted projective space]\label{ex:wps}
  The three-dimensional weighted projective space $X := ℙ(1,1,1,3)$ has one
  isolated canonical Gorenstein singularity of type $\frac{1}{3}(1,1,1)$, which
  admits a crepant resolution.  Moreover, $X$ is Fano and its anticanonical
  volume is $[-K_X]³ = 72$; see \cite{285155} and \cite[Ex.~1.4]{MR2141325}.
  Clearly, $χ(𝒪_X) = 1$, so that \cite[Cor.~10.3]{Reid87} implies\footnote{Note
    that the ``basket'' is empty due to the existence of a crepant resolution.}
  that
  \[
    [-K_X]·\what{c}_2\left( Ω^{[1]}_X \right) = [-K_X]·c_2(X) = 24.
  \]
  Hence, $X$ realises equality in the $ℚ$-Bogomolov-Gieseker
  Inequality~\eqref{eq:BG} and violates the $ℚ$-Miyaoka-Yau
  Inequality~\eqref{eq:MY}.
\end{example}

\begin{rem}[Explanation of Example~\ref{ex:wps}]
  Prokhorov \cite[Thm~1.5]{MR2141325} proved that any Fano threefold $X$ with
  only canonical Gorenstein singularities satisfies the bound $(-K_X)³ ≤ 72$,
  and in case of equality either $X ≅ ℙ(1,1,1,3)$ or $X ≅ ℙ(1,1,4,6)$.  With
  this in mind, consider $Y := ℙ_{ℙ²} \bigl( 𝒪_{ℙ²} ⊕ 𝒪_{ℙ²}(3) \bigr)$ and
  $σ: Y → Z$ be the blow down of the exceptional section; then $Z$ is a
  three-dimensional Fano Gorenstein variety of index $2$ with a canonical
  singularity and $[-K_Y]³ = 72$; consequently, $Z ≅ ℙ(1,1,1,3)$.  Using this
  realisation, it is easy to check that $𝒯_{ℙ(1,1,1,3)}$ is not semistable.  In
  fact, $σ_*(𝒯_{Y/ℙ²})$ is a torsion free destabilising subsheaf.
\end{rem}

If $X$ is a smooth Fano threefold, the above phenomena do not occur, even
without assuming $𝒯_X$ to be semistable.  In fact, classification shows that
\[
  [-K_X]³ ≤ 4³ = 64,
\]
for any smooth Fano threefold, \cite{MR641971, MR715648, MR1668579, MR1969009}.
Since
\[
  -K_X · c_2(X) = 24·χ(X,𝒪_X) = 24,
\]
Inequality~\eqref{eq:MY} holds.  Moreover, equality in \eqref{eq:MY} occurs if
and only if $X = ℙ³$.  This remains true for threefolds with at worst Gorenstein
terminal singularities, since these admit smoothings by \cite{MR1489117}.  We
refer the reader to \cite{MR2141325} for a further discussion.

\subsection{Equality in the Miyaoka-Yau Inequality~\eqref{eq:MY}}
\label{sec:7-6}
\approvals{Daniel & yes \\Stefan & yes\\ Thomas & yes}

Examples of Fano varieties realising the bound \eqref{eq:MY} are produced by the
following result.

\begin{prop}\label{prop:examples}
  Let $V$ be an $n$-dimensional complex vector space, $n ≥ 2$.  Let $G < \GL(V)$
  be a finite subgroup having the following two properties.
  \begin{enumerate}
  \item No element of $G ∖ \{e\}$ is a homothety.
    
  \item No element of $G$ is a quasi-reflection.\footnote{An element
      $g ∈ \GL(V)$ is called \emph{quasi-reflection} if $1$ is an eigenvalue of
      $g$ with geometric multiplicity equal to $\dim V -1$.}
  \end{enumerate}
  Let $W$ be the direct sum of $V$ with a $1$-dimensional trivial
  $G$-representation, $W := ℂ ⊕ V$.  Then, the quotient map $γ: ℙ(W) → ℙ(W)/G$
  for the induced $G$-action on $ℙ(W)$ is quasi-étale with Galois group $G$.
\end{prop}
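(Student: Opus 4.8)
The plan is to regard $γ$ as the quotient of a smooth, hence normal, projective variety by a faithful action of a finite group, so that the only substantive points are the faithfulness of the action on $ℙ(W)$ (which identifies the Galois group with $G$) and the codimension of the ramification. Indeed, since $ℙ(W)$ is smooth and projective, the quotient $ℙ(W)/G$ is a normal projective variety and $γ$ is automatically finite, surjective and of relative dimension zero; it therefore remains only to analyse the $G$-action pointwise.

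First I would verify that the induced action on $ℙ(W)$ is faithful. Each $g ∈ G$ acts on $W = ℂ ⊕ V$ by the block matrix $\mathrm{diag}(1, g|_V)$, and this descends to the identity on $ℙ(W)$ only if $\mathrm{diag}(1, g|_V)$ is a scalar matrix. The entry $1$ on the trivial summand forces that scalar to be $1$, whence $g|_V = \Id_V$ and $g = e$ because $G ↪ \GL(V)$. Thus $G$ embeds into $\Aut\bigl(ℙ(W)\bigr)$ and $γ$ is Galois with group $G$; note that this step uses neither hypothesis (1) nor (2).

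For the quasi-étale assertion I would use the standard fact that a finite group acts freely on the complement of the union $\bigcup_{g ≠ e} F_g$ of the fixed loci $F_g := \mathrm{Fix}(g) ⊂ ℙ(W)$ of its non-identity elements, and that a free finite-group action yields an étale quotient map. Hence it suffices to show that each $F_g$ with $g ∈ G ∖ \{e\}$ has codimension at least two. As $g$ has finite order it is diagonalisable, and $F_g$ is the disjoint union of the projectivised eigenspaces $ℙ(W_μ)$, where $W_μ ⊆ W$ denotes the $μ$-eigenspace of $g$. The component $ℙ(W_μ)$ has codimension $\dim W - \dim W_μ = (n+1) - \dim W_μ$ in $ℙ(W)$, so it fails to have codimension $≥ 2$ precisely when $\dim W_μ = n$, i.e.\ when $W_μ$ is a hyperplane in $W$.

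The crux is thus to exclude a codimension-one eigenspace, and this is exactly where the shape $W = ℂ ⊕ V$ interacts with the two hypotheses. If $μ = 1$, then $W_1 = ℂ ⊕ V^{g=1}$, so $\dim W_1 = n$ forces $\dim V^{g=1} = n-1$; that is, $g$ fixes a hyperplane of $V$ pointwise and is a quasi-reflection, which is excluded by (2). If $μ ≠ 1$, the trivial summand contributes nothing, $W_μ$ is the $μ$-eigenspace of $g$ inside $V$, and $\dim W_μ = n = \dim V$ forces $g = μ·\Id_V$, a non-trivial homothety, which is excluded by (1). Consequently no such component exists, every $F_g$ with $g ≠ e$ has codimension $≥ 2$, and $γ$ is étale away from a closed set of codimension $≥ 2$, i.e.\ quasi-étale. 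I expect the genuine content to lie in this case analysis, together with the observation that a codimension-one fixed locus is exactly what would destroy étaleness in codimension one --- near such a locus $g$ acts on the tangent space as a pseudo-reflection --- whereas the remaining verifications are formal.
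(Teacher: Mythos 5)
Your proof is correct and is in substance the paper's argument: both reduce quasi-étaleness to showing that no $g \in G \setminus \{e\}$ fixes a divisor of $ℙ(W)$ pointwise, with hypothesis~(2) excluding a fixed hyperplane meeting the affine chart (the eigenvalue-one case) and hypothesis~(1) excluding the hyperplane at infinity $ℙ(V)$. The only difference is organisational --- the paper splits $ℙ(W)$ into the $G$-invariant chart $\{z_0 \neq 0\} \cong V$ and its complement and invokes the standard link between quasi-reflections and ramification of $V \to V/G$, whereas you make this explicit by writing the full fixed locus of each $g$ as a disjoint union of projectivised eigenspaces and bounding the codimension of each component.
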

\begin{proof}
  Introduce linear coordinates $z_1, …, z_n$ on $V$, $z_0$ on $ℂ$, and the
  corresponding homogeneous coordinates $[z_0: z_1: … : z_n]$ on $ℙ(W)$.  By
  construction, the point $[1:0:…:0] ∈ ℙ(W)$ is fixed by $G$.  Moreover, the
  $G$-invariant (and hence $γ$-saturated) neighbourhood $U := \{z_0 ≠ 0\}$ is
  $G$-equivariantly isomorphic to $V$ in such a way that $[1:0:…:0]$ is mapped
  to $0 ∈ V$.  In particular, $G$ acts effectively on $ℙ(W)$.  As $G$ does not
  have any quasi-reflections, the restriction of $γ$ to $U$ is quasi-étale.
 
  It therefore remains to exclude ramification of $γ$ along $Z := ℙ(W) ∖ U$,
  which is $G$-equivariantly isomorphic to $ℙ(V)$, where $G$ act on the latter
  space via its inclusion into $\GL(V)$.  Assume that there is an element
  $g ∈ G$ that fixes $Z$ pointwise.  Then, $g$ acts on $V$ via a homothety, so
  that $g=\{e\}$ by assumption.  Therefore, $γ$ is unramified at the generic
  point of $Z$ and therefore quasi-étale, as claimed.
\end{proof}

\begin{example}
  Let $ξ$ be a non-trivial third root of unity, and let $G = ℤ/3ℤ$ act on
  $V = ℂ²$ by
  \[
    [m]·(z_1, z_2) = (ξ^{m}· z_1, ξ^{2m}· z_2).
  \]
  The action is faithful, so that we may consider $G$ as a subgroup of
  $\GL(2, ℂ)$.  Moreover, the assumptions of Proposition~\ref{prop:examples} are
  fulfilled, so that $ℙ(ℂ⊕V) / G$ is a $ℚ$-Fano surface realising equality in
  the $ℚ$-Miyaoka-Yau Inequality~\eqref{eq:MY}.
\end{example}

\begin{example}
  Let $G$ be a non-Abelian finite simple group, and let $ρ: G → \GL(V)$ be any
  non-trivial finite-dimensional representation; $\dim V =:n$ is necessarily
  greater than two.  As $\ker(ρ)$ is a normal subgroup of $G$, the
  representation is faithful, so that $G < \GL(V)$.  Similarly, as $G$ is
  non-Abelian, $Z(G)$ has to be trivial; in particular, $G$ does not contain
  homotheties.  Moreover, the subgroup of $G$ generated by quasi-reflections is
  normal, \cite[proof of Prop.~6]{MR210944}, so likewise trivial, since the
  Shephard-Todd classification of complex reflection groups,
  \cite[Chap.~8]{MR2542964}, does not contain non-Abelian simple groups.
  Consequently, $G$ fulfils the assumptions of Proposition~\ref{prop:examples}
  above, and $ℙ(ℂ⊕V) / G$ is a $ℚ$-Fano $n$-fold realising equality in the
  $ℚ$-Miyaoka-Yau Inequality~\eqref{eq:MY}.
 
  As subexamples, we may take $G=A_5$ and $ρ$ the three-dimensional (real)
  representation realising $A_5$ as the rotational symmetry group of the
  icosahedron, or $G$ the monster group with its smallest non-trivial
  representation, which has dimension $196.883$.
\end{example}

We find that every finite group appears as the Galois group for one of our
examples.

\begin{cor}\label{cor:7-7}
  Let $G$ be any finite group.  Then, there exists a finite-dimensional,
  faithful complex representation $ρ: G ↪ \GL(W)$ such that the quotient map
  $γ: ℙ(W) → ℙ(W)/G$ for the induced $G$-action on $ℙ(W)$ is quasi-étale with
  Galois group $G$.
\end{cor}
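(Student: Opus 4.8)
The plan is to derive Corollary~\ref{cor:7-7} directly from Proposition~\ref{prop:examples}. That proposition takes as input a faithful $G$-representation $V$ of dimension at least two having no nontrivial homotheties and no quasi-reflections, and returns exactly the desired quasi-étale quotient for $W := \bC ⊕ V$. So the whole task reduces to producing, for an \emph{arbitrary} finite group $G$, one faithful representation $V$ meeting these two conditions; the representation $ρ : G \into \GL(W)$ claimed in the corollary is then the induced action on $W = \bC ⊕ V$, trivial on the first summand, which is faithful precisely because the action on $V$ is.

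For the construction I would take $R := \bC[G]$ to be the regular representation and set $V := R ⊕ R$, so that $\dim V = 2·|G| ≥ 2$ and $V$ is faithful. To check that no $g ∈ G ∖ \{e\}$ acts as a homothety, recall that $g$ acts on $R$ by the basis permutation $e_h ↦ e_{gh}$; a nontrivial permutation matrix is never scalar, and the same holds on the block sum $V$. To rule out quasi-reflections I would use a parity argument: the eigenvalue-$1$ eigenspace of a permutation matrix has dimension equal to the number of its cycles, which for $g$ acting on $R$ is $|G|$ divided by the order of $g$; hence on $V = R ⊕ R$ this eigenspace has \emph{even} dimension, whereas $\dim V - 1 = 2·|G| - 1$ is \emph{odd}. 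The two can never agree, so no element of $G$ (including $e$) is a quasi-reflection. With both hypotheses verified, Proposition~\ref{prop:examples} finishes the proof.

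The only real obstacle is to satisfy both conditions \emph{simultaneously and uniformly} across all finite groups. Using the regular representation alone handles condition~(1) painlessly, but a single copy can fail condition~(2) for small groups — for $G = ℤ/3ℤ$ one must be careful, and for $G = ℤ/2ℤ$ the nonidentity element acts on $\bC[G]$ as an honest reflection. Passing to $R ⊕ R$ is precisely what repairs this: doubling forces the fixed-space dimension to be even, and the resulting parity mismatch with $\dim V - 1$ excludes quasi-reflections for every $G$ at once, which is the key point of the argument.
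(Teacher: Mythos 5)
Your proof is correct, and it reduces to Proposition~\ref{prop:examples} exactly as the paper does; the difference lies in how you manufacture a faithful representation with no nontrivial homotheties and no quasi-reflections. The paper starts from an \emph{arbitrary} faithful representation $V_0$, adds a trivial summand to exclude homotheties, and only doubles when a quasi-reflection is actually present; to rule out quasi-reflections in $V\oplus V$ it invokes a reducibility lemma for reflection groups (Yu--Jiang, \cite{MR2912485}), which forces the fixed hyperplane of a putative quasi-reflection to contain a whole copy of $V$, contradicting faithfulness. You instead specialise to the regular representation and double it, which lets you replace the external citation by two elementary observations about permutation matrices: a non-identity left translation has zero diagonal (so is never scalar), and its fixed space has dimension equal to the number of cycles, hence even dimension on $R\oplus R$, which can never equal the odd number $\dim V-1$. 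Your route is more self-contained and verifies both hypotheses in one stroke for every $G$ simultaneously; the paper's route is more flexible (it works from any faithful $V_0$ and avoids doubling when unnecessary, so it can produce much lower-dimensional examples than your uniform $\dim W = 2|G|+1$). Both are complete proofs; the only point worth double-checking in yours, and which you did implicitly get right, is that the identity element is excluded as a quasi-reflection by the same parity mismatch, so condition~(2) of Proposition~\ref{prop:examples} holds for \emph{all} of $G$ as stated there.
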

\begin{proof}
  Let $V_0$ be a faithful complex representation of $G$.  Then, the direct sum
  $V:= V_0 ⊕ ℂ$ of $V_0$ with the trivial representation is also faithful, and
  no element of $G ∖ \{e\}$ acts on $V$ by a homothety.
 
  We now follow the argument given in \cite[proof of
  Cor.~5]{BraunKLTFiniteness}.  If there is no element of $G$ acting via a
  quasi-reflection on $V$, then by Proposition~\ref{prop:examples} above we may
  take $W= V ⊕ ℂ$, the direct sum of $V$ with a further trivial representation,
  on which $G$ also acts without non-trivial homotheties.  Otherwise, we
  consider
  \begin{equation}\label{eq:direct_sum_with_itself}
    V' := V ⊕ V.
  \end{equation}
  Note that no element of $G ∖ \{e\}$ acts on $V'$ by a homothety.  Suppose that
  there is a $g ∈ G$ acting as a quasi-reflection on $V'$.  Consider $V'$ as a
  representation of the subgroup $\langle g \rangle ⊂ G$ generated by $g$; by
  assumption, this contains a pointwise fixed hyperplane $H$.  As the
  representation $V'$ is obviously reducible, a non-trivial decomposition into
  subrepresentations being given by \eqref{eq:direct_sum_with_itself}, it
  follows from the first paragraph of the proof of \cite[Theorem]{MR2912485}
  that one of the copies of $V$ has to be contained in $H$.  In other words,
  $\langle g \rangle$, and in particular $g$, acts trivially on $V$,
  contradicting faithfulness of $V$.  We may then apply
  Proposition~\ref{prop:examples} again in order to conclude.
\end{proof}

Corollary~\ref{cor:7-7} is particularly interesting because of the following
topological observation, which shows that every finite group appears as the
fundamental group of the smooth locus in a $ℚ$-Fano variety that realises
equality in the $ℚ$-Miyaoka-Yau Inequality~\eqref{eq:MY}.

\begin{lem}[Topological properties of $ℙ^n/G$]
  Let $X ≅ ℙ^n/G$ for some finite subgroup $G < \Aut_𝒪(ℙ^n)$ such that the
  quotient map $γ: ℙ^n → X$ is quasi-étale.  Then, $X$ is simply connected and
  $π_1(X_{\reg}) = G$.
\end{lem}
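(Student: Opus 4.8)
The plan is to work directly with the quotient presentation $\gamma\colon \bP^n \to X = \bP^n/G$ and to exploit the quasi-étale hypothesis, which says precisely that $G$ acts freely in codimension one. Write $R \subset \bP^n$ for the locus of points with non-trivial stabiliser. If some $g \in G \setminus \{e\}$ fixed a divisor, then $\gamma$ would be ramified in codimension one; hence no such $g$ exists, every component of $R$ has codimension at least two, and $G$ acts freely on $\bP^n \setminus R$.

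The first substantive step is to identify $X_{\reg}$ with this free quotient. Since $\gamma$ is quasi-étale, no non-trivial element of any stabiliser $G_p$ can act as a quasi-reflection on the tangent space $T_p \bP^n$---otherwise its fixed locus would be a $g$-invariant divisor germ, reproducing codimension-one ramification. By the Chevalley--Shephard--Todd theorem, the quotient $\bP^n/G$ is smooth at $\gamma(p)$ exactly when $G_p$ is generated by quasi-reflections, that is, exactly when $G_p = \{e\}$. Therefore $\gamma^{-1}(X_{\reg}) = \bP^n \setminus R$, and $\gamma$ restricts to a fixed-point-free Galois covering $\bP^n \setminus R \to X_{\reg}$ with group $G$.

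From here $\pi_1(X_{\reg})$ falls out: removing the analytic set $R$ of complex codimension at least two from the simply connected manifold $\bP^n$ does not change the fundamental group, so $\bP^n \setminus R$ is simply connected and is thus the universal cover of $X_{\reg}$. The deck group of the covering then yields $\pi_1(X_{\reg}) \cong G$.

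It remains to prove that $X$ itself is simply connected, and here I would appeal to Armstrong's computation of the fundamental group of an orbit space: for a finite group acting on the simply connected space $\bP^n$, one has $\pi_1(\bP^n/G) \cong G/N$, where $N$ is the normal subgroup of $G$ generated by those elements that fix a point of $\bP^n$. The decisive observation is that \emph{every} element of $G$ fixes a point of $\bP^n$: lifting $g \in G \subset \PGL(n+1,\bC)$ to $\GL(n+1,\bC)$ and choosing an eigenvector of the representative exhibits a fixed point. Consequently $N = G$ and $\pi_1(X) = 1$. I expect the identification $\gamma^{-1}(X_{\reg}) = \bP^n \setminus R$---namely, that the singular points of the quotient are exactly the images of points with non-trivial stabiliser---to be the step demanding the most care, as this is where Chevalley--Shephard--Todd and the absence of quasi-reflections genuinely enter; the remaining covering-space and eigenvalue arguments are then essentially formal.
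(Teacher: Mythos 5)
Your proof is correct and follows essentially the same route as the paper: simple connectedness via Armstrong's theorem plus the observation that every element of $G$ fixes a point of $ℙ^n$, and $π_1(X_{\reg}) ≅ G$ via the free $G$-covering by the simply connected complement of a codimension-two locus. The only difference is that you spell out, via Chevalley--Shephard--Todd and the absence of quasi-reflections, why $γ^{-1}(X_{\reg})$ coincides with the free locus --- a point the paper's proof leaves implicit.
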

\begin{proof}
  Simple connectedness of $X$ follows from the fact that every $g ∈ G$ has a
  fixed point in the simply connected space $ℙ^n$ together with \cite[main
  result, first sentences of intro]{MR187244}.  To show that
  $π_1(X_{\reg}) = G$, note that $γ^{-1}(X_{\reg}) ⊂ ℙ^n$ has a complement of
  codimension at least two by assumption, and is therefore simply connected.
  Moreover, $G$ acts freely on it with quotient $X_{\reg}$, so that
  $π_1(X_{\reg}) = G$, as claimed.
\end{proof}


\begin{thebibliography}{GKKP11}

\bibitem[Arm65]{MR187244}
Mark~A. Armstrong.
\newblock On the fundamental group of an orbit space.
\newblock {\em Proc. Cambridge Philos. Soc.}, 61:639--646, 1965.
\newblock
  \href{https://doi.org/10.1017/s0305004100038974}{DOI:10.1017/s0305004100038974}.

\bibitem[Ati57]{Ati57}
Michael~F. Atiyah.
\newblock Complex analytic connections in fibre bundles.
\newblock {\em Trans. Amer. Math. Soc.}, 85:181--207, 1957.
\newblock \href{https://doi.org/10.2307/1992969}{DOI:10.2307/1992969}.

\bibitem[Bis09]{MR2545454}
Indranil Biswas.
\newblock Semistability and restrictions of tangent bundle to curves.
\newblock {\em Geom. Dedicata}, 142:37--46, 2009.
\newblock
  \href{https://doi.org/10.1007/s10711-009-9356-3}{DOI:10.1007/s10711-009-9356-3}.
  Preprint \href{https://arxiv.org/abs/0901.4161}{arXiv:0901.4161}.

\bibitem[Bra20]{BraunKLTFiniteness}
Lukas Braun.
\newblock The local fundamental group of a {K}awamata log terminal singularity
  is finite.
\newblock Preprint \href{https://arxiv.org/abs/2004.00522}{arXiv:2004.00522}.,
  2020.

\bibitem[CO75]{CH75}
Bang-Yen Chen and Koichi Ogiue.
\newblock Some characterizations of complex space forms in terms of {C}hern
  classes.
\newblock {\em Quart. J. Math. Oxford Ser. (2)}, 26(104):459--464, 1975.
\newblock
  \href{https://doi.org/10.1093/qmath/26.1.459}{DOI:10.1093/qmath/26.1.459}.

\bibitem[DGP20]{DGP20}
Stéphane Druel, Henri Guenancia, and Mihai Păun.
\newblock A decomposition theorem for $\mathbb{Q}$-{F}ano {K}ähler-{E}instein
  varieties.
\newblock Preprint \href{http://arxiv.org/abs/2008.05352}{arXiv:2008.05352},
  August 2020.

\bibitem[Dol82]{Dol82}
Igor Dolgachev.
\newblock Weighted projective varieties.
\newblock In {\em Group actions and vector fields ({V}ancouver, {B}.{C}.,
  1981)}, volume 956 of {\em Lecture Notes in Math.}, pages 34--71. Springer,
  Berlin, 1982.
\newblock \href{https://doi.org/10.1007/BFb0101508}{DOI:10.1007/BFb0101508}.

\bibitem[Don02]{MR1959581}
Simon~K. Donaldson.
\newblock Holomorphic discs and the complex {M}onge-{A}mpère equation.
\newblock {\em J. Symplectic Geom.}, 1(2):171--196, 2002.
\newblock
  \href{https://projecteuclid.org/euclid.jsg/1092316649}{euclid.jsg/1092316649}.

\bibitem[DPS94]{DPS94}
Jean-Pierre Demailly, Thomas Peternell, and Michael Schneider.
\newblock Compact complex manifolds with numerically effective tangent bundles.
\newblock {\em J. Algebraic Geom.}, 3(2):295--345, 1994.
\newblock Available from the author's web site
  \href{https://www-fourier.ujf-grenoble.fr/~demailly/manuscripts/dps1.pdf}{https://www-fourier.ujf-grenoble.fr/$\sim$demailly/manuscripts/dps1.pdf}.

\bibitem[Dru14]{Druel13a}
Stéphane Druel.
\newblock The {Z}ariski-{L}ipman conjecture for log canonical spaces.
\newblock {\em Bull. London Math. Soc.}, 46(4):827--835, 2014.
\newblock \href{https://doi.org/10.1112/blms/bdu040}{DOI:10.1112/blms/bdu040}.
  Preprint \href{http://arxiv.org/abs/1301.5910}{arXiv:1301.5910}.

\bibitem[Eis95]{E95}
David Eisenbud.
\newblock {\em Commutative algebra with a view toward algebraic geometry},
  volume 150 of {\em Graduate Texts in Mathematics}.
\newblock Springer-Verlag, New York, 1995.
\newblock
  \href{https://doi.org/10.1007/978-1-4612-5350-1}{DOI:10.1007/978-1-4612-5350-1}.

\bibitem[FL81]{FL81}
William Fulton and Robert Lazarsfeld.
\newblock Connectivity and its applications in algebraic geometry.
\newblock In {\em Algebraic geometry ({C}hicago, {I}ll., 1980)}, volume 862 of
  {\em Lecture Notes in Math.}, pages 26--92. Springer, Berlin, 1981.
\newblock \href{https://doi.org/10.1007/BFb0090889}{DOI:10.1007/BFb0090889}.

\bibitem[Fle81]{Fle81}
Hubert Flenner.
\newblock Divisorenklassengruppen quasihomogener {S}ingularit\"{a}ten.
\newblock {\em J. Reine Angew. Math.}, 328:128--160, 1981.
\newblock
  \href{https://doi.org/10.1515/crll.1981.328.128}{DOI:10.1515/crll.1981.328.128}.

\bibitem[Fle84]{Flenner84}
Hubert Flenner.
\newblock Restrictions of semistable bundles on projective varieties.
\newblock {\em Comment. Math. Helv.}, 59(4):635--650, 1984.
\newblock \href{https://doi.org/10.1007/BF02566370}{DOI:10.1007/BF02566370}.

\bibitem[Fuj80]{MR554521}
Takao Fujita.
\newblock On the hyperplane section principle of {L}efschetz.
\newblock {\em J. Math. Soc. Japan}, 32(1):153--169, 1980.
\newblock
  \href{https://doi.org/10.2969/jmsj/03210153}{DOI:10.2969/jmsj/03210153}.

\bibitem[Fuj90]{MR1162108}
Takao Fujita.
\newblock {\em Classification theories of polarized varieties}, volume 155 of
  {\em London Mathematical Society Lecture Note Series}.
\newblock Cambridge University Press, Cambridge, 1990.
\newblock
  \href{https://doi.org/10.1017/CBO9780511662638}{DOI:10.1017/CBO9780511662638}.

\bibitem[Fuj17]{MR3715691}
Kento Fujita.
\newblock Examples of {K}-unstable {F}ano manifolds with the {P}icard number 1.
\newblock {\em Proc. Edinb. Math. Soc. (2)}, 60(4):881--891, 2017.
\newblock
  \href{https://doi.org/10.1017/S0013091516000432}{DOI:10.1017/S0013091516000432}.
  Preprint \href{https://arxiv.org/abs/1508.04290}{arXiv:1508.04290}.

\bibitem[GKKP11]{GKKP11}
Daniel Greb, Stefan Kebekus, Sándor~J. Kovács, and Thomas Peternell.
\newblock Differential forms on log canonical spaces.
\newblock {\em Inst. {H}autes {É}tudes Sci.~{P}ubl.~{M}ath.}, 114(1):87--169,
  November 2011.
\newblock
  \href{https://doi.org/10.1007/s10240-011-0036-0}{DOI:10.1007/s10240-011-0036-0}.
  An extended version with additional graphics is available as
  \href{http://arxiv.org/abs/1003.2913}{arXiv:1003.2913}.

\bibitem[GKP16a]{GKP15}
Daniel Greb, Stefan Kebekus, and Thomas Peternell.
\newblock Movable curves and semistable sheaves.
\newblock {\em Int. Math. Res. Not.}, 2016(2):536--570, 2016.
\newblock \href{https://doi.org/10.1093/imrn/rnv126}{DOI:10.1093/imrn/rnv126}.
  Preprint \href{http://arxiv.org/abs/1408.4308}{arXiv:1408.4308}.

\bibitem[GKP16b]{GKP13}
Daniel Greb, Stefan Kebekus, and Thomas Peternell.
\newblock Étale fundamental groups of {K}awamata log terminal spaces, flat
  sheaves, and quotients of abelian varieties.
\newblock {\em Duke Math. J.}, 165(10):1965--2004, 2016.
\newblock
  \href{https://doi.org/10.1215/00127094-3450859}{DOI:10.1215/00127094-3450859}.
  Preprint \href{http://arxiv.org/abs/1307.5718}{arXiv:1307.5718}.

\bibitem[GKP21]{GKP20b}
Daniel Greb, Stefan Kebekus, and Thomas Peternell.
\newblock Projectively flat klt varieties.
\newblock {\em J. Éc. polytech. Math.}, 8:1005--1036, 2021.
\newblock \href{https://doi.org/10.5802/jep.164}{DOI:10.5802/jep.164}. Preprint
  \href{https://arxiv.org/abs/2010.06878}{arXiv:2010.06878}.

\bibitem[GKPT19]{GKPT19b}
Daniel Greb, Stefan Kebekus, Thomas Peternell, and Behrouz Taji.
\newblock The {M}iyaoka-{Y}au inequality and uniformisation of canonical
  models.
\newblock {\em Ann. Sci. Éc. Norm. Supér. (4)}, 52(6):1487--1535, 2019.
\newblock \href{https://doi.org/10.24033/asens.2414}{DOI:10.24033/asens.2414}.
  Preprint \href{https://arxiv.org/abs/1511.08822}{arXiv:1511.08822}.

\bibitem[Gro66]{EGA4-3}
Alexandre Grothendieck.
\newblock Éléments de géométrie algébrique. {IV}. Étude locale des
  schémas et des morphismes de schémas {III}.
\newblock {\em Inst. Hautes Études Sci. Publ. Math.}, (28):255, 1966.
\newblock Revised in collaboration with Jean Dieudonné.
  \href{http://www.numdam.org/item?id=PMIHES_1966__28__5_0}{numdam.PMIHES-1966-28-5-0}.

\bibitem[Gro70]{MR0262386}
Alexander Grothendieck.
\newblock Représentations linéaires et compactification profinie des groupes
  discrets.
\newblock {\em Manuscripta Math.}, 2:375--396, 1970.
\newblock \href{https://doi.org/10.1007/BF01719593}{DOI:10.1007/BF01719593}.

\bibitem[GW20]{GrebWong}
Daniel Greb and Michael~Lennox Wong.
\newblock Canonical complex extensions of {K}ähler manifolds.
\newblock {\em J. Lond. Math. Soc. (2)}, 2020.
\newblock \href{https://doi.org/10.1112/jlms.12287}{DOI: 10.1112/jlms.12287}.
  Preprint \href{https://arxiv.org/abs/}{arXiv:1807.01223}.

\bibitem[Har77]{Ha77}
Robin Hartshorne.
\newblock {\em Algebraic geometry}.
\newblock Springer-Verlag, New York, 1977.
\newblock Graduate Texts in Mathematics, No. 52.
  \href{https://doi.org/10.1007/978-1-4757-3849-0}{DOI:10.1007/978-1-4757-3849-0}.

\bibitem[HL10]{MR2665168}
Daniel Huybrechts and Manfred Lehn.
\newblock {\em The geometry of moduli spaces of sheaves}.
\newblock Cambridge Mathematical Library. Cambridge University Press,
  Cambridge, second edition, 2010.
\newblock
  \href{https://doi.org/10.1017/CBO9780511711985}{DOI:10.1017/CBO9780511711985}.

\bibitem[IP99]{MR1668579}
Vasily~A. Iskovskih and Yuri~G. Prokhorov.
\newblock Fano varieties.
\newblock In {\em Algebraic geometry, {V}}, volume~47 of {\em Encyclopaedia
  Math. Sci.}, pages 1--247. Springer, Berlin, 1999.

\bibitem[Kan20]{Kanemitsu20}
Akihiro Kanemitsu.
\newblock {\em J. Reine Angew. Math.}, 2020.
\newblock Ahead of print.
  \href{https://doi.org/10.1515/crelle-2020-0043}{DOI:10.1515/crelle-2020-0043}.
  Preprint \href{https://arxiv.org/abs/1912.12617}{arXiv:1912.12617}.

\bibitem[Keb02]{Kebekus02b}
Stefan Kebekus.
\newblock Characterizing the projective space after {C}ho, {M}iyaoka and
  {S}hepherd-{B}arron.
\newblock In {\em Complex geometry (Göttingen, 2000)}, pages 147--155.
  Springer, Berlin, 2002.
\newblock
  \href{https://doi.org/10.1007/978-3-642-56202-0_10}{DOI:10.1007/978-3-642-56202-0\_10}.

\bibitem[KM98]{KM98}
János Kollár and Shigefumi Mori.
\newblock {\em Birational geometry of algebraic varieties}, volume 134 of {\em
  Cambridge Tracts in Mathematics}.
\newblock Cambridge University Press, Cambridge, 1998.
\newblock With the collaboration of C.\ H.\ Clemens and A.\ Corti, Translated
  from the 1998 Japanese original.
  \href{https://doi.org/10.1017/CBO9780511662560}{DOI:10.1017/CBO9780511662560}.

\bibitem[KO73]{KO73}
Shoshichi Kobayashi and Takushiro Ochiai.
\newblock Characterizations of complex projective spaces and hyperquadrics.
\newblock {\em J. Math. Kyoto Univ.}, 13:31--47, 1973.
\newblock
  \href{https://doi.org/10.1215/kjm/1250523432}{DOI:10.1215/kjm/1250523432}.

\bibitem[Kob87]{Kob87}
Shoshichi Kobayashi.
\newblock {\em Differential geometry of complex vector bundles}, volume~15 of
  {\em Publications of the Mathematical Society of Japan}.
\newblock Iwanami Shoten and Princeton University Press, Princeton, NJ, 1987.
\newblock Kanô Memorial Lectures, 5.

\bibitem[Laz04]{Laz04-II}
Robert Lazarsfeld.
\newblock {\em Positivity in algebraic geometry. {II}}, volume~49 of {\em
  Ergebnisse der Mathematik und ihrer Grenzgebiete. 3. Folge. A Series of
  Modern Surveys in Mathematics [Results in Mathematics and Related Areas. 3rd
  Series. A Series of Modern Surveys in Mathematics]}.
\newblock Springer-Verlag, Berlin, 2004.
\newblock Positivity for vector bundles, and multiplier ideals.
  \href{https://doi.org/10.1007/978-3-642-18808-4}{DOI:10.1007/978-3-642-18808-4}.

\bibitem[Li17]{MR3731324}
Chi Li.
\newblock Yau-{T}ian-{D}onaldson correspondence for {K}-semistable {F}ano
  manifolds.
\newblock {\em J. Reine Angew. Math.}, 733:55--85, 2017.
\newblock
  \href{https://doi.org/10.1515/crelle-2014-0156}{DOI:10.1515/crelle-2014-0156}.
  Preprint \href{https://arxiv.org/abs/1302.6681}{arXiv:math/1302.6681}.

\bibitem[LT09]{MR2542964}
Gustav~I. Lehrer and Donald~E. Taylor.
\newblock {\em Unitary reflection groups}, volume~20 of {\em Australian
  Mathematical Society Lecture Series}.
\newblock Cambridge University Press, Cambridge, 2009.

\bibitem[LT18]{LT18}
Steven Lu and Behrouz Taji.
\newblock A characterization of finite quotients of abelian varieties.
\newblock {\em Int. Math. Res. Not. IMRN}, (1):292--319, 2018.
\newblock \href{https://doi.org/10.1093/imrn/rnw251}{DOI:10.1093/imrn/rnw251}.
  Preprint \href{http://arxiv.org/abs/1410.0063}{arXiv:1410.0063}.

\bibitem[MM83]{MR715648}
Shigefumi Mori and Shigeru Mukai.
\newblock On {F}ano {$3$}-folds with {$B_{2}\geq 2$}.
\newblock In {\em Algebraic varieties and analytic varieties ({T}okyo, 1981)},
  volume~1 of {\em Adv. Stud. Pure Math.}, pages 101--129. North-Holland,
  Amsterdam, 1983.
\newblock
  \href{https://doi.org/10.2969/aspm/00110101}{DOI:10.2969/aspm/00110101}.

\bibitem[MM03]{MR1969009}
Shigefumi Mori and Shigeru Mukai.
\newblock Erratum: ``{C}lassification of {F}ano 3-folds with {$B_2\geq 2$}''
  [{M}anuscripta {M}ath. {\bf 36} (1981/82), no. 2, 147--162; {MR}0641971
  (83f:14032)].
\newblock {\em Manuscripta Math.}, 110(3):407, 2003.
\newblock
  \href{https://doi.org/10.1007/s00229-002-0336-2}{DOI:10.1007/s00229-002-0336-2}.

\bibitem[MM82]{MR641971}
Shigefumi Mori and Shigeru Mukai.
\newblock Classification of {F}ano {$3$}-folds with {$B_{2}\geq 2$}.
\newblock {\em Manuscripta Math.}, 36(2):147--162, 1981/82.
\newblock \href{https://doi.org/10.1007/BF01170131}{DOI:10.1007/BF01170131}.

\bibitem[Mor75]{MR393054}
Shigefumi Mori.
\newblock On a generalization of complete intersections.
\newblock {\em J. Math. Kyoto Univ.}, 15(3):619--646, 1975.
\newblock
  \href{https://doi.org/10.1215/kjm/1250523007}{DOI:10.1215/kjm/1250523007}.

\bibitem[Nak04]{Nakayama04}
Noboru Nakayama.
\newblock {\em Zariski-decomposition and abundance}, volume~14 of {\em MSJ
  Memoirs}.
\newblock Mathematical Society of Japan, Tokyo, 2004.
\newblock
  \href{https://doi.org/10.2969/msjmemoirs/014010000}{DOI:10.2969/msjmemoirs/014010000}.

\bibitem[Nam97]{MR1489117}
Yoshinori Namikawa.
\newblock Smoothing {F}ano {$3$}-folds.
\newblock {\em J. Algebraic Geom.}, 6(2):307--324, 1997.

\bibitem[Oda13]{MR3010808}
Yuji Odaka.
\newblock The {GIT} stability of polarized varieties via discrepancy.
\newblock {\em Ann. of Math. (2)}, 177(2):645--661, 2013.
\newblock
  \href{https://doi.org/10.4007/annals.2013.177.2.6}{DOI:10.4007/annals.2013.177.2.6}.
  Preprint \href{https://arxiv.org/abs/0807.1716}{arXiv:0807.1716}.

\bibitem[Pri67]{MR210944}
David Prill.
\newblock Local classification of quotients of complex manifolds by
  discontinuous groups.
\newblock {\em Duke Math. J.}, 34:375--386, 1967.
\newblock
  \href{https://doi.org/10.1215/S0012-7094-67-03441-2}{DOI:10.1215/S0012-7094-67-03441-2}.

\bibitem[Pro05]{MR2141325}
Yuri~G. Prokhorov.
\newblock The degree of {F}ano threefolds with canonical {G}orenstein
  singularities.
\newblock {\em Mat. Sb.}, 196(1):81--122, 2005.
\newblock
  \href{https://doi.org/10.1070/SM2005v196n01ABEH000873}{DOI:10.1070/SM2005v196n01ABEH000873}.

\bibitem[PW95]{PW93}
Thomas Peternell and Jarosław~A. Wiśniewski.
\newblock On stability of tangent bundles of {F}ano manifolds with {$b_2=1$}.
\newblock {\em J. Algebraic Geom.}, 4(2):363--384, 1995.

\bibitem[Rei87]{Reid87}
Miles Reid.
\newblock Young person's guide to canonical singularities.
\newblock In {\em Algebraic geometry, Bowdoin, 1985 (Brunswick, Maine, 1985)},
  volume~46 of {\em Proc. Sympos. Pure Math.}, pages 345--414. Amer. Math.
  Soc., Providence, RI, 1987.

\bibitem[Sem92]{MR1165352}
Stephen Semmes.
\newblock Complex {M}onge-{A}mpère and symplectic manifolds.
\newblock {\em Amer. J. Math.}, 114(3):495--550, 1992.
\newblock \href{https://doi.org/10.2307/2374768}{DOI:10.2307/2374768}.

\bibitem[Sta17]{285155}
Jason Starr.
\newblock Volume of {$-K_X$} for a weighted projective variety.
\newblock MathOverflow, 2017.
\newblock \href{https://mathoverflow.net/q/285155}{mathoverflow.net/q/285155
  (version: 2017-11-03)}.

\bibitem[Tak00]{TakayamaSimpleConnectedness}
Shigeharu Takayama.
\newblock Simple connectedness of weak {F}ano varieties.
\newblock {\em J. Algebraic Geom.}, 9(2):403--407, 2000.

\bibitem[Tia92]{MR1163733}
Gang Tian.
\newblock On stability of the tangent bundles of {F}ano varieties.
\newblock {\em Internat. J. Math.}, 3(3):401--413, 1992.
\newblock
  \href{https://doi.org/10.1142/S0129167X92000175}{DOI:10.1142/S0129167X92000175}.

\bibitem[Tia96]{MR1603624}
Gang Tian.
\newblock Kähler-{E}instein metrics on algebraic manifolds.
\newblock In {\em Transcendental methods in algebraic geometry ({C}etraro,
  1994)}, volume 1646 of {\em Lecture Notes in Math.}, pages 143--185.
  Springer, Berlin, 1996.
\newblock \href{https://doi.org/10.1007/BFb0094304}{DOI:10.1007/BFb0094304}.

\bibitem[Wah83]{Wah83}
Jonathan Wahl.
\newblock {A cohomological characterization of $\mathbb P_n$}.
\newblock {\em Invent. Math.}, 72(4):315--322, 1983.
\newblock \href{https://doi.org/10.1007/BF01389326}{DOI:10.1007/BF01389326}.

\bibitem[Weh73]{MR0335656}
Bertram A.~F. Wehrfritz.
\newblock {\em Infinite linear groups. {A}n account of the group-theoretic
  properties of infinite groups of matrices}.
\newblock Springer-Verlag, New York, 1973.
\newblock Ergebnisse der Mathematik und ihrer Grenzgebiete, Band 76.
  \href{https://doi.org/10.1007/978-3-642-87081-1}{DOI:10.1007/978-3-642-87081-1}.

\bibitem[Xu20]{Xu-K-stabilityNotes}
Chenyang Xu.
\newblock K-stability of {F}ano varieties: an algebro-geometric approach.
\newblock Preprint
  \href{https://arxiv.org/pdf/2011.10477.pdf}{arXiv:2011.10477}, November 2020.

\bibitem[YJ12]{MR2912485}
JianMing Yu and GuangFeng Jiang.
\newblock Reducibility of finite reflection groups.
\newblock {\em Sci. China Math.}, 55(5):947--948, 2012.
\newblock
  \href{https://doi.org/10.1007/s11425-011-4341-3}{DOI:10.1007/s11425-011-4341-3}.

\end{thebibliography}
\end{document}